\newcommand{\nc}{\newcommand}
\newcommand{\argmin}{\operatornamewithlimits{argmin}}
\nc{\nt}{\newtheorem}
\def\ep{\varepsilon}
\def\B{{\cal B}}
\nc{\ip}[2]{\mbox{$\langle #1,#2 \rangle$}}
\nc{\pf}{\noindent{\bf Proof\ \ }}
\nc{\finpf}{\hfill{$\Box$}\linespace}
\nc{\linespace}{\vspace{\baselineskip} \noindent}
\nc{\R}{{\bf R}}
\nc{\A}{\mathcal{A}}
\nc{\E}{{\bf E}}
\nc{\tto}{\rightrightarrows}
\nc{\cl}{\mbox{\rm cl}\,}
\nc{\cls}{ \mbox{{\scriptsize {\rm cl}}}\,}
\nc{\conv}{\mbox{\rm conv}\,}
\nc{\aff}{\mbox{\rm aff}\,}
\nc{\rb}{\mbox{\rm rb}\,}
\nc{\ri}{\mbox{\rm ri}\,}
\nc{\inter}{\mbox{\rm int}\,}
\nc{\kernel}{\mbox{\rm ker}\,}
\nc{\bd}{\mbox{\rm bd}\,}
\nc{\sign}{\mbox{\rm sign}\,}
\nc{\Diag}{\mbox{\rm Diag}\,}
\nc{\rint}{\mbox{\rm rint}\,}
\nc{\epi}{\mbox{\rm epi}\,}
\nc{\supp}{\mbox{\rm supp}\,}
\nc{\gph}{\mbox{\rm gph}\,}
\nc{\rge}{\mbox{\rm rge}\,}
\nc{\rgel}{\mbox{\rm {\scriptsize rge}}\,}
\nc{\sepi}{\mbox{\rm {\scriptsize epi}}\,}
\nc{\sbd}{\mbox{\rm {\scriptsize bd}}\,}
\nc{\dom}{\mbox{\rm dom}\,}
\nc{\lin}{\mbox{\rm lin}\,}
\nc{\detr}{\mbox{\rm det}\,}
\nc{\para}{\mbox{\rm par}\,}
\nc{\crit}{\mbox{\rm crit}\,}
\nc{\spann}{\mbox{\rm span}\,}
\nc{\cone}{\mbox{\rm cone}\,}
\nc{\diag}{\mbox{\rm Diag}\,}
\nc{\fix}{\mbox{\rm Fix}}
\nc{\rank}{\mbox{\rm rank}\,}
\nc{\tr}{\mbox{\rm tr}\,}
\nc{\vect}{\mbox{\rm vec}\,}
\newcommand{\argmax}{\operatornamewithlimits{argmax}}
\begin{document}

\title{Extreme point inequalities and geometry of the rank sparsity ball
}


\author{D. Drusvyatskiy         \and
        S.A. Vavasis \and H. Wolkowicz
}


\institute{
D. Drusvyatskiy \\ 
Department of Mathematics, University of Washington, Seattle, WA 98195-4350;\\
              \email{ddrusvya@uwaterloo.ca};
               URL: \url{people.orie.cornell.edu/dd379}.          
            \\
           S.A. Vavasis \\
\email{vavasis@math.uwaterloo.ca};
URL: \url{www.math.uwaterloo.ca/\~vavasis}.
	  \\
	H. Wolkowicz \\
\email{hwolkowicz@uwaterloo.ca};
URL: \url{orion.uwaterloo.ca/\~hwolkowi}.
}

\date{Received: date / Accepted: date \footnote{Department of Combinatorics and Optimization, University of Waterloo, Waterloo, Ontario, Canada N2L 3G1}}

\maketitle

\begin{abstract}
We investigate geometric features of the unit ball corresponding to the sum of the nuclear norm of a matrix and the $l_1$ norm of its entries --- a common penalty function encouraging joint low rank and high sparsity. As a byproduct of this effort, we develop a calculus (or algebra) of faces for general convex functions, yielding a simple and unified approach for deriving inequalities balancing the various  features of the optimization problem at hand, 
at the extreme points of the solution set.

\keywords{Nuclear norm \and compressed sensing \and sparsity \and rank \and exposed face \and convex analysis}
 \subclass{90C25 \and 47N10 \and 68P30}
\end{abstract}

\section{Introduction}
Recovery of a structured signal from a small number of linear measurements has been a hot topic of research in recent years. Notable examples include recovery of sparse vectors \cite{CT, D, CMT}, low-rank matrices \cite{RFP, CR}, and a sum of sparse and low-rank matrices \cite{CLYW,CPW}, to name a few. An overarching theme in this area is to replace a difficult nonconvex objective by a convex surrogate, which usually arises as the convex envelope of the objective on a neighborhood of the origin. For example, one may replace the rank of a vector $x$ by the $l_1$-norm $\|x\|_1$ and the rank of a matrix $X$ by the nuclear norm $\|X\|_*$. In practice, however, it is often the case that the signal that we are attempting to recover is simultaneously structured. In this case, it is common practice to simply use the sum of the convex surrogates to enforce the joint structure. We note in passing that from a compressed sensing point of view, this strategy may be lacking. Oymak et al. \cite{Fazel_neg}  argue that sums of norms (or more general combinations) do not appear to give results stronger than individual norms; we return to
this point in Section~\ref{sec:disc}. Nevertheless, this  is effective and common in practice.

To ground the discussion, suppose that we are interested in finding a matrix satisfying a linear system that simultaneously has low rank and is sparse. This situation arises in a vast number of applications. See for example sparse phase retrieval \cite{PRO,Phase} and cluster detection \cite{sim_sparse, AV}, and references therein. As alluded to above, it is popular to then consider the joint norm  
$$\|X\|_{1,*}:=\|X\|_1 +\theta\|X\|_{*},$$
where $\|X\|_1$ is the $l_1$-norm of the entries of $X$, and
the parameter $\theta >0$ balances the trade off between sparsity and rank. A proximal-point based algorithm for optimizing this norm on an affine subspace has been proposed in \cite{prox_norm}. In contrast to previous research on recovery of jointly structured models, our focus is not set in the context of compressed sensing. Rather we begin by asking a more basic convex analytic question: 
\begin{center}
How does the facial structure of each norm $\|\cdot\|_1$ and $\|\cdot\|_*$ individually influence the facial structure of the unit ball $\B_{1,*}:=\{X:\|X\|_{1,*}\leq 1\}$? 
\end{center}
To adequately address this question it seems that one needs to investigate the trade-off between rank and sparsity --- a topic that to the best of our knowledge has not been explored nearly enough. We hope that this short note will at least begin to rectify this discrepancy. For the sake of readers' intuition, the unit balls corresponding to the three norms mentioned above, restricted to $2\times 2$ symmetric matrices, are illustrated below. 
\begin{figure}[h]
        \centering
        \begin{subfigure}[b]{0.3\textwidth}
                \centering
                \includegraphics[width=\textwidth]{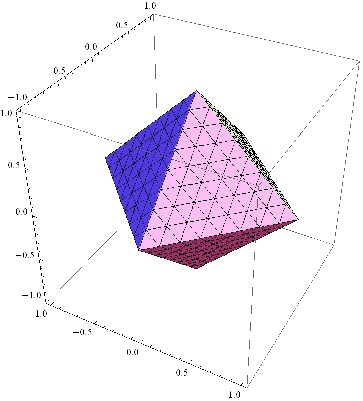}
                \caption{$\|\cdot\|_{1}$-ball}
        \end{subfigure}%
        ~ 
        \begin{subfigure}[b]{0.3\textwidth}
                \centering
                \includegraphics[width=\textwidth]{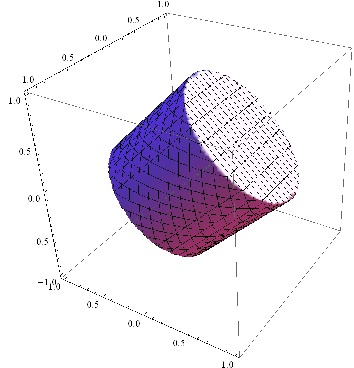}
                \caption{$\|\cdot\|_{*}$-ball}
        \end{subfigure}
        ~ 
        \begin{subfigure}[b]{0.3\textwidth}
                \centering
                \includegraphics[width=\textwidth]{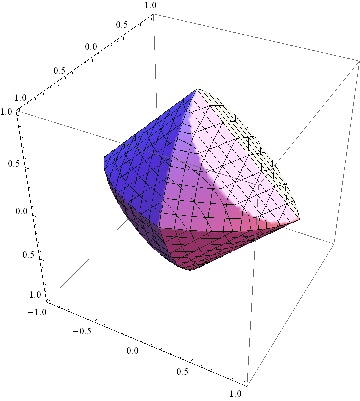}
                \caption{$\|\cdot\|_{1,*}$-ball}
        \end{subfigure}
\end{figure}

To summarize the main results, we will show that any extreme point $X$ of the ball $\B_{1,*}$ satisfies the inequality
\begin{equation}\label{eqn:rank_sparse}
\frac{r(r+1)}{2}- |I|\leq 1,
\end{equation}
where $r$ and $|I|$ are the rank 
and the number of zero entries of $X$, respectively.
Moreover, surprisingly, we will see that all the vertices of the ball 
$\B_{1,*}$  --- points where the normal cone has nonempty interior --- are simply the extreme points of  $\{X: \|X\|_1\leq \frac{1}{2}\}$, that is no ``new'' vertices are created when $\|\cdot\|_1$ and $\|\cdot\|_*$ are summed. The latter depends on an interesting observation made precise in Theorem~\ref{thm:vert}: the set of matrices with a prescribed rank and sparsity pattern is rarely small; such a set contains a naturally occurring smooth submanifold whose size depends only on the connectivity of the adjacency graph. 

These results, in turn, have immediate implications on problems of low rank sparse recovery. Namely, when minimizing the norm $\|X\|_1 +\theta \|X\|_{*}$ over matrices satisfying a linear system $\mathcal{A}(X)=b$, any extreme point $X$ of the solution set
satisfies the inequality
\begin{equation}\label{eqn:rank_sparse_opt}
\frac{r(r+1)}{2}-|I| \leq 1 +d,
\end{equation}
where $d$ is the dimension of the range of $\mathcal{A}$ (i.e. the number of linear measurements). Moreover we prove that the problem of minimizing a linear functional $\langle V, X\rangle$ subject to $\Vert X\Vert_1 + \theta \Vert X\Vert_*\le 1$ will
recover a sparse rank-one matrix for a positive measure subset of matrices $V$, a key result for the work of Doan and Vavasis \cite{DV13} and Doan, Toh and Vavasis \cite{prox_norm}, who use the joint norm $\| \cdot\|_{1,*}$ to find hidden rank-one blocks inside large matrices. 

Those well-versed in the theory of semi-definite programming will see that equations (\ref{eqn:rank_sparse}) and (\ref{eqn:rank_sparse_opt}) are reminiscent of the foundational results of \cite{P_ext,B_ext}, where the authors derive bounds on the rank of extreme points of the feasible regions of SDP's in terms of the number of constraints, and the more general theory for conic linear programs \cite{handbook}. 
The basic ingredient for such results is a theorem of Dubins \cite[Page 116]{SW} stating that a set is a face of an intersection of two convex sets if and only if it is an intersection of two faces. In the current manuscript, we take this idea further by developing a calculus (or algebra) of faces for general convex functions.
Indeed, one of the major successes of modern convex analysis is that sets and functions are put on an equal footing. Our analysis of the facial structure of the rank sparsity ball nicely illustrates how such a calculus can generally yield a simple and transparent way of obtaining inequalities (analogous to (\ref{eqn:rank_sparse}) and (\ref{eqn:rank_sparse_opt})) balancing the various  features of the optimization problem at hand, 
at the extreme points of the solution set. In particular, this technique easily adapts to the sum of many other ``basic'' norms --- a common feature of joint structure recovery.

The outline of the manuscript is as follows. In Section~\ref{sec:face}, we recall some basic tools of convex analysis and record a facial calculus. In Section~\ref{sec:rank_sp}, we study the extreme points and the vertices of the rank sparsity ball, in part using results of the previous section. In Section~\ref{sec:rec}, we prove that the vectors exposing rank one matrices with a fixed sparsity pattern  have nonzero measure.

\section{Faces of convex functions}\label{sec:face}
\subsection{Notation and preliminaries}
We begin by establishing some notation and recalling basic tools of convex analysis. We will in large part follow the notation of \cite{con_ter}.
Throughout, the symbol $\E$ will denote a Euclidean space (finite-dimensional real inner product space) with norm $\|\cdot\|$ and inner-product $\langle \cdot,\cdot \rangle$. The closed ball of radius $\ep >0$ around a point $\bar{x}$ will be denoted by $\B_{\ep}(\bar{x})$, while the closed unit ball will be denoted by ${\bf\B}$. The interior, boundary, and closure of a set $Q\subset\E$ will be written as $\inter Q$, $\bd Q$, and $\cl Q$, respectively.
The linear span, affine span, convex hull, (nonconvex) conical hull, and convex conic hull of $Q$   will be written as $\spann Q$, $\aff Q$, $\conv Q$, $\R_+ Q$, and $\cone Q$, respectively. The interior and boundary of $Q$ relative to its affine span will be denoted by $\ri Q$ and $\rb Q$, respectively.
We will consider functions $f$ on $\E$ taking values in the extended real line $\overline{\R}:=\R\cup\{\pm\infty\}$. We will always assume that such functions are {\em proper}, meaning  they never take the value $-\infty$ and are not identically $+\infty$.
For a function $f\colon\E\to\overline{\R}$, we define the {\em domain},  {\em gph}, and {\em epigraph} of $f$, respectively, to be
\begin{align*}
\dom f &= \{x\in\E: f(x) <\infty\},\\
\gph f &= \{(x,f(x))\in \E\times\R : x\in\dom f\},\\
\epi f &= \{(x,r)\in \E\times\R : f(x)\leq r\}.
\end{align*}
If in addition $Q$ is a subset of $\E$, then the symbol $\gph f\big|_Q$ will simply stand for $(Q\times \R)\cap \gph f$.
The symbol $[f\leq r]$ will denote the sublevel set $\{x\in \E : f(x)\leq r\}$. Analogous notation will be reserved for $[f=r]$.
A function $f\colon\E\to\overline{\R}$ is {\em lower-semicontinuous} (or {\em lsc} for short) if the epigraph $\epi f$ is closed. The {\em subdifferential} of a convex function $f$ at $\bar{x}$ is the set
$$\partial f(\bar{x}):=\{v\in\E: f(x)\geq f(\bar{x})+\langle v,x-\bar{x}\rangle \textrm{ for all }x\in \E\}.$$
The {\em indicator function} of a convex set $Q$, written $\delta_Q$, is defined to be zero on $Q$ and $+\infty$ elsewhere. The {\em normal cone} to $Q$ at a point $\bar{x}\in Q$ is $N_Q(\bar{x}):=\partial \delta_{Q}(\bar{x})$ while the {\em tangent cone} is the polar $T_Q(\bar{x}):=(N_Q(\bar{x}))^{o}$, where for any convex cone $K$ we define $K^{o}:=\{v:\langle x,v\rangle\leq 0\}$. 

With any function $f\colon\E\to\overline{\R}$, we associate the {\em Fenchel conjugate} $f^{*}\colon\E\to\overline{\R}$ by setting $$f^{*}(u):= \sup_{x\in\E}\, \{\langle u,x \rangle - f(x)\} .$$
Whenever $f$ is lsc and convex,  we have $(f^*)^{*}=f$ and $\partial f^{*}= (\partial f)^{-1}$, where we use the convention $(\partial f)^{-1}(u):=\{x: u\in\partial f(x)\}$.
In particular, when $K$ is a closed convex cone, the equations $\delta_K^{*}=\delta_{K^{o}}$ and $N_{K^{o}}=(N_K)^{-1}$ hold.

\subsection{Faces of functions}\label{subsec:face_gen}
Consider a convex set $Q\subset\E$. Classically, a {\em face} of $Q$ is a convex subset $F$ of $Q$ such that every closed segment in $Q$ whose relative interior intersects $F$ must lie fully in $F$. A face $F$ of $Q$ is a {\em minimal face} at $\bar{x}$ if for any other face $F'$ containing $\bar{x}$, the inclusion $F\subset F'$ holds. Equivalently, the minimal face of $Q$ at $\bar{x}$ is the unique face of $Q$ containing $\bar{x}$ in its relative interior.

In the current work, we will need to consider faces of epigraphs of functions. Therefore to ease notation and make the language more transparent, we extend the notion of a face to the functional setting by means of epigraphical geometry.

\begin{defn}[Faces of functions]\label{defn:face_gen}\hfill \\
{\rm 
Consider an lsc, convex function $f\colon\E\to\overline{\R}$. 
Then a set $F\subset\E$ is a {\em face} of $f$ whenever $\gph f\big|_F$ is a face of $\epi f$. A face $F$ is {\em minimal} at a point $\bar{x}\in F$ if for any other face $F'$ of $f$ containing $\bar{x}$, the inclusion $F\subset F'$ holds.}
\end{defn}
{\em Extreme points} and {\em extreme rays} of functions are simply the points and rays that happen to be faces. It is important to note that not all faces of the epigraph yield faces of the function, since such faces may contain points above the graph. 
The following simple lemma illuminates this situation.
\begin{lem}[Faces of epigraphs]\label{lem:prep} \hfill \\
Consider an lsc, convex function $f\colon\E\to\overline{\R}$. Then a face $\widehat{F}$ of the epigraph $\epi f$ contains a point $(\bar{x},r)$ with $r > f(\bar{x})$ if and only if the recession cone of $\widehat{F}$ contains the ray $\{0\}\times\R_+.$
Consequently if $\widehat{F}$ is a minimal face of $\epi f$ at  a pair $(\bar{x},f(\bar{x}))$, then $\widehat{F}$ coincides with $\gph f\big|_F$ for some set $F$ in $\E$.
\end{lem}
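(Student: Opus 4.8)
The plan is to establish the stated equivalence first and then read off the concluding assertion as a quick consequence. The easy half is the implication from right to left: if the recession cone of $\widehat{F}$ contains $\{0\}\times\R_+$, then choosing any $(y,s)\in\widehat{F}\subseteq\epi f$ (so that $s\ge f(y)$) we get $(y,s+1)\in\widehat{F}$, and since $s+1>s\ge f(y)$ this is a point of $\widehat{F}$ lying strictly above the graph. So essentially no work is needed in this direction.

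For the forward implication I would feed a vertical segment into the defining property of a face. Suppose $(\bar{x},r)\in\widehat{F}$ with $r>f(\bar{x})$. For any $r'>f(\bar{x})$ the whole vertical segment $[(\bar{x},f(\bar{x})),(\bar{x},r')]$ lies in $\epi f$, and as soon as $f(\bar{x})<r<r'$ its relative interior contains the point $(\bar{x},r)\in\widehat{F}$. The face property then forces this entire segment into $\widehat{F}$, and letting $r'\to\infty$ shows that the whole upward ray $\{\bar{x}\}\times[f(\bar{x}),\infty)$ is contained in $\widehat{F}$. Thus $\widehat{F}$ contains a full ray in the direction $(0,1)$, from which $(0,1)$, and hence all of $\{0\}\times\R_+$, is a recession direction.

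The step I expect to require the most care is precisely this last one. A face of the closed convex set $\epi f$ need not itself be closed, so one cannot blindly invoke the usual ``a ray in the set implies a recession direction'' shortcut, which is normally stated for closed sets. I would argue the recession statement directly from the established fact that $\widehat{F}$ contains an entire vertical ray, working inside the closed ambient set $\epi f$; I anticipate this to be bookkeeping rather than a genuine difficulty, but it is the one place where the non-closedness of faces has to be handled honestly.

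Finally, for the concluding assertion, let $\widehat{F}$ be a minimal face at $(\bar{x},f(\bar{x}))$ and suppose toward a contradiction that it contained some $(z,r)$ with $r>f(z)$. By the equivalence just proved, $(0,1)$ is a recession direction of $\widehat{F}$; in particular $(0,1)$ lies in the parallel subspace $\aff\widehat{F}-\aff\widehat{F}$. Since $\widehat{F}$ is minimal at $(\bar{x},f(\bar{x}))$, this pair lies in $\ri\widehat{F}$, so perturbing it a small amount in the admissible direction $-(0,1)$ keeps it in $\widehat{F}$; that is, $(\bar{x},f(\bar{x})-\varepsilon)\in\widehat{F}\subseteq\epi f$ for some $\varepsilon>0$, which is absurd since it sits below the graph. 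Hence $\widehat{F}$ contains no point strictly above the graph, so $\widehat{F}\subseteq\gph f$, and taking $F$ to be the image of $\widehat{F}$ under the projection onto $\E$ yields $\widehat{F}=\gph f\big|_F$, as required.
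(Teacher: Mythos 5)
Your proposal is correct and takes essentially the same route as the paper: the paper's entire converse argument is your vertical-segment step (the face property forces the ray $\{(\bar{x},\alpha):\alpha\geq f(\bar{x})\}$ into $\widehat{F}$), and your recession-cone and relative-interior arguments simply fill in the paper's ``the result follows,'' including the minimal-face consequence. One small remark: your worry about non-closedness is unfounded, since any face of a closed convex set is itself closed (a standard fact, provable by the same segment-extension trick you use), so the usual ``ray implies recession direction'' result applies directly --- though your plan to argue it by hand inside $\epi f$ also works.
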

\begin{proof}
The implication $\Leftarrow$ is immediate.
To see the converse, let $\widehat{F}$ be a face of $\epi f$ containing a point $(\bar{x},r)$ with $r > f(\bar{x})$. Then the defining property of a face implies that $\widehat{F}$ contains the ray $\{(\bar{x},\alpha): \alpha \geq f(\bar{x})\}$. The result follows.\qed
\end{proof}

Consider an lsc, convex function $f\colon\E\to\overline{\R}$. A number of properties of faces are now immediate from the previous lemma.  To illustrate, any face of $f$ is a closed convex set and any point in the domain of $f$ is contained in some face of $f$.
Moreover, the following are equivalent for any face $F$ of $f$.
\begin{itemize}
\item $F$ is a minimal face of $f$ at $\bar{x}$,
\item $\gph f\big|_F$ is a minimal face of $\epi f$ at $(\bar{x},f(\bar{x}))$,
\item $\bar{x}$ lies in the relative interior of $F$.
\end{itemize}

The key to a facial calculus is a chain rule for a composition of a convex function and a linear mapping. We will establish this rule by bootstrapping the following result, describing faces of a preimage of  a convex set under a linear mapping \cite{handbook}.
\begin{thm}[Faces of preimages of sets]\label{thm: pre_set} \hfill \\
Consider a linear operator $\mathcal{A}\colon\E\to{\bf H}$ and a closed convex set $Q\subset{\bf H}$. Then
$F$ is a face of the preimage $\mathcal{A}^{-1}(Q)$ if and only if $F$ has the form $\mathcal{A}^{-1}(M)$ for some face $M$ of $Q$. Moreover, if $F$ is a face of $\mathcal{A}^{-1}(Q)$, then it can be written as $\mathcal{A}^{-1}(M)$, where $M$ is the minimal face of $Q$ containing $\A(F)$.
\end{thm}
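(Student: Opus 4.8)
The plan is to prove the two implications separately, with the forward direction resting on the reverse one. For the reverse implication ($\Leftarrow$), I would take a face $M$ of $Q$ and verify directly from the definition that $F:=\mathcal{A}^{-1}(M)$ is a face of $\mathcal{A}^{-1}(Q)$. This set is convex (being the preimage of a convex set) and contained in $\mathcal{A}^{-1}(Q)$. Given any segment $[x_0,x_1]\subset\mathcal{A}^{-1}(Q)$ whose relative interior meets $F$, applying $\mathcal{A}$ yields a segment $[\mathcal{A}x_0,\mathcal{A}x_1]\subset Q$ whose relative interior meets $M$ (linearity sends the intersecting interior point into $M$); since $M$ is a face, the whole image segment lies in $M$, so $\mathcal{A}x_0,\mathcal{A}x_1\in M$ and hence $x_0,x_1\in F$. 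Convexity of $F$ then gives $[x_0,x_1]\subset F$. The only care needed is the degenerate case $\mathcal{A}x_0=\mathcal{A}x_1$, which is immediate.

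For the forward implication ($\Rightarrow$), let $F$ be a nonempty face of $P:=\mathcal{A}^{-1}(Q)$ (the empty face maps to the empty face, so that case is trivial), and fix a point $\bar{x}\in\ri F$. I would define $M$ to be the minimal face of $Q$ containing $\mathcal{A}(\bar{x})$, i.e.\ the unique face of $Q$ having $\mathcal{A}(\bar{x})$ in its relative interior. Because $\mathcal{A}$ carries relative interiors to relative interiors, $\mathcal{A}(\bar{x})\in\ri\mathcal{A}(F)$, and a short argument shows that the minimal face of $Q$ at a relative interior point of the convex set $\mathcal{A}(F)$ is exactly the minimal face of $Q$ containing $\mathcal{A}(F)$; this identification supplies the \emph{moreover} clause. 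By the already-proved reverse implication, $\mathcal{A}^{-1}(M)$ is a face of $P$, and clearly $\bar{x}\in\mathcal{A}^{-1}(M)$.

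It remains to prove $F=\mathcal{A}^{-1}(M)$, and here is the crux of the argument: I claim $\bar{x}\in\ri\big(\mathcal{A}^{-1}(M)\big)$. Granting this, both $F$ and $\mathcal{A}^{-1}(M)$ are faces of $P$ containing $\bar{x}$ in their relative interiors, so each is the minimal face of $P$ at $\bar{x}$; uniqueness of the minimal face at a point then forces $F=\mathcal{A}^{-1}(M)$. To establish the claim I would use the commutation of relative interior with linear preimage: since $\mathcal{A}(\bar{x})\in\ri M$, the qualification $\mathcal{A}^{-1}(\ri M)\neq\emptyset$ holds, whence $\ri\big(\mathcal{A}^{-1}(M)\big)=\mathcal{A}^{-1}(\ri M)\ni\bar{x}$. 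Alternatively, to keep the proof self-contained, I would argue directly by the line-segment principle: given any $y\in\mathcal{A}^{-1}(M)$, prolong the segment $[\mathcal{A}y,\mathcal{A}(\bar{x})]$ slightly beyond $\mathcal{A}(\bar{x})$ inside $M$ (possible as $\mathcal{A}(\bar{x})\in\ri M$), pull the new endpoint back through $\mathcal{A}$, and thereby exhibit $z\in\mathcal{A}^{-1}(M)$ with $\bar{x}$ in the open segment $(y,z)$; this is precisely the relative-interior prolongation condition.

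The main obstacle I anticipate is exactly this last step, namely that relative interior need not commute with linear preimage without a qualification condition. The point $\bar{x}\in\ri F$ guarantees $\mathcal{A}(\bar{x})\in\ri M$, which is what makes the qualification $\mathcal{A}^{-1}(\ri M)\neq\emptyset$ automatic; without first passing to such a relative-interior point the argument would break down. The remaining bookkeeping --- the degenerate segment in the reverse direction, the empty-face case, and the elementary verification that the minimal face containing a convex set coincides with the minimal face at any of its relative interior points --- is routine.
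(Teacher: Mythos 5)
Your proposal is correct, but there is nothing in the paper to compare it against: the paper states Theorem~\ref{thm: pre_set} as a known result imported from the conic linear programming literature \cite{handbook} and gives no proof of it --- its own work begins with the chain rule (Theorem~\ref{thm:chain_gen}), which it bootstraps from this statement. Your argument is therefore a legitimate self-contained substitute rather than a variant of an existing proof. The reverse implication is the routine face verification, and you correctly isolate the only degenerate case $\mathcal{A}x_0=\mathcal{A}x_1$. The forward implication is also sound: choosing $\bar{x}\in\ri F$ and taking $M$ to be the face of $Q$ with $\mathcal{A}(\bar{x})\in\ri M$, the identity $\ri\big(\mathcal{A}^{-1}(M)\big)=\mathcal{A}^{-1}(\ri M)$ applies because the needed qualification $\mathcal{A}^{-1}(\ri M)\neq\emptyset$ is witnessed by $\bar{x}$ itself; then $F$ and $\mathcal{A}^{-1}(M)$ are two faces of $\mathcal{A}^{-1}(Q)$ containing $\bar{x}$ in their relative interiors, hence equal by the uniqueness of such a face, a fact recorded at the start of Section~\ref{subsec:face_gen}. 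The ``moreover'' clause also drops out of your construction, since $\mathcal{A}(\ri F)=\ri \mathcal{A}(F)$ makes $\mathcal{A}(\bar{x})$ a relative interior point of the convex set $\mathcal{A}(F)$, and the minimal face of $Q$ at such a point is precisely the minimal face of $Q$ containing $\mathcal{A}(F)$ (the one-line prolongation argument you allude to). Your alternative, explicitly prolonging segments through $\bar{x}$ inside $\mathcal{A}^{-1}(M)$, is equally valid and has the merit of avoiding Rockafellar's relative-interior calculus entirely, which is in the spirit of the elementary arguments the paper gives in its appendix for the exposed-face analogues (where, as the paper stresses, qualification conditions genuinely matter, in contrast to the qualification-free statement here).
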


The chain rule, a central result of this subsection, now easily follows. 
\begin{thm}[Faces of a composition]\label{thm:chain_gen}
Consider an lsc, convex function $f\colon{\bf H}\to\overline{\R}$ and a linear operator $\mathcal{A}\colon\E\to{\bf H}$. Then $M$ is a face of $f\circ\mathcal{A}$ if and only if $M$ has the form $\mathcal{A}^{-1}(F)$ for some face $F$ of $f$.
\end{thm}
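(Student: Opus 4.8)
The plan is to lift everything to the level of epigraphs and invoke Theorem~\ref{thm: pre_set}. I introduce the linear operator $\widehat{\A}\colon\E\times\R\to{\bf H}\times\R$ defined by $\widehat{\A}(x,t)=(\A x,t)$. The crucial observation is that the epigraph of the composition is a preimage, $\epi(f\circ\A)=\widehat{\A}^{-1}(\epi f)$, so that the faces of $\epi(f\circ\A)$ are completely governed by Theorem~\ref{thm: pre_set}. Alongside this I would record the elementary identity
$$\widehat{\A}^{-1}\big(\gph f\big|_F\big)=\gph(f\circ\A)\big|_{\A^{-1}(F)} \qquad\text{for every } F\subseteq{\bf H},$$
which follows by unwinding the definitions: a pair $(x,t)$ lies in the left-hand side exactly when $\A x\in F$ and $t=f(\A x)$, i.e. when $x\in\A^{-1}(F)$ and $t=(f\circ\A)(x)$.

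For the implication $\Leftarrow$, suppose $F$ is a face of $f$, so that $\gph f|_F$ is a face of $\epi f$. By Theorem~\ref{thm: pre_set} its preimage $\widehat{\A}^{-1}(\gph f|_F)$ is a face of $\widehat{\A}^{-1}(\epi f)=\epi(f\circ\A)$, and by the identity above this preimage equals $\gph(f\circ\A)|_{\A^{-1}(F)}$. Hence $\A^{-1}(F)$ is a face of $f\circ\A$, as desired.

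For the converse, let $M$ be a face of $f\circ\A$, which by definition means $\gph(f\circ\A)|_M$ is a face of $\epi(f\circ\A)=\widehat{\A}^{-1}(\epi f)$. Theorem~\ref{thm: pre_set} then writes it as $\widehat{\A}^{-1}(\widehat{F})$, where $\widehat{F}$ is the minimal face of $\epi f$ containing the image set $S:=\widehat{\A}\big(\gph(f\circ\A)|_M\big)$. The one subtle point, and the step I expect to be the \emph{main obstacle}, is to argue that this $\widehat{F}$ is of graph type, that is $\widehat{F}=\gph f|_F$ for an honest face $F$ of $f$; a priori a face of $\epi f$ could contain points lying strictly above the graph, in which case it would not arise from any face of $f$. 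This is precisely where I would exploit that $M$ is a face rather than an arbitrary set: since $\gph(f\circ\A)|_M$ is convex, so is its linear image $S$, and by construction $S\subseteq\gph f$. Consequently any relative interior point $z\in\ri S$ is itself a genuine graph point $(\bar y,f(\bar y))$. Because $z\in\ri S$, the minimal face of $\epi f$ containing the convex set $S$ coincides with the minimal face of $\epi f$ at $z$, and the concluding statement of Lemma~\ref{lem:prep} then guarantees that this minimal face equals $\gph f|_F$, where $F$ is the minimal face of $f$ at $\bar y$. Feeding $\widehat{F}=\gph f|_F$ back through the identity above yields $\gph(f\circ\A)|_M=\gph(f\circ\A)|_{\A^{-1}(F)}$, whence $M=\A^{-1}(F)$, completing the argument. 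The degenerate case $S=\emptyset$, corresponding to $M=\emptyset=\A^{-1}(\emptyset)$, is trivial.
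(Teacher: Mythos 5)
Your proof is correct and takes essentially the same route as the paper: both lift to epigraphs via $\widehat{\A}(x,t)=(\A x,t)$, note $\epi(f\circ\A)=\widehat{\A}^{-1}(\epi f)$, apply Theorem~\ref{thm: pre_set} in both directions, and use Lemma~\ref{lem:prep} to show that the minimal face of $\epi f$ containing the image of $\gph (f\circ\A)\big|_M$ is of graph type. Your argument at that last step (a relative interior point of the convex set $S\subset\gph f$ is a graph point, so the minimal face containing $S$ is the minimal face at that point) is simply a more detailed justification of the paper's remark that $\ri\widehat{F}$ intersects $\gph f$.
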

\begin{proof}
Observe we have the representation $\epi (f\circ A)=\{(x,r):(\mathcal{A}(x),r)\in\epi f\}$, or equivalently 
$$\epi (f\circ \A)=\widehat{\A}^{-1}(\epi f),$$
for the linear mapping $\widehat{\A}(x,r):=(\mathcal{A}(x),r)$. The proof will consist of adapting Theorem~\ref{thm: pre_set} to this setting. To this end,
let $F$ be a face of $f$ and define $\widehat{F}:=\gph f\big|_F$, which is by definition a face of $\epi f$. Using Theorem~\ref{thm: pre_set}, we immediately deduce that $\widehat{\A}^{-1}(\widehat{F})$ is a face of $\epi (f\circ \A)$.
On the other hand, observe $\widehat{\A}^{-1}(\widehat{F})= \gph (f\circ \A)\big|_{\A^{-1}(F)}$. Hence $\mathcal{A}^{-1}(F)$ is a face of $f\circ\A$. Conversely, let $M$ be a face of $f\circ\A$ and define $\widehat{M}:=\gph (f\circ\A)\big|_M$, which is by definition a face of $\epi (f\circ\A)$.
Let $\widehat{F}$ be the minimal face of $\epi f$ containing $\widehat{\A}(\widehat{M})= \gph f\big|_{\A(M)}$.  By Theorem~\ref{thm: pre_set}, we have the equality $\widehat{M}=\widehat{\mathcal{A}}^{-1}(\widehat{F})$. On the other hand, since $\ri \widehat{F}$ clearly intersects $\gph f$, we deduce by Lemma~\ref{lem:prep} that we can write $\widehat{F}=\gph f\big|_F$ for some face $F$ of $f$. Consequently we obtain $\widehat{M}=\gph (f\circ{\A})\big|_{\A^{-1}(F)}$ and conclude $M=\A^{-1}(F)$, as claimed.
\qed
\end{proof}

A sum rule is immediate.
\begin{cor}[Faces of a sum]\label{cor: sum_gen}
Consider lsc, convex functions $f_1\colon\E\to\overline{\R}$ and $f_2\colon\E\to\overline{\R}$.
Then $F$ is a face of the sum $f_1+f_2$ if and only if $F$ coincides with $F_1\cap F_2$ for some faces $F_1$ of $f_1$ and $F_2$ of $f_2$.
\end{cor}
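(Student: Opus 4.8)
The plan is to deduce this as a genuine corollary of the chain rule (Theorem~\ref{thm:chain_gen}) by writing the sum as a composition. Introduce the \emph{separable sum} $(f_1\oplus f_2)\colon \E\times\E\to\overline{\R}$, defined by $(f_1\oplus f_2)(x_1,x_2):=f_1(x_1)+f_2(x_2)$, together with the diagonal embedding $\Delta\colon\E\to\E\times\E$ given by $\Delta(x):=(x,x)$. These yield the factorization $f_1+f_2=(f_1\oplus f_2)\circ\Delta$. Applying Theorem~\ref{thm:chain_gen} to this composition, $F$ is a face of $f_1+f_2$ if and only if $F=\Delta^{-1}(G)$ for some face $G$ of $f_1\oplus f_2$.

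Everything then hinges on identifying the faces of the separable sum. I claim $G$ is a face of $f_1\oplus f_2$ precisely when $G=F_1\times F_2$ for some faces $F_i$ of $f_i$. Granting this, the corollary is immediate: since $\Delta^{-1}(F_1\times F_2)=\{x:(x,x)\in F_1\times F_2\}=F_1\cap F_2$, the preimages arising above are exactly the intersections $F_1\cap F_2$.

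The work, and the main obstacle, lies in this product description, and I expect it to be the delicate part; note that $\epi(f_1\oplus f_2)$ is not literally the product $\epi f_1\times\epi f_2$, so one cannot simply quote that faces of a product are products of faces. I would argue directly from the segment definition. For the direction $\Leftarrow$, take a closed segment in $\epi(f_1\oplus f_2)$ whose relative interior meets $\gph(f_1\oplus f_2)\big|_{F_1\times F_2}$; summing the componentwise convexity estimates and comparing with the epigraph constraints forces every inequality to be an equality. Consequently each $f_i$ agrees with its chord at a relative-interior point of the corresponding projected segment, hence is affine along it, and the face property of $F_i$ places both endpoints in $\gph f_i\big|_{F_i}$; thus the whole segment lies in $\gph(f_1\oplus f_2)\big|_{F_1\times F_2}$. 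For the direction $\Rightarrow$, I would reduce to minimal faces: choosing $(\bar x_1,\bar x_2)\in\ri G$ and letting $F_i$ be the minimal face of $f_i$ at $\bar x_i$, the identity $\ri(F_1\times F_2)=\ri F_1\times\ri F_2$ shows $(\bar x_1,\bar x_2)\in\ri(F_1\times F_2)$; since $F_1\times F_2$ is already a face by the $\Leftarrow$ direction, uniqueness of the minimal face at $(\bar x_1,\bar x_2)$ forces $G=F_1\times F_2$.

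Alternatively, one can bypass the separable sum and prove the two inclusions of the corollary head on. The same segment argument shows directly that $F_1\cap F_2$ is a face of $f_1+f_2$, and for the converse one takes $\bar x\in\ri F$, sets $F_i$ to be the minimal face of $f_i$ at $\bar x$, and invokes $\ri(F_1\cap F_2)=\ri F_1\cap\ri F_2$ (valid since $\bar x$ lies in both relative interiors) together with uniqueness of minimal faces to conclude $F=F_1\cap F_2$. Either route localizes all the difficulty to two standard facts: the passage from a pointwise chord equality to affinity of each $f_i$ along the segment, and the relative-interior bookkeeping for intersections and products of convex sets.
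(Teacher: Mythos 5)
Your first route is precisely the paper's proof: the paper's entire argument reads ``Apply Theorem~\ref{thm:chain_gen} to the linear mapping $\A (x)=(x,x)$ and to the function $g(x,y)=f_1(x)+f_2(y)$,'' which is exactly your factorization through the diagonal embedding and separable sum. Your proposal is correct and additionally supplies the step the paper leaves tacit --- that the faces of $g$ are exactly the products $F_1\times F_2$ of faces of $f_1$ and $f_2$, proved via the segment argument and uniqueness of minimal faces --- as well as a correct direct alternative that bypasses the chain rule entirely.
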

\begin{proof}
Apply Theorem~\ref{thm:chain_gen} to the linear mapping $\A (x)=(x,x)$ and to the function $g(x,y)=f_1(x)+f_2(y)$.\qed
\end{proof}

We now come back full circle and establish a tight connection between faces of functions and faces of their sublevel sets.
\begin{cor}[Faces of sublevel sets]\label{cor:face_sublevel}\hfill \\
Consider a continuous, convex function $f\colon\E\to\overline{\R}$ and let $r$ be a real number which is not a minimal value of $f$. Then equality
$$\bd [f\leq r] = [f=r] \qquad\textrm{ holds},$$ and moreover
$F$ is a proper face of the sublevel set $[f\leq r]$ if and only if $F$ coincides with $M\cap [f=r]$ for some face $M$ of $f$.
\end{cor}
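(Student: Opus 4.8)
The plan is to prove the two assertions in turn, using the first to control the second. Throughout, write $\iota\colon\E\to\E\times\R$ for the affine inclusion $\iota(x)=(x,r)$, and observe the basic identity $[f\leq r]=\iota^{-1}(\epi f)$.

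For the boundary equality, I would first show that $\inter [f\leq r]=[f<r]$. Since $f$ is continuous, the set $[f<r]$ is open and contained in $[f\leq r]$, giving one inclusion. For the reverse, suppose toward a contradiction that $\bar{x}\in\inter[f\leq r]$ with $f(\bar{x})=r$. Because $r$ is not a minimal value, there is a point $x_0$ with $f(x_0)<r$; as $\bar{x}$ is interior I may push slightly past it along the ray from $x_0$, obtaining $y:=\bar{x}+\ep(\bar{x}-x_0)\in[f\leq r]$ for small $\ep>0$. Then $\bar{x}$ is the convex combination $\tfrac{1}{1+\ep}y+\tfrac{\ep}{1+\ep}x_0$, so convexity forces $f(\bar{x})<r$, a contradiction. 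Hence $\inter[f\leq r]=[f<r]$, and since $[f\leq r]$ is closed, $\bd[f\leq r]=[f\leq r]\setminus[f<r]=[f=r]$. I would also record that $[f<r]\neq\emptyset$ makes $[f\leq r]$ full dimensional, so its relative and topological boundaries coincide and every proper face lies in $[f=r]$.

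For the facial characterization, the idea is to transport faces across $\iota$ using Theorem~\ref{thm: pre_set}. Although that theorem is stated for a linear map, translating the target by $-(0,r)$ turns $\iota$ into the linear inclusion $x\mapsto(x,0)$, and since faces and preimages are insensitive to translation the theorem applies verbatim. For the easy direction, if $M$ is a face of $f$ then $\gph f\big|_M$ is a face of $\epi f$ by definition, so Theorem~\ref{thm: pre_set} makes $\iota^{-1}(\gph f\big|_M)=M\cap[f=r]$ a face of $[f\leq r]$; it is proper because it sits inside $[f=r]$, a proper subset of $[f\leq r]$ by the first part. Conversely, let $F$ be a proper face. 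By the first part $F\subseteq[f=r]$, so $f\equiv r$ on $F$ and $\iota(F)\subseteq\gph f$. Choose $\bar{x}\in\ri F$ and let $\widehat{M}$ be the minimal face of $\epi f$ at the graph point $(\bar{x},r)$. A standard relative-interior argument (a face meeting the relative interior of a convex subset must contain that subset) identifies $\widehat{M}$ with the minimal face of $\epi f$ containing $\iota(F)$, so the ``moreover'' clause of Theorem~\ref{thm: pre_set} yields $F=\iota^{-1}(\widehat{M})$. Finally Lemma~\ref{lem:prep} gives $\widehat{M}=\gph f\big|_M$ for some face $M$ of $f$, whence $F=\iota^{-1}(\gph f\big|_M)=M\cap[f=r]$, as desired.

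The hard part is the converse direction, and specifically guaranteeing that the epigraph face produced by Theorem~\ref{thm: pre_set} is a genuine \emph{graph} face rather than one containing vertical rays $\{0\}\times\R_+$; only graph faces correspond to faces of $f$. This is exactly where Lemma~\ref{lem:prep} does the work, and why I apply it at the graph point $(\bar{x},r)$ with $\bar{x}\in\ri F$: minimality at a point actually on the graph excludes the vertical recession direction. The remaining subtlety is purely bookkeeping, namely matching ``minimal face of $\epi f$ at $(\bar{x},r)$'' with ``minimal face of $\epi f$ containing $\iota(F)$'', which follows from the choice $\bar{x}\in\ri F$.
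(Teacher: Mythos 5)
Your proof is correct and takes essentially the same route as the paper: the paper realizes $[f\leq r]$ as the slice $\epi f\cap\{(x,r):x\in\E\}$ and applies the sum rule (Corollary~\ref{cor: sum_gen}) to the indicator functions $\delta_{\epi f}$ and $\delta_{\{(x,r):x\in\E\}}$, which is the same mechanism as your preimage of $\epi f$ under $x\mapsto(x,r)$ via Theorem~\ref{thm: pre_set}, since the sum rule is itself deduced from that theorem. The details you supply --- the boundary identity and the passage from epigraph faces to genuine graph faces via minimality at a point of $\ri F$ together with Lemma~\ref{lem:prep} --- are precisely the steps the paper's terse proof leaves implicit.
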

\begin{proof}
Since $f$ is continuous and convex, and $r$ is not a minimal value of $f$, one can easily verify 
\begin{align*}
[f\leq r]\times \{r\}&=\epi f \cap \{(x,r): x\in\E\},\\
\bd [f\leq r] &= [f=r].
\end{align*}
Apply now Corollary~\ref{cor: sum_gen} with $f_1=\delta_{\sepi f}$ and $f_2=\delta_{\{(x,r): x\in\E\}}$. \qed
\end{proof}

It will be particularly useful for us to understand faces of the gauge function. Given a closed, convex set $Q$ containing the origin, the {\em gauge} of $Q$, denoted by $\gamma_Q\colon\E\to\R$, is defined to be $\gamma_Q(x):=\inf\,\{\lambda\geq 0: x\in \lambda Q\}$. The epigraph of $\gamma_Q$ is simply $\cl\cone (Q\times \{1\})$. See e.g. \cite[Part I]{con_ter} for more details. For the sake of simplicity, we will only consider gauges of compact sets.

\begin{cor}[Faces of a gauge]\label{cor: face_g}
Consider a compact, convex set $Q\subset\E$ containing the origin in its interior, and let $\gamma_Q\colon\E\to\R$ be the gauge of $Q$. Then $F$ is a face of $\gamma_Q$ if and only if the intersection $F\cap\bd Q$ is a face of $Q$. Moreover, if $M$ is a proper face of $Q$ then $\cone M$ is a face of $\gamma_Q$.
\end{cor}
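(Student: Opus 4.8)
The plan is to reduce the entire statement to Corollary~\ref{cor:face_sublevel}, applied to the continuous convex function $f=\gamma_Q$ at the level $r=1$. First I would record the elementary facts that, since $Q$ is compact with the origin in its interior, the gauge $\gamma_Q$ is finite-valued, continuous, convex, and positively homogeneous, with minimal value $0$ attained only at the origin; hence $1$ is not a minimal value, $[\gamma_Q\leq 1]=Q$, and $[\gamma_Q=1]=\bd Q$. Corollary~\ref{cor:face_sublevel} then delivers the correspondence at once: $F'$ is a proper face of $Q$ if and only if $F'=M\cap\bd Q$ for some face $M$ of $\gamma_Q$. The forward implication of the present statement is then essentially free: if $F$ is a face of $\gamma_Q$, taking $M=F$ in this correspondence shows that $F\cap\bd Q$ is a (proper) face of $Q$.

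The structural step that I expect to be the crux is showing that every nonempty face $F$ of $\gamma_Q$ is a cone. Here I would exploit positive homogeneity together with the representation $\epi\gamma_Q=\cl\cone(Q\times\{1\})$. Given $x\in F$, the segment from the origin $(0,0)\in\epi\gamma_Q$ to $(2x,\gamma_Q(2x))$ runs entirely along $\gph\gamma_Q$, because $\gamma_Q(sx)=s\gamma_Q(x)$, and its relative interior contains the point $(x,\gamma_Q(x))\in\gph\gamma_Q\big|_F$. The defining property of a face then forces the whole segment, and by letting the far endpoint range over $\R_+x$ the whole ray, into $F$. Granting this, any nonempty face satisfies $F=\cone(F\cap\bd Q)$, since every nonzero $x\in F$ normalizes to $x/\gamma_Q(x)\in F\cap\bd Q$ and $F$ is a cone.

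With the cone property in hand the ``moreover'' clause and the reverse implication follow together. Given a proper face $M$ of $Q$, the correspondence above produces a face $\widetilde{M}$ of $\gamma_Q$ with $M=\widetilde{M}\cap\bd Q$; since $\widetilde{M}$ is a cone, $\widetilde{M}=\cone(\widetilde{M}\cap\bd Q)=\cone M$, so $\cone M$ is a face of $\gamma_Q$. Conversely, the reverse implication is just the observation that a cone $F$ with $F\cap\bd Q$ a proper face of $Q$ must coincide with $\cone(F\cap\bd Q)$, which we have just shown to be a face of $\gamma_Q$. The one point that needs care is the role of the improper faces: I would remark that the whole space is never a face of $\gamma_Q$, because its graph is nonconvex ($\gamma_Q$ is not linear), which is precisely why $\bd Q$ itself never arises as an intersection $F\cap\bd Q$ and the claim stays consistent with these intersections ranging over the proper faces of $Q$.
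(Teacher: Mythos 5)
Your proposal is correct and takes essentially the same route as the paper: the equivalence is obtained, exactly as in the paper's proof, by applying Corollary~\ref{cor:face_sublevel} to the continuous convex function $\gamma_Q$ at the level $r=1$, using $[\gamma_Q\leq 1]=Q$ and $[\gamma_Q=1]=\bd Q$. The cone-structure argument you spell out (every nonempty face of $\gamma_Q$ is a cone, hence equals $\cone(F\cap\bd Q)$) is precisely the verification the paper compresses into ``the second claim is easy to verify from the definitions,'' so the two proofs differ only in the level of detail.
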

\begin{proof}
The first claim follows from Corollary~\ref{cor:face_sublevel}, while the second is easy to verify from the definitions.\qed
\end{proof}

\subsection{Exposed faces of functions}
A special class of faces plays a particularly important role in optimization. Recall that a set $F$ is an {\em exposed face} of a  convex set $Q$ if there exists a vector $v\in\E$ satisfying $F=\argmax \{\langle v,x\rangle: x\in Q\}$, or equivalently
$F=\partial \delta^{*}_Q(v)$. In this case $v$ is the {\em exposing vector} of $F$. An exposed face $F$ is a {\em minimal exposed face at} $\bar{x}\in F$ if for any other exposed face $F'$ containing $\bar{x}$, the inclusion $F\subset F'$ holds. It is easy to see that exposed faces are themselves faces, though the converse fails in general; see for example \cite[Section 19]{con_ter}. A particularly nice situation arises when a set $Q$ is {\em facially exposed}, meaning all of its faces are exposed. For example, polyhedral sets, the positive semi-definite cone, and the nuclear norm ball are facially exposed.

We will now extend the notion of an exposed face to functions. We will see however that the calculus of exposed faces is a bit more subtle than its counterpart for general faces; namely, qualification conditions enter the picture.  To illustrate, consider the two set $Q_1:=\R\times \{0\}$ and $Q_2:=\{(x,y): x\leq 0, x^2\leq y\}\cup \R^2_{+}$. Then clearly the origin is an exposed face of $Q_1\cap Q_2=\R_+\times\{0\}$ but it cannot be written as an intersection of the faces of $Q_1$ and $Q_2$. As we will see, the reason for that is twofold: $(i)$ $Q_2$  is not facially exposed and $(ii)$ the relative interiors of the two sets do not intersect.
\begin{defn}[Exposed faces of functions]{\hfill \\}
{\rm Consider an lsc, convex function $f\colon\E\to\overline{\R}$.
A set $F\subset \E$ is an {\em exposed face} of $f$ if it has the form
$F=\partial f^{*}(v)$ for some vector $v\in\E$, or equivalently
 $$F=\argmin_{x\in \E}\, \{f(x)-\langle v,x\rangle\}.$$ In this case $v$ is an {\em exposing vector} of $F$.
An exposed face $F\subset\E$ of $f$ is {\em minimal}  at $\bar{x}\in F$ if for any other exposed face $F'$ of $f$ containing $\bar{x}$ the inclusion $F\subset F'$ holds. 
}
\end{defn}
 
Of course, specializing the definition above to the indicator function of a set, we obtain the classical notions. The following theorem is in analogy to general faces of functions (Definition~\ref{defn:face_gen}). See the appendix for details.
\begin{thm}[Epigraphical coherence of exposed faces]\label{thm:epi_coh} {\hfill \\}
Consider an lsc, convex function $f\colon\E\to\overline{\R}$ and a point $\bar{x}\in\dom f$. Then the following are true.
\begin{enumerate}
\item  A set $F$ is an exposed face of $f$ with exposing vector $v$ if and only if $\gph f\big|_F$ is an exposed face of $\epi f$ with exposing vector $(v,-1)$.
\item A set $F$ is a minimal exposed face of $f$ at $\bar{x}$ if and only if $\gph f\big|_F$ is a minimal exposed face of $\epi f$ at $(\bar{x},f(\bar{x}))$.
\end{enumerate}
\end{thm}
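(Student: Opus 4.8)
The plan is to reduce both parts to a single computation: the linear functional $(x,r)\mapsto\langle v,x\rangle-r=\langle(v,-1),(x,r)\rangle$, when maximized over $\epi f$, is maximized by pushing $r$ down to $f(x)$, so its maximization reduces to the minimization of $f-\langle v,\cdot\rangle$ over $\E$. Concretely, for each fixed $x\in\dom f$ the value $\langle v,x\rangle-r$ is largest at $r=f(x)$, whence
$$\sup_{(x,r)\in\epi f}\{\langle v,x\rangle - r\} = \sup_{x\in\E}\{\langle v,x\rangle - f(x)\} = f^{*}(v),$$
and a pair $(x,r)\in\epi f$ attains this supremum if and only if $r=f(x)$ and $x\in\argmin_{x}\{f(x)-\langle v,x\rangle\}=\partial f^{*}(v)$. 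Thus the exposed face of $\epi f$ with exposing vector $(v,-1)$ is exactly $\gph f\big|_{\partial f^{*}(v)}$. Since both directions of the first claim are just two readings of this one identity, Part~1 follows immediately.

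For Part~2 I would first record two structural facts. A proper (nonempty) exposed face of $\epi f$ always admits an exposing vector $(u,t)$ with $t\le 0$, because $(0,1)$ is a recession direction of $\epi f$. Moreover a nonempty graph-face $\gph f\big|_F$ can never be exposed by a vector with $t=0$: such a vector exposes a face that, together with any of its points, contains the entire vertical ray above it, whereas a graph contains no such ray. Hence by Part~1 the assignment $F\mapsto\gph f\big|_F$ is an inclusion-preserving bijection between the exposed faces of $f$ containing $\bar x$ and the graph-type exposed faces of $\epi f$ containing $(\bar x,f(\bar x))$. The implication ``$\gph f\big|_F$ minimal $\Rightarrow F$ minimal'' is then automatic, since minimality over \emph{all} exposed faces of $\epi f$ in particular gives minimality over the graph-type ones, which transfers through the bijection.

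The converse is where the real work lies. Assume $F$ is the minimal exposed face of $f$ at $\bar x$, with exposing vector $v$, and let $\widehat F$ be any exposed face of $\epi f$ containing $(\bar x,f(\bar x))$, exposed by some $(u,t)$ with $t\le 0$. When $t<0$ I would rescale to $t=-1$ and invoke Part~1 to write $\widehat F=\gph f\big|_{F'}$ with $\bar x\in F'$; minimality of $F$ gives $F\subset F'$ and hence $\gph f\big|_F\subset\widehat F$. The genuinely new case, and the main obstacle, is $t=0$, where $\widehat F=\{(x,r)\in\epi f:x\in G\}$ with $G:=\argmax\{\langle u,x\rangle:x\in\dom f\}$ and $\bar x\in G$; here $\widehat F$ contains vertical rays, is not a graph, and Part~1 does not apply. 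The plan is to perturb the exposing vector: for $\lambda\ge 0$ consider the exposed face $F_{\lambda}:=\argmin_{x}\{f(x)-\langle v+\lambda u,x\rangle\}$ of $f$. Since $\bar x$ minimizes both $f-\langle v,\cdot\rangle$ (as $\bar x\in F$) and $-\langle u,\cdot\rangle$ over $\dom f$ (as $\bar x\in G$), it minimizes the sum, so $\bar x\in F_{\lambda}$ and minimality of $F$ forces $F\subset F_{\lambda}$.

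To close the loop I would show $F_{\lambda}\subset G$ for $\lambda>0$. For $x\in F_{\lambda}$, equality of optimal values at $x$ and $\bar x$ rearranges to
$$\bigl[f(x)-\langle v,x\rangle\bigr]-\bigl[f(\bar x)-\langle v,\bar x\rangle\bigr]=\lambda\bigl(\langle u,x\rangle-\langle u,\bar x\rangle\bigr).$$
The left side is nonnegative because $\bar x$ minimizes $f-\langle v,\cdot\rangle$, while the right side is nonpositive because $x\in\dom f$ and $\bar x\in G$; for $\lambda>0$ both must therefore vanish, giving $\langle u,x\rangle=\langle u,\bar x\rangle$, i.e. $x\in G$. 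Hence $F\subset F_{\lambda}\subset G$, so $\gph f\big|_F\subset\gph f\big|_G\subset\widehat F$. As this inclusion holds for every exposed face $\widehat F$ of $\epi f$ at $(\bar x,f(\bar x))$, the face $\gph f\big|_F$ is minimal, completing Part~2. I expect the $t=0$ case — the ``horizontal'' exposing vectors that create non-graphical faces of $\epi f$ — to be the only delicate point, and the perturbation $v+\lambda u$ is precisely the device that absorbs it.
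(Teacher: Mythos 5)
Your proof is correct, and it takes a genuinely different route from the paper's. Part 1 is the same direct computation in both cases (the paper dismisses it as obvious). For Part 2, the paper chooses a vector $(v,-\beta)\in\ri N_{\epi f}(\bar{x},f(\bar{x}))$, notes $\beta>0$, and invokes Corollary~\ref{cor:rep_conv_set} --- the fact that any vector in the relative interior of the normal cone exposes the minimal exposed face --- so that the minimal exposed face of $\epi f$ at $(\bar{x},f(\bar{x}))$ is exhibited at once in graphical form $\gph f\big|_{F'}$; minimality of $F$ then gives $\gph f\big|_F\subset \gph f\big|_{F'}$, forcing equality. That corollary in turn rests on the appendix machinery: homogenization via gauges (Proposition~\ref{prop:gauge}) and a Moreau-decomposition argument for cones (Lemma~\ref{lem:key}). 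You bypass all of this. You classify the exposing vectors $(u,t)$ of nonempty exposed faces of $\epi f$ by the sign of $t$, reduce the case $t<0$ to Part 1, and absorb the only genuinely non-graphical case $t=0$ (faces containing vertical rays) with the perturbation $v+\lambda u$, obtaining $F\subset F_{\lambda}\subset G$ and hence $\gph f\big|_F\subset\widehat{F}$ for every exposed face $\widehat{F}$ through the point. I checked the individual steps --- the recession argument giving $t\le 0$, the no-vertical-ray observation, the membership $\bar{x}\in F_{\lambda}$, and the sign argument forcing both sides of your displayed identity to vanish when $\lambda>0$ --- and they all go through. What your route buys is a self-contained, elementary proof needing neither normal cones nor Moreau's theorem, and one that makes explicit exactly which exposed faces of $\epi f$ fail to be graphs; in effect your perturbation re-proves, from scratch, the special case of Corollary~\ref{cor:rep_conv_set} that the theorem actually needs. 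What the paper's route buys is economy within the paper: the relative-interior-normal-vector result is developed anyway for Theorem~\ref{thm:min_func_rep}, so given it, Part 2 collapses to a three-line deduction.
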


Recall that the minimal face of a convex set $Q$ at $\bar{x}$ is the unique face of $Q$ containing $\bar{x}$ in its relative interior. A similar characterization (in dual terms) holds for exposed faces. See the appendix for a detailed proof.
\begin{thm}[Minimal exposed faces of functions] \label{thm:min_func_rep}{\hfill \\}
Consider an lsc, convex function $f\colon\E\to\overline{\R}$ and a point $\bar{x}\in\E$. Then for any vector $v\in\ri\partial f(\bar{x})$, the set $\partial f^{*}(v)$ is a minimal exposed face of $f$ at $\bar{x}$.
\end{thm}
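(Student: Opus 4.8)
The plan is to work entirely in the functional setting, avoiding any passage to the epigraph, by exploiting the identity $\partial f^{*}=(\partial f)^{-1}$ together with the prolongation characterization of relative interiors. First observe that we may assume $\partial f(\bar{x})\neq\emptyset$, since otherwise $\ri\partial f(\bar{x})=\emptyset$ and the statement is vacuous; in particular $\bar{x}\in\dom f$. The crucial preliminary remark is that the exposed faces of $f$ that contain $\bar{x}$ are \emph{exactly} the sets $\partial f^{*}(v)$ with $v\in\partial f(\bar{x})$: indeed $\bar{x}\in\partial f^{*}(v)$ iff $v\in\partial f(\bar{x})$, by $\partial f^{*}=(\partial f)^{-1}$. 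Thus, writing $v_{0}$ for the fixed vector of $\ri\partial f(\bar{x})$, proving that $\partial f^{*}(v_{0})$ is minimal at $\bar{x}$ amounts to establishing the single inclusion $\partial f^{*}(v_{0})\subseteq\partial f^{*}(v)$ for every $v\in\partial f(\bar{x})$.

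To pry this inclusion open I would use the defining feature of the relative interior (the prolongation principle): since $v_{0}\in\ri\partial f(\bar{x})$, for each $v\in\partial f(\bar{x})$ there is some $\epsilon>0$ with $v_{1}:=(1+\epsilon)v_{0}-\epsilon v\in\partial f(\bar{x})$, so that $v_{0}$ is a genuine convex combination $v_{0}=\tfrac{1}{1+\epsilon}v_{1}+\tfrac{\epsilon}{1+\epsilon}v$ of two subgradients $v_{1},v$ at $\bar{x}$. Now fix any $x\in\partial f^{*}(v_{0})$. Since both $x$ and $\bar{x}$ minimize $f-\langle v_{0},\cdot\rangle$, we get the equation $f(x)-f(\bar{x})=\langle v_{0},x-\bar{x}\rangle$, while the subgradient inequalities at $\bar{x}$ give $\langle v,x-\bar{x}\rangle\le f(x)-f(\bar{x})$ and $\langle v_{1},x-\bar{x}\rangle\le f(x)-f(\bar{x})$. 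Taking the same convex combination of these two inequalities reproduces exactly $\langle v_{0},x-\bar{x}\rangle=f(x)-f(\bar{x})$; since a strict convex combination of two numbers can equal their shared upper bound only when both already attain it, we conclude $\langle v,x-\bar{x}\rangle=f(x)-f(\bar{x})$. A one-line Fenchel--Young computation then turns this equality into $f^{*}(v)=\langle v,x\rangle-f(x)$, i.e. $x\in\partial f^{*}(v)$, which is the desired inclusion.

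The only real obstacle is the squeezing step in the previous paragraph: one must be sure that the prolongation property is available (this is the standard relative-interior characterization, valid because $\partial f(\bar{x})$ is convex), and that the convex combination genuinely forces equality rather than mere inequality --- this is exactly why we need $v_{0}$ in the \emph{relative} interior and not merely in $\partial f(\bar{x})$, and why a boundary choice of $v_{0}$ would fail to expose the minimal face.

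As a cross-check, and to match the surrounding development, I would note that the same conclusion can be reached through Theorem~\ref{thm:epi_coh}: minimality of $\partial f^{*}(v_{0})$ at $\bar{x}$ is equivalent to minimality of the exposed face of $\epi f$ carved out by the vector $(v_{0},-1)$ at $(\bar{x},f(\bar{x}))$, and one would invoke the dual characterization of minimal exposed faces of a \emph{set} (a vector in the relative interior of the normal cone exposes the minimal face). The price of this route is an extra lemma identifying $N_{\epi f}(\bar{x},f(\bar{x}))$ with the homogenization of $\partial f(\bar{x})$ and checking that $(v_{0},-1)\in\ri N_{\epi f}(\bar{x},f(\bar{x}))$ precisely when $v_{0}\in\ri\partial f(\bar{x})$; the direct functional argument above sidesteps this bookkeeping.
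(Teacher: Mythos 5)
Your proof is correct, and it takes a genuinely different route from the paper's. The paper's own proof is a one-line deduction from Corollary~\ref{cor:rep_conv_set} and Theorem~\ref{thm:epi_coh}: it first establishes the statement for closed convex cones via Moreau's decomposition (Theorem~\ref{thm:exp}), homogenizes through the gauge to obtain the statement for convex sets (Corollary~\ref{cor:rep_conv_set}), and then transfers to functions through epigraphical coherence. You instead work entirely at the level of subdifferentials: using $\partial f^{*}=(\partial f)^{-1}$ you observe that the exposed faces of $f$ containing $\bar{x}$ are precisely the sets $\partial f^{*}(v)$ with $v\in\partial f(\bar{x})$, so minimality reduces to the single inclusion $\partial f^{*}(v_{0})\subset\partial f^{*}(v)$ for every $v\in\partial f(\bar{x})$; the prolongation characterization of $\ri\partial f(\bar{x})$ writes $v_{0}$ as a strict convex combination of $v$ and another subgradient $v_{1}$ at $\bar{x}$, and your squeezing argument together with Fenchel--Young equality yields the inclusion. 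All the steps check out: the reduction to one inclusion is exactly the paper's definition of minimality, the prolongation principle applies because $\partial f(\bar{x})$ is convex, and the strictly positive weights force equality in both subgradient inequalities. As for what each approach buys: the paper's detour is essentially free in context, since the cone/set/epigraph machinery of the appendix is needed anyway (for instance in the proof of Theorem~\ref{thm:epi_coh}), and it keeps faces of sets and of functions in one uniform epigraphical framework; your argument is shorter, self-contained, and avoids Moreau's decomposition and gauges entirely. It is also more primitive in a precise sense: specializing it to the indicator function $f=\delta_{Q}$, so that $\partial f(\bar{x})=N_{Q}(\bar{x})$ and $\partial f^{*}(v)=\partial\delta_{Q}^{*}(v)$, recovers Corollary~\ref{cor:rep_conv_set}, reversing the paper's direction of derivation.
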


We now record various calculus rules of exposed faces. Again the basic result in this direction is the chain rule. In contrast to the development of facial calculus, however, the key technical tool here is the subdifferential calculus, and in particular the relationship between $\partial (f\circ \A)(x)$ and $\mathcal{A}^{*}\partial f(\A x)$, where $f$ is an lsc, convex function and $\A$ is a linear transformation \cite[Theorem 23.9]{con_ter}. 
\begin{thm}[Chain rule for conjugates] \label{thm:chain_conj}{\hfill \\}
Consider an lsc, convex function $f\colon{\bf H}\to\overline{\R}$ and a linear transformation $\mathcal{A}\colon\E\to{\bf H}$, where ${\bf H}$ and $\E$ are Euclidean spaces. Let $\bar{x}$ be a point in $\E$ and consider a vector $v\in \partial f(A\bar{x})$. Then the equation
\begin{equation}\label{eqn:chain_conj}
\partial (f\circ \A)^{*}(\A^{*}v) = \A^{-1}\partial f^{*}(v) \qquad\textrm{ holds}.
\end{equation}
\end{thm}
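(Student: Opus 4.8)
The plan is to establish the equation \eqref{eqn:chain_conj} by reducing it to the standard subdifferential chain rule for the composition $f \circ \A$, together with the fundamental duality relation $\partial f^* = (\partial f)^{-1}$ for lsc convex functions. First I would recall that for an lsc, convex function $g$, we have $\partial g^* = (\partial g)^{-1}$, so that $\partial(f\circ\A)^*(\A^* v) = \{x : \A^* v \in \partial(f\circ\A)(x)\}$. The strategy is therefore to unwind both sides into these inverse-subdifferential descriptions and show they determine the same set of points $x$.

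The central input is the subdifferential chain rule \cite[Theorem 23.9]{con_ter}, which states that under the appropriate setup, $\partial(f\circ\A)(x) = \A^*\partial f(\A x)$. Assuming this identity, the left-hand side becomes $\{x : \A^* v \in \A^* \partial f(\A x)\}$. On the right-hand side, $\A^{-1}\partial f^*(v)$ unwinds to $\{x : \A x \in \partial f^*(v)\} = \{x : v \in \partial f(\A x)\}$, again using $\partial f^* = (\partial f)^{-1}$. So the whole theorem reduces to showing that, for the relevant points $x$, the membership $\A^* v \in \A^*\partial f(\A x)$ is equivalent to $v \in \partial f(\A x)$. The inclusion $\supseteq$ (i.e. $v\in\partial f(\A x)$ implies $\A^* v \in \A^*\partial f(\A x)$) is immediate. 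The reverse inclusion is where the hypothesis $v \in \partial f(\A\bar{x})$ does its work: it guarantees that $\A^*v$ is a genuine element of $\partial(f\circ\A)^*$ in the range where the chain rule applies, so that the set is nonempty and the two descriptions match.

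I would carry the steps out in this order: first, set $g := f\circ\A$ and record $\partial g^*(u) = (\partial g)^{-1}(u)$; second, invoke the chain rule to replace $\partial g(x)$ by $\A^*\partial f(\A x)$; third, translate the right-hand side $\A^{-1}\partial f^*(v)$ into $\{x : v\in\partial f(\A x)\}$; fourth, verify the two resulting sets coincide, using the hypothesis $v\in\partial f(\A\bar{x})$ to handle the nontrivial containment and to ensure nonemptiness. A clean alternative I would keep in reserve is to dualize Theorem~\ref{thm:chain_gen} (faces of a composition) directly: since exposed faces correspond to subdifferentials of the conjugate, the set equation \eqref{eqn:chain_conj} is precisely the exposed-face analogue of the identity $M = \A^{-1}(F)$, and one can try to transport the facial chain rule through the correspondence $F = \partial f^*(v) \leftrightarrow \gph f\big|_F$.

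The main obstacle I anticipate is the qualification condition. The subdifferential chain rule $\partial(f\circ\A)(x) = \A^*\partial f(\A x)$ does not hold unconditionally; it requires a constraint qualification such as $\rge \A \cap \ri(\dom f) \neq \emptyset$, and the inclusion $\A^*\partial f(\A x) \subseteq \partial(f\circ\A)(x)$ holds freely while the reverse needs care. The theorem's hypothesis $v \in \partial f(\A\bar{x})$ is precisely what is available to certify that $\A^* v$ lies in the image where the equality holds at the relevant point, so the delicate part of the write-up will be checking that this pointwise hypothesis suffices to justify the chain rule exactly where it is applied, rather than globally, and that no spurious elements of $\A^{-1}\partial f^*(v)$ are lost or gained when passing through $\A^*$. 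This is also the subtlety the paper foreshadows when it notes that ``qualification conditions enter the picture'' for exposed faces, so I would make that dependence explicit.
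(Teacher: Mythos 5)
Your plan correctly unwinds both sides via $\partial g^*=(\partial g)^{-1}$, and your easy direction ($v\in\partial f(\A x)$ implies $x\in\partial(f\circ\A)^*(\A^*v)$) is fine, since it uses only the qualification-free inclusion $\A^*\partial f(\A x)\subset\partial(f\circ\A)(x)$; this matches the paper's proof of $\supset$. The genuine gap is in the hard inclusion, where your engine is the exact chain rule $\partial(f\circ\A)(x)=\A^*\partial f(\A x)$. That identity is not available: the theorem assumes only $v\in\partial f(\A\bar x)$, which gives $\rge\A\cap\dom f\neq\emptyset$ but not $\rge\A\cap\ri(\dom f)\neq\emptyset$, and your hope that the pointwise hypothesis ``certifies that the chain rule applies at the relevant point'' is false. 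Concretely, take $f=\delta_Q$ with $Q=\{(y_1,y_2):y_1\leq 0,\ y_1^2\leq y_2\}\cup\R^2_+$ (the paper's own example set), $\A\colon\R\to\R^2$ given by $\A t=(t,0)$, $\bar x=1$, and $v=(0,-1)\in\partial f(\A\bar x)$. Then $f\circ\A=\delta_{\R_+}$, and the point $x=0$ lies in $\partial(f\circ\A)^*(\A^*v)$, yet $\partial(f\circ\A)(0)=(-\infty,0]$ while $\A^*\partial f(\A 0)=\{0\}$: the exact chain rule fails at precisely a relevant point even though every hypothesis of the theorem holds. So the reduction of the left-hand side to $\{x:\A^*v\in\A^*\partial f(\A x)\}$ is unjustified (the resulting set equality happens to be true, but only because the theorem itself is true, which you cannot invoke). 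Moreover, even granting the chain rule, your remaining step is unproved: since $\A^*$ need not be injective, $\A^*v\in\A^*\partial f(\A x)$ only yields some $w\in\partial f(\A x)$ with $\A^*(v-w)=0$, and deducing $v\in\partial f(\A x)$ from this genuinely requires the hypothesis $v\in\partial f(\A\bar x)$ through an actual argument; ``the set is nonempty and the two descriptions match'' is not one. Your reserve alternative (dualizing Theorem~\ref{thm:chain_gen}) hits the same wall: general faces and exposed faces do not correspond without extra hypotheses, which is exactly the subtlety the paper flags.

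The fix --- and this is the paper's proof --- is to bypass the chain rule entirely in the hard direction and run a direct two-inequality argument. Suppose $\A^*v\in\partial(f\circ\A)(x)$. The subgradient inequality for $f$ at $\A\bar x$, evaluated at $\A x$, gives $f(\A x)\geq f(\A\bar x)+\langle v,\A x-\A\bar x\rangle$; the subgradient inequality for $f\circ\A$ at $x$, evaluated at $\bar x$, gives $f(\A\bar x)\geq f(\A x)+\langle \A^*v,\bar x-x\rangle$. Combining the two and using $\langle\A^*v,\bar x-x\rangle=\langle v,\A\bar x-\A x\rangle$ forces equality throughout, so $f(\A x)-\langle v,\A x\rangle=f(\A\bar x)-\langle v,\A\bar x\rangle$; since $\A\bar x$ minimizes $y\mapsto f(y)-\langle v,y\rangle$ (that is exactly the hypothesis $v\in\partial f(\A\bar x)$), so does $\A x$, i.e., $v\in\partial f(\A x)$ and hence $x\in\A^{-1}\partial f^*(v)$. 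This short argument is the entire content your proposal is missing, and it is what makes the theorem qualification-free --- which in turn is why the paper can apply it in Theorem~\ref{thm: exp_comp} without any relative-interior assumption in claim 1.
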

\begin{proof}
The inclusion $\supset$ follows directly from the chain rule $\partial (f\circ \A)(x)\subset \mathcal{A}^{*}\partial f(\A x)$. To see this, consider a point $x\in \A^{-1}\partial f^{*}(v)$. Then there exists a point $z$ in ${\bf H}$ satisfying $z=\A x$ and $v\in\partial f(z)$. We successfully conclude $\A^{*} v\in \A^{*}\partial f(\A x)\subset \partial (f\circ \A)(x)$, as claimed. 

We now prove the inclusion $\subset$ in equation \eqref{eqn:chain_conj}. To this end, consider a point $x$ satisfying $\A^{*}v\in \partial (f\circ\A)(x)$. We deduce 
$$f(\A x) \geq f(\A\bar{x})+ \langle v, \A x-\A \bar{x}\rangle \geq f(\A x) +\langle \A^{*}v, \bar{x}-x \rangle+ \langle v, \A x-\A \bar{x}\rangle =f(\A x) .$$ Thus we have equality
 throughout. Consequently $\A x$ minimizes the function $y\mapsto f(y)-\langle v,y\rangle$, and so we have $v\in\partial f(\A x)$. This completes the proof.\qed
\end{proof}

\begin{thm}[Exposed faces of a composition]\label{thm: exp_comp}
{\hfill \\}
Consider an lsc, convex function $f\colon{\bf H}\to\overline{\R}$ and a linear mapping $\mathcal{A}\colon\E\to{\bf H}$, where ${\bf H}$ and $\E$ are Euclidean spaces. Then the following are true.
\begin{enumerate}
\item If $F$ is an exposed face of $f$ with exposing vector $v$, then $\A^{-1}F$ is an exposed face of $f\circ \A$ with exposing vector $\A^{*}v$.\label{item:chain1}
\item If the range of $\A$ meets $\ri(\dom f)$, then any exposed face $M$ of $f\circ\A$ can be written as $M=\A^{-1}F$ for some exposed face $F$ of $f$. \label{item:chain2}
\end{enumerate}
\end{thm}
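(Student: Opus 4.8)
The plan is to read both assertions directly off the chain rule for conjugates (Theorem~\ref{thm:chain_conj}), supplemented in part~(\ref{item:chain2}) by the exact subdifferential chain rule $\partial(f\circ\A)(x)=\A^{*}\partial f(\A x)$, which is the only place the qualification condition is used. Throughout I will exploit that an exposed face with exposing vector $u$ is, by definition, the set $\partial g^{*}(u)$ for the relevant function $g$, together with the identity $\partial g^{*}=(\partial g)^{-1}$ valid for lsc convex $g$.

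For part~(\ref{item:chain1}), I would write $F=\partial f^{*}(v)$, so that the claim ``$\A^{-1}F$ is the exposed face of $f\circ\A$ with exposing vector $\A^{*}v$'' is literally the equation $\A^{-1}\partial f^{*}(v)=\partial(f\circ\A)^{*}(\A^{*}v)$. This is exactly the conclusion of Theorem~\ref{thm:chain_conj}, whose hypothesis asks for a point $\bar x$ with $v\in\partial f(\A\bar x)$. Now $v\in\partial f(\A\bar x)$ is equivalent to $\A\bar x\in\partial f^{*}(v)=F$, i.e.\ to $\bar x\in\A^{-1}F$; so the hypothesis is met precisely when the pullback $\A^{-1}F$ is nonempty. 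I would therefore pick any $\bar x\in\A^{-1}F$ and invoke Theorem~\ref{thm:chain_conj}. One should flag that this nonemptiness is genuinely needed: if $F$ misses the range of $\A$, the right-hand side $\partial(f\circ\A)^{*}(\A^{*}v)$ can still be nonempty, so the identity, and hence the statement, should be read with $\A^{-1}F\neq\emptyset$ understood.

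For part~(\ref{item:chain2}), let $M=\partial(f\circ\A)^{*}(w)$ be an exposed face of $f\circ\A$, and assume $M\neq\emptyset$. I would choose $x_{0}\in M$, so that $w\in\partial(f\circ\A)(x_{0})$. The qualification condition $\rge\A\cap\ri(\dom f)\neq\emptyset$ lets me apply the exact chain rule \cite[Theorem 23.9]{con_ter}, giving $\partial(f\circ\A)(x_{0})=\A^{*}\partial f(\A x_{0})$; hence $w=\A^{*}v$ for some $v\in\partial f(\A x_{0})$. With this $v$ the hypothesis of Theorem~\ref{thm:chain_conj} holds (take $\bar x=x_{0}$), and that theorem yields
\[ M=\partial(f\circ\A)^{*}(w)=\partial(f\circ\A)^{*}(\A^{*}v)=\A^{-1}\partial f^{*}(v)=\A^{-1}F, \]
where $F:=\partial f^{*}(v)$ is the exposed face of $f$ exposed by $v$ (nonempty, since $\A x_{0}\in F$). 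This produces the desired $F$.

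The crux, and the sole place a qualification is consumed, is the lifting step in part~(\ref{item:chain2}): turning $w\in\partial(f\circ\A)(x_{0})$ into an $f$-subgradient $v$ with $w=\A^{*}v$. The easy inclusion $\A^{*}\partial f(\A x)\subseteq\partial(f\circ\A)(x)$ holds unconditionally, which is why Theorem~\ref{thm:chain_conj}, and with it part~(\ref{item:chain1}), needs no qualification; but the reverse inclusion $\partial(f\circ\A)(x)\subseteq\A^{*}\partial f(\A x)$, which is what actually supplies $v$, can fail in general and holds precisely under $\rge\A\cap\ri(\dom f)\neq\emptyset$. That asymmetry is the whole reason part~(\ref{item:chain2}) carries the hypothesis and part~(\ref{item:chain1}) does not. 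The only remaining loose ends are the degenerate empty-face cases, which I would settle by agreeing to consider nonempty exposed faces only, the empty set being a trivial face.
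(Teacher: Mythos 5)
Your proposal is correct and follows essentially the same route as the paper: both claims are read off the chain rule for conjugates (Theorem~\ref{thm:chain_conj}), with the qualification condition used only in part~(\ref{item:chain2}) to invoke the exact subdifferential chain rule $\partial(f\circ\A)(x)=\A^{*}\partial f(\A x)$ and thereby lift the exposing vector $w$ to $w=\A^{*}v$ with $v\in\partial f(\A x_{0})$. Your version merely spells out the details the paper leaves implicit (choosing $\bar x\in\A^{-1}F$, resp.\ $x_{0}\in M$, and the nonemptiness caveats), which is a faithful elaboration rather than a different argument.
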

\begin{proof}
Claim \ref{item:chain1} follows immediately from Theorem~\ref{thm:chain_conj}. Claim \ref{item:chain2} also follows from Theorem~\ref{thm:chain_conj} since the standing assumptions of claim \ref{item:chain2} imply the exact chain rule $\partial (f\circ\A)(x)=\A^{*}\partial f(\A x)$. 
\qed
\end{proof}

\begin{cor}[Exposed faces of a sum]\label{cor: sum}
Consider two lsc, convex functions $f_1\colon\E\to\overline{\R}$ and $f_2\colon\E\to\overline{\R}$.
Then the following are true.
\begin{enumerate}
\item If $F_1$ and $F_2$ are exposed faces of $f_1$ and $f_2$ with exposing vector $v_1$ and $v_2$, respectively, then $F_1\cap F_2$ is an exposed face of $f_1+f_2$ with exposing vector $v_1+v_2$.
\item If $\ri (\dom f_1)$ meets $\ri(\dom f_2)$, then any exposed face $F$ of $f_1+f_2$ can be written as $F_1\cap F_2$ for some exposed face $F_1$ of $f_1$ and $F_2$ of $f_2$.
\end{enumerate}
\end{cor}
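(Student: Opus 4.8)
The plan is to derive Corollary~\ref{cor: sum} from the chain rule for exposed faces of a composition, Theorem~\ref{thm: exp_comp}, exactly as Corollary~\ref{cor: sum_gen} was derived from Theorem~\ref{thm:chain_gen} in the unexposed setting. First I would set up the diagonal embedding $\A\colon\E\to\E\times\E$ given by $\A(x)=(x,x)$, together with the separable function $g\colon\E\times\E\to\overline{\R}$ defined by $g(x,y)=f_1(x)+f_2(y)$. The elementary computation $(f_1+f_2)(x)=g(\A x)=(g\circ\A)(x)$ then reduces the study of exposed faces of the sum to the study of exposed faces of the composition $g\circ\A$, to which Theorem~\ref{thm: exp_comp} applies directly.

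The next step is to understand the exposed faces and subdifferential structure of the separable function $g$. Since $g$ splits as a sum over the two coordinate blocks, its conjugate also separates, $g^{*}(u,w)=f_1^{*}(u)+f_2^{*}(w)$, and hence $\partial g(x,y)=\partial f_1(x)\times\partial f_2(y)$ and $\partial g^{*}(u,w)=\partial f_1^{*}(u)\times\partial f_2^{*}(w)$. Consequently, the exposed faces of $g$ are precisely the products $F_1\times F_2$ where $F_i$ is an exposed face of $f_i$, and if $F_i$ has exposing vector $v_i$ then $F_1\times F_2$ has exposing vector $(v_1,v_2)$. I would record these facts as a short preliminary observation, since they convert the abstract faces of $g$ into the concrete Cartesian products that appear in the statement.

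With this dictionary in place, the two claims follow by unwinding the preimage under $\A$. For the first claim, I apply part~\ref{item:chain1} of Theorem~\ref{thm: exp_comp}: if $F_1,F_2$ are exposed faces of $f_1,f_2$ with exposing vectors $v_1,v_2$, then $F_1\times F_2$ is an exposed face of $g$ with exposing vector $(v_1,v_2)$, so $\A^{-1}(F_1\times F_2)$ is an exposed face of $g\circ\A=f_1+f_2$ with exposing vector $\A^{*}(v_1,v_2)$. Since $\A^{-1}(F_1\times F_2)=\{x:(x,x)\in F_1\times F_2\}=F_1\cap F_2$ and the adjoint of the diagonal map satisfies $\A^{*}(v_1,v_2)=v_1+v_2$, this is exactly the asserted statement. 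For the second claim, I apply part~\ref{item:chain2}: any exposed face $F$ of $f_1+f_2=g\circ\A$ can be written $F=\A^{-1}(G)$ for some exposed face $G$ of $g$, provided the range of $\A$ meets $\ri(\dom g)$; writing $G=F_1\times F_2$ via the preliminary observation yields $F=F_1\cap F_2$.

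The main obstacle is verifying the qualification condition in the second claim, namely translating the hypothesis of Theorem~\ref{thm: exp_comp}\ref{item:chain2} into the stated condition that $\ri(\dom f_1)$ meets $\ri(\dom f_2)$. Here one must check that $\rge\A\cap\ri(\dom g)\neq\emptyset$ is equivalent to the stated intersection hypothesis. Since $\dom g=\dom f_1\times\dom f_2$, one has $\ri(\dom g)=\ri(\dom f_1)\times\ri(\dom f_2)$, and $\rge\A=\{(x,x):x\in\E\}$ is the diagonal; thus the range meets $\ri(\dom g)$ precisely when there is a point $x$ lying in both $\ri(\dom f_1)$ and $\ri(\dom f_2)$, which is the stated condition. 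This is the one place where genuine care is required, because it is exactly the failure of such a relative-interior condition that produces the pathological counterexample with $Q_1$ and $Q_2$ discussed before the definition of exposed faces of functions; everything else is a mechanical translation through the diagonal embedding.
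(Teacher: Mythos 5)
Your proposal is correct and follows exactly the paper's own route: the paper proves Corollary~\ref{cor: sum} by the one-line instruction to apply Theorem~\ref{thm: exp_comp} to the diagonal map $\A(x)=(x,x)$ and the separable function $g(x,y)=f_1(x)+f_2(y)$, and your argument is precisely that reduction with the implicit details (separability of $g^*$ and $\partial g^*$, the identifications $\A^{-1}(F_1\times F_2)=F_1\cap F_2$ and $\A^*(v_1,v_2)=v_1+v_2$, and the equivalence of $\rge\A\cap\ri(\dom g)\neq\emptyset$ with $\ri(\dom f_1)\cap\ri(\dom f_2)\neq\emptyset$) written out in full.
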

\begin{proof}
Apply ~\ref{thm: exp_comp} to the linear mapping $\A (x)=(x,x)$ and to the separable function $g(x,y)=f_1(x)+f_2(y)$.\qed
\end{proof}

Finally we record a relationship between exposed faces of a function and exposed faces of its sublevel sets.
\begin{cor}[Sublevel sets]\label{cor:sub_exp} \hfill \\
Consider a continuous convex function $f\colon\E\to\overline{\R}$ and a real number $r$. Then the following are true.
\begin{enumerate}
\item If $F$ is an exposed face of $f$, intersecting $[f=r]$, with exposing vector $v$, then $F\cap [f=r]$ is an exposed face of $[f\leq r]$ with exposing vector v.
\item If $r$ is not the minimum value of $f$, then every exposed face $F$ of $[f\leq r]$ has the form $F=F'\cap H$, where  $F'$ is an exposed face of $\epi f$. 
\end{enumerate}
\end{cor}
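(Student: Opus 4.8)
The plan is to handle the two claims by different though related devices: claim (1) by a direct computation with the defining inequalities of an exposed face, and claim (2) by specializing the exposed-face sum rule (Corollary~\ref{cor: sum}) exactly as Corollary~\ref{cor:face_sublevel} specialized the general sum rule. For claim (1) I would write $c$ for the minimal value of the function $x\mapsto f(x)-\langle v,x\rangle$, so that this function equals $c$ precisely on $F$ and is at least $c$ everywhere. Then for any feasible $x\in[f\leq r]$ one has the chain $\langle v,x\rangle\leq f(x)-c\leq r-c$, the first inequality being the exposed-face inequality for $F$ and the second being feasibility. Equality throughout forces simultaneously $f(x)-\langle v,x\rangle=c$ (that is, $x\in F$) and $f(x)=r$ (that is, $x\in[f=r]$), while conversely every such $x$ attains $\langle v,x\rangle=r-c$. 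Hence $v$ exposes exactly $F\cap[f=r]$ over $[f\leq r]$, a set that is nonempty because $F$ meets $[f=r]$. No qualification condition is needed here.

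For claim (2) I would set $H:=\{(x,r):x\in\E\}$ and recall from the proof of Corollary~\ref{cor:face_sublevel} the identification $[f\leq r]\times\{r\}=\epi f\cap H$. Under the trivial correspondence $F\leftrightarrow F\times\{r\}$, the exposed faces of $[f\leq r]$ match the exposed faces of $\epi f\cap H$: an exposing vector $v$ for $F$ lifts to $(v,0)$, and conversely any exposing vector $(w,\beta)$ for $F\times\{r\}$ may be replaced by $(w,0)$ since the height is pinned to $r$. Viewing $\epi f\cap H$ as the common domain of $\delta_{\sepi f}+\delta_H$, I would apply the second part of Corollary~\ref{cor: sum}: every exposed face of this sum has the form $F_1\cap F_2$ with $F_1$ an exposed face of $\epi f$ and $F_2$ an exposed face of $H$. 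Because $H$ is an affine subspace, a linear functional is bounded above on it only when it is constant there, so the sole nonempty exposed face of $H$ is $H$ itself; thus $F_2=H$ and $F\times\{r\}=F_1\cap H$, which is the claimed form with $F':=F_1$.

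The main obstacle, and the only place where the hypothesis that $r$ is not the minimal value of $f$ is used, is verifying the qualification condition $\ri(\dom\delta_{\sepi f})\cap\ri(\dom\delta_H)\neq\emptyset$ demanded by Corollary~\ref{cor: sum}(2), i.e. $\ri(\epi f)\cap H\neq\emptyset$. Since $\ri H=H$, I must produce a point $(x,r)$ with $x\in\ri(\dom f)$ and $f(x)<r$. Because $r$ exceeds the minimal value, there is some $y$ with $f(y)<r$; combining $y$ with any point of $\ri(\dom f)$ along a segment and invoking convexity together with continuity of $f$ produces relative-interior points whose value is still below $r$, so the required $x$ exists and the condition holds. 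Once this is in place the remainder is bookkeeping, and it is worth stressing that without such an interior hypothesis the decomposition can genuinely fail, precisely as in the example $Q_1,Q_2$ preceding the definition of exposed faces of functions.
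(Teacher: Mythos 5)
Your proposal is correct. For claim (2) it follows exactly the paper's route: the paper's entire proof reads ``Apply Corollary~\ref{cor: sum} with $f_1=\delta_{\sepi f}$ and $f_2=\delta_{\{(x,r):\, x\in\E\}}$,'' which is precisely your decomposition. The difference is that you carry out the bookkeeping the paper leaves implicit: the identification of exposed faces of $[f\leq r]$ with those of $\epi f\cap H$, where $H=\{(x,r):x\in\E\}$, the observation that the only nonempty exposed face of the affine set $H$ is $H$ itself, and, most importantly, the verification of the qualification condition $\ri(\epi f)\cap H\neq\emptyset$ demanded by part (2) of Corollary~\ref{cor: sum}. That verification is the only place where the hypothesis on $r$ enters, and the paper never spells it out, so your segment argument producing a point $x\in\ri(\dom f)$ with $f(x)<r$ is a genuine and worthwhile addition (note that convexity alone suffices there; continuity of $f$ is not needed for that step). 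For claim (1) you depart from the paper: rather than obtaining it from part (1) of Corollary~\ref{cor: sum} (which requires no qualification condition), you give a direct computation with the chain $\langle v,x\rangle\leq f(x)-c\leq r-c$ and equality exactly on $F\cap[f=r]$. This is more elementary and self-contained, and it makes transparent why no interiority hypothesis is needed in that direction; what the paper's uniform treatment buys is brevity and the rhetorical point that both claims are instances of the same facial calculus.
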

\begin{proof}
Apply Theorem~\ref{cor: sum} with $f_1=\delta_{\sepi f}$ and $f_2=\delta_{\{(x,r): x\in\E\}}$.\qed
\end{proof}

\begin{cor}[Exposed faces of a gauge]\label{cor: exp_g}\hfill \\
Consider a closed convex set $Q\subset\E$ containing the origin in its interior, and let $\gamma_Q\colon\E\to\R$ be the gauge of $Q$. Then $F$ is an exposed face of $\gamma_Q$ if and only if the intersection $F\cap\bd Q$ is a face of $Q$. Moreover, if $M$ is a proper face of $Q$ then $\cone M$ is a face of $\gamma_Q$.
\end{cor}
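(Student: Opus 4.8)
The plan is to obtain Corollary~\ref{cor: exp_g} from the exposed sublevel-set correspondence in Corollary~\ref{cor:sub_exp}, mirroring precisely the way the general-face statement (Corollary~\ref{cor: face_g}) is deduced from Corollary~\ref{cor:face_sublevel}. First I would record the two standing identifications that make $Q$ a sublevel set of its own gauge: since $\gamma_Q$ is continuous and convex with $\gamma_Q(0)=0$ and the origin lies in $\inter Q$, one has $Q=[\gamma_Q\le 1]$ and $\bd Q=[\gamma_Q=1]$, and the value $1$ is not the minimal value of $\gamma_Q$. These are exactly the hypotheses of Corollary~\ref{cor:sub_exp} for $f=\gamma_Q$ and $r=1$, so the whole argument becomes a matter of transporting exposed faces between the function $\gamma_Q$ and the set $Q$ by slicing at level one and by coning.

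For the forward implication, suppose $F$ is an exposed face of $\gamma_Q$ with exposing vector $v$. Since $\gamma_Q$ is positively homogeneous, $\epi\gamma_Q$ is a cone and hence so is $F$, a convex cone through the origin. If $F$ meets $\bd Q=[\gamma_Q=1]$, then Corollary~\ref{cor:sub_exp}(1) applies verbatim and shows that $F\cap\bd Q$ is an exposed face of $Q$ with the same exposing vector $v$; in particular it is a face of $Q$. If $F$ misses $\bd Q$, then no nonzero point can belong to $F$ (a nonzero point would generate a ray crossing the level set $[\gamma_Q=1]$), so $F=\{0\}$ and $F\cap\bd Q=\emptyset$ is the trivial face. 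This disposes of one direction for an arbitrary closed convex $Q$.

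The ``moreover'' clause feeds the reverse implication, and here the cone construction does the work. Given a proper face $M$ of $Q$, I would first note $M\subseteq\bd Q$ --- a proper face of a body with interior cannot meet $\inter Q$ --- so that $(\cone M)\cap\bd Q=M$, because $\lambda m$ with $m\in M$ and $\lambda\ge 0$ lies on $\bd Q$ exactly when $\lambda\gamma_Q(m)=1$, i.e. $\lambda=1$. That $\cone M$ is a face of $\gamma_Q$ then follows straight from the definitions: $\gph\gamma_Q\big|_{\cone M}$ is the conical hull of the face $M\times\{1\}$ of the base $Q\times\{1\}$ of the cone $\epi\gamma_Q=\cl\cone(Q\times\{1\})$, and the conical hull of a face of the base is a face of the cone. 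Since any exposed face of $\gamma_Q$ is a cone through the origin and is therefore recovered from its level-one slice as $\cone(F\cap\bd Q)$, the reverse direction reduces to lifting the face $M:=F\cap\bd Q$ of $Q$ to an exposed face $\cone M$ of $\gamma_Q$; I would obtain the required exposing vector from Corollary~\ref{cor:sub_exp}(2), which (through the epigraphical coherence of Theorem~\ref{thm:epi_coh}) realizes a face of $Q$ as the level-one slice of an exposed face of $\epi\gamma_Q$.

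The main obstacle is the control of \emph{exposedness} in this last step. The forward direction always returns an \emph{exposed} face of $Q$, so for the equivalence to close up one must lift a face $M$ of $Q$ back to an exposed --- not merely general --- face of $\gamma_Q$; this is delicate because coning transports faces but not, a priori, exposedness, and Theorem~\ref{thm:min_func_rep} recovers only the minimal exposed face at a point, which can strictly contain a non-exposed $M$. The clean way through is Corollary~\ref{cor:sub_exp}(2), which hands back an exposed face of $\gamma_Q$ whose slice is the prescribed face of $Q$; it is worth recording that in every instance relevant to this paper --- the $\ell_1$ ball and the nuclear-norm ball --- the set $Q$ is facially exposed, so ``face'' and ``exposed face'' of $Q$ coincide and no gap survives. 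The remaining verifications, namely the two identities $Q=[\gamma_Q\le1]$, $\bd Q=[\gamma_Q=1]$ and the non-minimality of the value $1$, are routine consequences of continuity of $\gamma_Q$ and the origin lying in $\inter Q$, and they are precisely what license the use of Corollary~\ref{cor:sub_exp} throughout.
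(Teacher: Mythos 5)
Your proposal is correct in substance but takes a genuinely different route from the paper's. The paper does not prove Corollary~\ref{cor: exp_g} via the sublevel-set calculus at all: its proof is Proposition~\ref{prop:gauge} in the appendix, a direct homogenization argument that, given an exposing vector $v$ of a face $F$ of $Q$, exhibits the rescaled vector $v/\langle v,\bar{x}\rangle$ as an exposing vector of $\cl\cone F$ for $\gamma_Q$, and conversely slices an exposed face of $\gamma_Q$ with the hyperplane $\{x:\langle v,x\rangle=1\}$; it also treats unbounded $Q$ carefully via the recession cone $Q^{\infty}$. Your derivation --- feeding $f=\gamma_Q$, $r=1$ into Corollary~\ref{cor:sub_exp} in exact parallel with how Corollary~\ref{cor: face_g} follows from Corollary~\ref{cor:face_sublevel} --- is shorter and more uniform with Section~2, but yields less: the appendix proof produces explicit exposing vectors and the equivalence (\ref{eqn:equiv}) between $N_Q(x)$ and $\partial\gamma_Q(x)$, which the paper reuses (e.g.\ in Corollary~\ref{cor:rep_conv_set} and Theorem~\ref{thm:epi_coh}), whereas your argument only certifies existence. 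One step of yours needs a line you did not supply: Corollary~\ref{cor:sub_exp}(2) returns an exposed face of $\epi\gamma_Q$, not of the function, so before invoking Theorem~\ref{thm:epi_coh} you must rule out horizontal exposing vectors $(v,0)$; this is where $0\in\inter Q$ earns its keep, since it makes $\gamma_Q$ finite on all of $\E$, so $\langle v,x\rangle\le 0$ for every $x\in\E$ forces $v=0$, and hence every proper exposed face of $\epi\gamma_Q$ is exposed by some $(v,-1)$ and is a graph-face. Also note that your claims that every nonzero point of $F$ generates a ray crossing $[\gamma_Q=1]$, and that $\cone M$ rather than $\cl\cone M$ suffices, rely on compactness of $Q$ --- consistent with the paper's standing restriction to gauges of compact sets, though the corollary as printed says only ``closed.'' Your observation that the printed ``iff'' with ``face of $Q$'' on the right-hand side can only close up when $Q$ is facially exposed is accurate; the appendix proposition is stated with exposed faces on both sides, and the corollary's wording is loose on exactly this point.
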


Now recall that the {\em polar set} and the {\em support function} of a convex set $Q$ are defined by 
$$Q^{o}:=\{v: \langle v,x\rangle \leq 1 \textrm{ for all } x\in Q\},$$
and 
$$h_Q(v)=\sup\{\langle v,x\rangle: x\in Q\},$$
respectively. Now for any compact convex sets $Q_1$ and $Q_2$, containing the origin in their interior, we successively deduce using \cite[Theorem~14.5]{conv_an} the equalities
$$\gamma_{{Q_1}}+\gamma_{{Q_2}}=h_{{Q_1}^{o}}+h_{{Q_2}^{o}}=h_{{Q_1}^{o}+{Q_2}^{o}}=\gamma_{({Q_1}^{o}+{Q_2}^o)^{o}}.$$ 
Thus the sum of gauges $\gamma_{{Q_1}}$ and $\gamma_{{Q_2}}$ is itself a gauge of $({Q_1}^{o}+{Q_2}^o)^{o}$. Combining this with Corollaries~\ref{cor: sum_gen}, \ref{cor: face_g}, \ref{cor: sum}, \ref{cor: exp_g}, we immediately deduce that if $Q_1$ and $Q_2$ are facially exposed, then so is  $(Q^o_1+Q^o_2)^o$, a rather surprising fact since the sum $Q^o_1+Q^o_2$ can easily fail to be facially exposed. We record this observation in the next theorem and will often appeal to it implicitly.
\begin{cor}[Sum of gauges]\label{cor:face_exp} 
Consider two compact, convex sets $Q_1$ and $Q_2$, containing the origin in their interior. Then the sum of the gauges of $Q_1$ and $Q_2$ is the gauge of $(Q^o_1+Q^o_2)^o$. Moreover, if the sets $Q_1$ and $Q_2$ are facially exposed, then the set $(Q^o_1+Q^o_2)^o$ is facially exposed as well.
\end{cor}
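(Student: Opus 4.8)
The plan is to treat the two assertions separately, the first being a routine conjugacy computation and the second carrying the real content. For the first claim I would exploit the standard duality between gauges and support functions. Since each $Q_i$ is compact and convex with the origin in its interior, the same properties pass to the polar $Q_i^{o}$, and one has $\gamma_{Q_i}=h_{Q_i^{o}}$ (because $\gamma_{Q_i}^{*}=\delta_{Q_i^{o}}$ and biconjugation applies to the lsc proper convex gauge). Adding these and invoking the fact that the support function of a Minkowski sum is the sum of support functions \cite[Theorem~14.5]{conv_an} gives $\gamma_{Q_1}+\gamma_{Q_2}=h_{Q_1^{o}}+h_{Q_2^{o}}=h_{Q_1^{o}+Q_2^{o}}$. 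Finally, since $Q_1^{o}+Q_2^{o}$ is again compact convex with the origin in its interior, the reverse duality $h_K=\gamma_{K^{o}}$ (valid for such $K$ via the bipolar theorem) rewrites the last quantity as $\gamma_{(Q_1^{o}+Q_2^{o})^{o}}$, which proves the first claim.

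For the second claim, set $K:=(Q_1^{o}+Q_2^{o})^{o}$, so that by the first part $\gamma_K=\gamma_{Q_1}+\gamma_{Q_2}$ and $\bd K=[\gamma_K=1]$. I would show that every proper face of $K$ is exposed, the improper faces being immediate. The strategy is a decompose–upgrade–recombine argument that shuttles repeatedly between faces of sets and faces of their gauges. Starting from a proper face $M$ of $K$, Corollary~\ref{cor: face_g} shows $\cone M$ is a face of $\gamma_K=\gamma_{Q_1}+\gamma_{Q_2}$, and the general facial sum rule, Corollary~\ref{cor: sum_gen}, lets me write $\cone M=G_1\cap G_2$ for faces $G_i$ of $\gamma_{Q_i}$. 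Using that faces of a gauge are conical, each $G_i$ equals $\cone M_i$, where $M_i:=G_i\cap\bd Q_i$ is a face of $Q_i$ by Corollary~\ref{cor: face_g}. Here the hypothesis enters: since $Q_i$ is facially exposed, $M_i$ is in fact an \emph{exposed} face of $Q_i$, whence Corollary~\ref{cor: exp_g} upgrades $G_i=\cone M_i$ to an exposed face of $\gamma_{Q_i}$.

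The recombination step then applies the unconditional first part of the exposed sum rule, Corollary~\ref{cor: sum}: since $G_1$ and $G_2$ are exposed faces of $\gamma_{Q_1}$ and $\gamma_{Q_2}$, their intersection $\cone M=G_1\cap G_2$ is an exposed face of $\gamma_{Q_1}+\gamma_{Q_2}=\gamma_K$. One last application of Corollary~\ref{cor: exp_g}, now to $K$, concludes that $M=\cone M\cap\bd K$ is an exposed face of $K$, as desired.

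I expect the main obstacle to be making this round trip precise, and in particular respecting the asymmetry between the two sum rules. The decomposition of $\cone M$ must be carried out with the general facial rule (Corollary~\ref{cor: sum_gen}), since only that statement is unconditional, so as to land inside each individual $\gamma_{Q_i}$ where facial exposedness can be invoked; the recombination must then use only the easy, qualification-free direction of Corollary~\ref{cor: sum}. This is exactly what lets the argument sidestep any constraint qualification on $\ri(\dom\gamma_{Q_1})\cap\ri(\dom\gamma_{Q_2})$, and it is the conceptual heart of why summing preserves facial exposedness even though the polar sum $Q_1^{o}+Q_2^{o}$ itself may fail to be facially exposed. The one auxiliary point to verify en route is that faces of a gauge are genuinely conical, so that the identity $G_i=\cone(G_i\cap\bd Q_i)$ used above is legitimate; this follows from the positive homogeneity of $\gamma_{Q_i}$ together with the fact that faces of the cone $\epi\gamma_{Q_i}$ are themselves cones.
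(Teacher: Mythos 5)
Your proposal is correct and is essentially the paper's own argument: the first claim via the identical chain $\gamma_{Q_1}+\gamma_{Q_2}=h_{Q_1^{o}}+h_{Q_2^{o}}=h_{Q_1^{o}+Q_2^{o}}=\gamma_{(Q_1^{o}+Q_2^{o})^{o}}$, and the second by combining Corollaries~\ref{cor: sum_gen}, \ref{cor: face_g}, \ref{cor: sum}, and \ref{cor: exp_g}, which are exactly the four results the paper invokes. You merely make explicit the decompose--upgrade--recombine bookkeeping (including the conicality of faces of a gauge) that the paper compresses into ``we immediately deduce.''
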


\smallskip
To illustrate, consider the following example. The symbol $\R^n$ will denote $n$-dimensional Euclidean space. The $l_p$-norm on $\R^n$ will be denoted by $\|\cdot \|_p$. 
\begin{exa}[$l_1+l_{\infty}$ norm]
{\rm
Consider the norm on $\R^n$ given by 
$$\|x\|_{1.\infty}:=\|x\|_1+\|x\|_{\infty}.$$ 
Clearly the minimal face of $\|\cdot \|_{1.\infty}$ at the origin is the origin itself. Consider now a point $\bar{x}\neq 0$. Since the $l_1$ and the $l_{\infty}$ norms are invariant under coordinate change of sign, we may suppose $\bar{x}\geq 0$. 
Define the index sets
$$\bar{I}:=\{i: \bar{x}_i=0\} \quad\textrm{ and } \quad \bar{J}:=\{i: \bar{x}_i=\|\bar{x}\|_{\infty}\}.$$ 
Then $$F:=\{x\geq 0: x_i=0 \textrm{ for each } i\in \bar{I}\}$$
is a minimal face of the $l_1$-norm at $\bar{x}$. Similarly
$$G:= \{x: x_i=\|x\|_{\infty} \textrm{ for all } i\in\bar{J} \}$$ is a minimal face of the $l_{\infty}$-norm at $\bar{x}$.
Thus $\R_{+}\{\bar{x}\}$ is an extreme ray of $\|\cdot\|_{1,\infty}$ if and only if $F\cap G$ is 1-dimensional, that is when we have $\bar{I}\cup \bar{J}=n$.  Using Corollary~\ref{cor: face_g}, one can now verify that the extreme points of the ball $\{x:\|x\|_{1,\infty} \leq 1\}$ are the points
$\prod^{k}_{i=1} \{(1+k)^{-1}\} \times \prod^n_{i=k+1} \{0\}, \textrm{ for } k=1,\ldots,n-1,$
and their images under signed permutations of coordinates; see Figure~\ref{fig:vec_norms}. 
\vspace{-10pt}
\begin{figure}[h]
        \centering
        \begin{subfigure}[b]{0.3\textwidth}
                \centering
                \includegraphics[width=\textwidth]{i}
                \caption{$l_1$-ball}
                \label{fig:gull}
        \end{subfigure}%
        ~ 
        \begin{subfigure}[b]{0.3\textwidth}
                \centering
                \includegraphics[width=\textwidth]{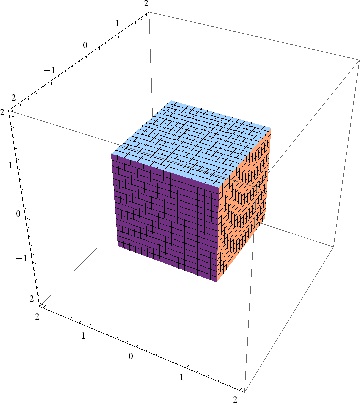}
                \caption{$l_{\infty}$-ball}
                \label{fig:tiger}
        \end{subfigure}
        ~ 
        \begin{subfigure}[b]{0.3\textwidth}
                \centering
                \includegraphics[width=\textwidth]{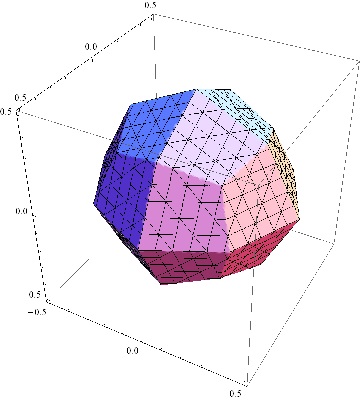}
                \caption{$(l_1+l_{\infty})$-ball}
                \label{fig:mouse}
        \end{subfigure}
       \caption{$l_1+l_{\infty}$ norm}
        \label{fig:vec_norms}
\end{figure}

}
\end{exa}


\section{Faces of the rank sparsity ball}\label{sec:rank_sp}
This section has a dual purpose: $(i)$ to shed light on the extreme points and vertices of the rank sparsity ball (see definition below) and $(ii)$ to illustrate using the rank sparsity ball how one can generally apply the facial calculus developed in the previous section to derive inequalities at the extreme points of the solution set, balancing the features of the optimization problem at hand.

We begin with some notation. The symbol  ${\bf M}^{n,m}$ will denote the space of $n\times m$-matrices, while ${\bf S}^n$ will denote the space of $n\times n$ symmetric matrices. For simplicity, in the case of ${\bf M}^{n,m}$ we will always assume $n \leq m$. We will endow ${\bf M}^{n,m}$, with the trace inner product $\langle A,B \rangle =\tr(A^{T}B)$, whose restriction is an inner product on ${\bf S}^n$.
We also define the singular value map $\sigma\colon {\bf M}^{n,m} \to \R^{n}$ taking a matrix $A$ to its vector of singular values $(\sigma_1(A),\ldots,\sigma_n(A))$ in non-increasing order. 
The group of $n\times n$ orthogonal matrices is written as ${\bf O}^n$. For any matrix $X\in {\bf M}^{n\times m}$, we consider the entry-wise $l_1$-norm $\|X\|_1:= \sum_{i,j} |X_{i,j}|$, the nuclear norm $\|X\|_*:= \sum^n_{i=1} \sigma_i(X)$, and the sum $\|X\|_{1,*}:=\|X\|_1+\theta\|X\|_{*}$ for $\theta >0$. The corresponding closed unit balls will be denoted by $\B_{1}$, $\B_{*}$, and $\B_{1,*}$, respectively. The latter is what we call the rank sparsity ball. It is clear from the previous section that the facial structure of $\|X\|_{1,*}$ does not depend on $\theta$. Consequently without loss of generality, we will set $\theta=1$ throughout. We begin the development with the following observation.
\begin{thm}[Facial exposedness]
The ball $\B_{1,*}$ is facially exposed.
\end{thm}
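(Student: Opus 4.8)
The plan is to realize $\B_{1,*}$ as the unit ball of a gauge that arises as a sum of two gauges, and then to invoke Corollary~\ref{cor:face_exp} directly. Recall that $\|\cdot\|_1$ is the gauge of $\B_1$ and $\|\cdot\|_*$ is the gauge of $\B_*$ (with $\theta=1$), both compact convex sets containing the origin in their interiors. By Corollary~\ref{cor:face_exp}, the sum $\|\cdot\|_1+\|\cdot\|_*=\gamma_{\B_1}+\gamma_{\B_*}$ is itself the gauge of the set $(\B_1^{o}+\B_*^{o})^{o}$, and this set is precisely $\B_{1,*}$. The corollary then guarantees that $\B_{1,*}$ is facially exposed provided both $\B_1$ and $\B_*$ are facially exposed.

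Thus the entire proof reduces to verifying the two hypotheses: that $\B_1$ (the $l_1$-ball in ${\bf M}^{n,m}$) and $\B_*$ (the nuclear norm ball) are each facially exposed. First I would handle $\B_1$: since the entry-wise $l_1$-norm makes ${\bf M}^{n,m}$ isometric to $\R^{nm}$ with its standard $l_1$-norm, $\B_1$ is a cross-polytope, hence polyhedral. Polyhedral sets are facially exposed --- every face of a polyhedron is exposed, a fact the paper already cites in the discussion preceding Corollary~\ref{cor:face_exp}. Second, the nuclear norm ball $\B_*$ is facially exposed; this is exactly one of the standard examples the paper lists (``polyhedral sets, the positive semi-definite cone, and the nuclear norm ball are facially exposed''), so I would cite it as a known fact rather than reprove it.

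The one genuine point requiring care is the identification $(\B_1^{o}+\B_*^{o})^{o}=\B_{1,*}$, i.e. confirming that the gauge produced by Corollary~\ref{cor:face_exp} is indeed the rank sparsity ball. This is immediate once one observes that the gauge of $\B_{1,*}$ is by definition the function $\|\cdot\|_{1,*}=\|\cdot\|_1+\|\cdot\|_*$, and that a compact convex body containing the origin in its interior is uniquely determined by its gauge; since $\gamma_{\B_1}+\gamma_{\B_*}$ and $\gamma_{\B_{1,*}}$ are the same function, their defining bodies coincide. No balancing inequality or delicate facial computation is needed here.

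I expect no serious obstacle, since this theorem is essentially a corollary of the machinery already assembled: the real content lives in Corollary~\ref{cor:face_exp}, which does the nontrivial work of showing that facial exposedness is preserved under the gauge-sum operation (surprising because $\B_1^{o}+\B_*^{o}$ itself may fail to be facially exposed). If anything is subtle, it is only bookkeeping --- making sure the hypotheses of Corollary~\ref{cor:face_exp} (compactness, origin in the interior) are met, which they plainly are for both norm balls. The proof should therefore be a short two-line application: cite facial exposedness of $\B_1$ and $\B_*$, then invoke Corollary~\ref{cor:face_exp}.
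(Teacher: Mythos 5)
Your proposal is correct and follows essentially the same route as the paper: the paper's own proof simply cites the well-known facial exposedness of $\B_1$ and $\B_*$ and invokes Corollary~\ref{cor:face_exp}. The extra details you supply (the cross-polytope observation for $\B_1$ and the identification $(\B_1^o+\B_*^o)^o=\B_{1,*}$ via uniqueness of gauges) are sound and merely make explicit what the paper leaves implicit.
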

\begin{proof}
It is well known that $\B_{1}$ and $\B_{*}$ are facially exposed. The result now follows from Corollary~\ref{cor:face_exp}. \qed
\end{proof}
Hence there is no distinction between faces and exposed faces of $\B_{1,*}$. We will use this implicitly. The following theorem characterizes the dimension of minimal faces of the rank sparsity ball, and derives simple lower bounds on this quantity in terms of the rank and sparsity of the matrix in question. 
\begin{thm}[Faces of the ball $\B_{1,*}$] \label{thm:exposed_main}{\hfill \\}
Consider a nonzero matrix $\overline{X}\in \B_{1,*}$ along with a singular value decomposition $\overline{X}=\overline{U}(\Diag\sigma(\overline{X})) \overline{V}^T$ for orthogonal matrices $\overline{U}\in {\bf O}^{n}$ and $\overline{V}\in {\bf O}^m$.  Denote by $\bar{r}$ the rank of $\overline{X}$, and let $\widehat{U}$ and $\widehat{V}$ be the restrictions of $\overline{U}$ and $\overline{V}$ to the first $\bar{r}$ columns.
Define the set $\bar{I}:=\{(i,j): \overline{X}_{i,j}=0\}$. 
Then, the minimal face $F$ of the ball $\B_{1,*}$ at $\overline{X}$ satisfies the equation
\begin{equation}\label{eqn:char_extr}
\frac{\bar{r}(\bar{r}+1)-2}{2}- \dim\spann \{\widehat{U}^T_{i,\cdot}\widehat{V}_{j,\cdot}+\widehat{V}^T_{j,\cdot}\widehat{U}_{i,\cdot} : (i,j)\in\bar{I}\}  = \dim F.
\end{equation}
In particular, any face of the ball $\B_{1,*}$ containing $\overline{X}$ satisfies the inequality
$$\frac{\bar{r}(\bar{r}+1)}{2} -|\bar{I}|\leq \dim F+1.$$
\end{thm}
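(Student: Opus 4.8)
The plan is to realize $\B_{1,*}$ as the unit sublevel set of the gauge $\gamma:=\|\cdot\|_1+\|\cdot\|_*$ and to compute the minimal face at $\overline{X}$ purely through the facial calculus of Section~\ref{sec:face}. First I would invoke Corollary~\ref{cor:face_sublevel}: since $\overline X\neq 0$, the value $1$ is not the minimum of $\gamma$, so the minimal face $F$ of $\B_{1,*}$ at $\overline X$ has the form $M\cap[\gamma=1]$, where $M$ is the minimal face of the \emph{function} $\gamma$ at $\overline X$. Because both norms are finite everywhere, the qualification condition of Corollary~\ref{cor: sum} is trivially met ($\ri\dom\|\cdot\|_1\cap\ri\dom\|\cdot\|_*=\E$), so by the sum rule $M=F_1\cap F_*$, where $F_1$ and $F_*$ are the minimal faces of $\|\cdot\|_1$ and $\|\cdot\|_*$ at $\overline X$. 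Restricted to the cone $M$ the gauge equals the linear functional $X\mapsto\langle \sign\overline X+\widehat U\widehat V^T,X\rangle$, which is positive at $\overline X$; hence the slice $[\gamma=1]$ cuts $M$ transversally and $\dim F=\dim M-1$.

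Second, I would identify the two faces explicitly. The $l_1$-face $F_1$ is the cone of matrices supported on $\supp\overline X$ with signs matching those of $\overline X$; its parallel subspace is exactly $\{X : X_{ij}=0 \text{ for } (i,j)\in\bar{I}\}$, and $\overline X$ lies in its relative interior. For $F_*$ I would use the gauge rule (Corollary~\ref{cor: exp_g}) together with the classical description of faces of the nuclear-norm ball: the minimal face of $\B_*$ at $\overline X/\|\overline X\|_*$ is $\{\widehat U C\widehat V^T : C\in{\bf S}^{\bar r},\, C\succeq 0,\, \tr C=1\}$, where the equality $\tr C=\|C\|_*$ forces $C$ to be symmetric positive semidefinite. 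Taking the conical hull gives $F_*=\{\widehat U C\widehat V^T : C\in{\bf S}^{\bar r},\, C\succeq 0\}$, with $\overline X$ again in its relative interior and $\spann F_*=\{\widehat U C\widehat V^T : C\in{\bf S}^{\bar r}\}$ of dimension $\bar r(\bar r+1)/2$.

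Third comes the dimension count, the crux of the argument. Since $\overline X\in\ri F_1\cap\ri F_*$, the relative interiors meet, so $\overline X\in\ri(F_1\cap F_*)$ and the parallel subspace of $M$ equals $\spann F_1\cap\spann F_*$. Transporting to the coordinate $C$ through the isometry $C\mapsto\widehat U C\widehat V^T$, this subspace corresponds to $\{C\in{\bf S}^{\bar r} : (\widehat U C\widehat V^T)_{ij}=0 \text{ for all } (i,j)\in\bar{I}\}$. The key identity is $(\widehat U C\widehat V^T)_{ij}=\langle C,\widehat U^T_{i,\cdot}\widehat V_{j,\cdot}\rangle=\tfrac12\langle C,\widehat U^T_{i,\cdot}\widehat V_{j,\cdot}+\widehat V^T_{j,\cdot}\widehat U_{i,\cdot}\rangle$, where the symmetrization is legitimate precisely because $C$ ranges over symmetric matrices. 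Consequently this subspace is the orthogonal complement in ${\bf S}^{\bar r}$ of $\spann\{\widehat U^T_{i,\cdot}\widehat V_{j,\cdot}+\widehat V^T_{j,\cdot}\widehat U_{i,\cdot} : (i,j)\in\bar{I}\}$, so $\dim M=\tfrac{\bar r(\bar r+1)}{2}-\dim\spann\{\widehat U^T_{i,\cdot}\widehat V_{j,\cdot}+\widehat V^T_{j,\cdot}\widehat U_{i,\cdot} : (i,j)\in\bar{I}\}$, and subtracting $1$ for the level-set slice yields exactly \eqref{eqn:char_extr}.

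Finally, the displayed inequality is immediate: a span of the $|\bar{I}|$ matrices indexed by $\bar{I}$ has dimension at most $|\bar{I}|$, so \eqref{eqn:char_extr} gives $\dim F\geq \tfrac{\bar r(\bar r+1)-2}{2}-|\bar{I}|$, which rearranges to $\tfrac{\bar r(\bar r+1)}{2}-|\bar{I}|\leq\dim F+1$. I expect the main obstacle to be the relative-interior bookkeeping — verifying that $\overline X$ lies in $\ri(F_1\cap F_*)$ so that $\dim M$ genuinely equals the dimension of the intersection of the two parallel subspaces — together with the symmetrization step that replaces the rank-one matrices $\widehat U^T_{i,\cdot}\widehat V_{j,\cdot}$ by their symmetric counterparts; once the two minimal faces have been correctly identified, the remaining computation is routine.
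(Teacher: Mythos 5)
Your proposal is correct and follows essentially the same route as the paper's proof: identify the minimal faces of $\|\cdot\|_1$ and $\|\cdot\|_*$ at $\overline{X}$ (the sign-pattern cone and $\{\widehat{U}A\widehat{V}^T : A\in {\bf S}^{\bar r}_+\}$), intersect them via the facial sum rule (Corollary~\ref{cor: sum_gen}) with the relative-interior argument of Rockafellar's Theorem~6.5, use the identity $(\widehat U C\widehat V^T)_{ij}=\tfrac12\langle C,\widehat U^T_{i,\cdot}\widehat V_{j,\cdot}+\widehat V^T_{j,\cdot}\widehat U_{i,\cdot}\rangle$ with rank--nullity, and lose one dimension passing from the cone to the ball. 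The only cosmetic differences are that you invoke the qualification condition of the exposed-face sum rule (Corollary~\ref{cor: sum}) where the general facial sum rule needs none, and you make explicit the slicing argument that the paper delegates to Corollary~\ref{cor: face_g}.
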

\begin{proof}
It follows from \cite[Example~5.6]{Sa_ball} and Corollary~\ref{cor: face_g} that a subset $F\subset\R^n$ is a face of the nuclear norm $\|\cdot\|_{*}$ if and only if it has the form
\[\Big\{U\left[ \begin{array}{cc}
A & 0 \\
0 & 0 
\end{array} \right] V^T: A\in {\bf S}^{k}_{+}\Big\}\] 
for $k=1,\ldots, n$ and orthogonal matrices $U\in {\bf O}^{n}$ and $V\in {\bf O}^m$. 
Clearly then $\overline{X}$ is contained in the relative interior of the face 
\[F:= \Big\{ \overline{U}\left[ \begin{array}{cc}
A & 0 \\
0 & 0 
\end{array} \right] \overline{V}^T: A\in {\bf S}^{\bar{r}}_{+}\Big\},\]
thereby making it a minimal face of $\|\cdot\|_{*}$ at $\overline{X}$. Let $\widetilde{F}$ be the minimal face of 
$\|\cdot\|_{1,*}$ at $\overline{X}$.
Then using Corollary~\ref{cor: sum_gen} and \cite[Theorem~6.5]{con_ter}, we deduce
that the affine span of $\widetilde{F}$ is the set
\[\Big\{X\in {\bf M}^{n\times m}: X_{i,j}=0 \textrm{ for all } (i,j)\in \bar{I}\Big\} \bigcap \Big\{ \overline{U}\left[ \begin{array}{cc}
A & 0 \\
0 & 0 
\end{array} \right] \overline{V}^T: A\in {\bf S}^{\bar{r}}\Big\}.\] 
Observe now that for any $A\in {\bf S}^{\bar{r}}$, we have
$$e_i^{T}\overline{U}\left[ \begin{array}{cc}
A & 0 \\
0 & 0 
\end{array} \right] \overline{V}^T e_j=\tr(\widehat{U}_{i,\cdot} A \widehat{V}^T_{j,\cdot})=\langle A, \widehat{V}^T_{j,\cdot}\widehat{U}_{i,\cdot} \rangle = \frac{1}{2}\langle A, \widehat{U}^T_{i,\cdot}\widehat{V}_{j,\cdot}+\widehat{V}^T_{j,\cdot}\widehat{U}_{i,\cdot} \rangle.$$
Applying the classical rank-nullity theorem, we deduce
$$\frac{\bar{r}(\bar{r}+1)}{2}- \dim\spann \{ \widehat{U}^T_{i,\cdot}\widehat{V}_{j,\cdot}+\widehat{V}^T_{j,\cdot}\widehat{U}_{i,\cdot}  : (i,j)\in\bar{I}\}  = \dim \widetilde{F}.$$
On the other hand, observe 
$$\dim\spann \{ \widehat{U}^T_{i,\cdot}\widehat{V}_{j,\cdot}+\widehat{V}^T_{j,\cdot}\widehat{U}_{i,\cdot}  : (i,j)\in\bar{I}\} \leq |\bar{I}| .$$
Applying Corollary~\ref{cor: face_g}, the result follows.\qed
\end{proof}

\begin{exa}
{\rm
Consider the rank two matrix $\overline{X}\subset {\bf M}^{3\times 3}$ defined by
\[\left[ \begin{array}{ccc}
1 & 1 &0 \\
0 & 1 & 1 \\
0 & 0 & 0
\end{array} \right] =  \left[ \begin{array}{ccc}
\frac{1}{\sqrt{2}} & -\frac{1}{\sqrt{2}} &0 \\
\frac{1}{\sqrt{2}} & \frac{1}{\sqrt{2}} & 0 \\
0 & 0 & 1
\end{array} \right]
\left[ \begin{array}{ccc}
\sqrt{3} & 0 &0 \\
0 & 1 & 0 \\
0 & 0 & 0
\end{array} \right]
\left[ \begin{array}{ccc}
\frac{1}{\sqrt{6}} & -\frac{1}{\sqrt{2}} & \frac{1}{\sqrt{3}} \\
\sqrt{\frac{2}{3}} & 0 & -\frac{1}{\sqrt{3}} \\
\frac{1}{\sqrt{6}} & \frac{1}{\sqrt{2}} & \frac{1}{\sqrt{3}}
\end{array} \right]^T
\] 
Then the matrices 
\[\left[ \begin{array}{cc}
\frac{1}{\sqrt{2}} & -\frac{1}{\sqrt{2}}
\end{array} \right]^T \left[ \begin{array}{cc}
\frac{1}{\sqrt{6}} & \frac{1}{\sqrt{2}}
\end{array} \right]+
\left[ \begin{array}{cc}
\frac{1}{\sqrt{6}} & \frac{1}{\sqrt{2}}
\end{array} \right]^T
\left[ \begin{array}{cc}
\frac{1}{\sqrt{2}} & -\frac{1}{\sqrt{2}}
\end{array} \right]= \left[ \begin{array}{cc}
\frac{1}{\sqrt{3}} & \frac{1}{2}-\frac{1}{\sqrt{12}}  \\
\frac{1}{2}-\frac{1}{\sqrt{12}}  & -1
\end{array} \right]
\]
and 
\[\left[ \begin{array}{cc}
\frac{1}{\sqrt{2}} & \frac{1}{\sqrt{2}}
\end{array} \right]^T \left[ \begin{array}{cc}
\frac{1}{\sqrt{6}} & -\frac{1}{\sqrt{2}}
\end{array} \right]+
\left[ \begin{array}{cc}
\frac{1}{\sqrt{6}} & -\frac{1}{\sqrt{2}}
\end{array} \right]^T
\left[ \begin{array}{cc}
\frac{1}{\sqrt{2}} & \frac{1}{\sqrt{2}}
\end{array} \right]= \left[ \begin{array}{cc}
\frac{1}{\sqrt{3}} & \frac{1}{\sqrt{12}} -\frac{1}{2} \\
\frac{1}{\sqrt{12}}-\frac{1}{2} & -1
\end{array} \right]
\] 
are linearly independent. It follows from equation (\ref{eqn:char_extr}) of Theorem~\ref{thm:exposed_main} that $\overline{X}$, up to a rescaling, is an extreme point of the ball $\B_{1,*}$.
On the other hand, a similar computation shows that the matrix 
\[\left[ \begin{array}{ccc}
1 & 1 &0 \\
0 & 1 & 0 \\
0 & 0 & 0
\end{array} \right]\]
is not an extreme point of $\B_{1,*}$ under any scaling. This is noteworthy since this matrix has the same rank as $\overline{X}$, while being more sparse, and therefore is ``preferable'' to $\overline{X}$, even though it fails to be extreme.
}
\end{exa}

As a direct consequence of the previous theorem, we now prove that when using the joint norm $\|\cdot\|_{1,*}$ to recover a point satisfying linear measurements, there is an implicit relationship at any extreme point of the solution set between the rank, sparsity, and the number of linear measurements. In contrast to the usual compressed sensing results, this relationship is absolute, being independent of noise.
\begin{thm}[Sparse low rank solutions of a linear system]\label{thm:gabor}\hfill \\
Consider the optimization problem 
\begin{align*}
\min~ &\|X\|_1 +\theta \|X\|_{*} \\
s.t.~~ &\mathcal{A}(X)=b,
\end{align*}
where $\mathcal{A}\colon {\bf M}^{n\times m}\to\R^d$ is a linear operator and $\theta$ is a strictly positive constant. Then any extreme point $\overline{X}$ of the solution set satisfies the inequality
$$\frac{\bar{r}(\bar{r}+1)}{2}-|\bar{I}| \leq 1 +d,$$
where $\bar{r}$ is the rank of $\overline{X}$ and $\bar{I}$ is the index set of the zero entries of $\overline{X}$.
\end{thm}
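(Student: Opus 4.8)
The plan is to express the solution set as the intersection of the feasible affine subspace with a single sublevel set of the joint norm, and then to combine Theorem~\ref{thm:exposed_main} with a dimension count. Write $f:=\|\cdot\|_{1,*}$, let $\rho$ denote the optimal value of the problem, and set $L:=\mathcal{A}^{-1}(b)$. Since $f$ is a norm, its sublevel set is a rescaled ball, $[f\leq\rho]=\rho\,\B_{1,*}$, and because every feasible point has objective value at least $\rho$, the solution set is exactly
\[
S \;=\; L\cap \rho\,\B_{1,*}.
\]
If $\overline{X}=0$ the claimed inequality reads $-|\bar{I}|\leq 1+d$, which is trivial, so I would assume $\overline{X}\neq 0$ and hence $\rho>0$; this guarantees that scaling by $1/\rho$ is a linear bijection under which faces, ranks, and sparsity patterns are all preserved.

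The heart of the argument is to produce a single face of the ball that pins $\overline{X}$ down inside $L$. Since $L$ is affine, its only nonempty face is $L$ itself, and $\overline{X}\in\ri L=L$. Let $H$ be the minimal face of $\rho\,\B_{1,*}$ at $\overline{X}$, so that $\overline{X}\in\ri H$. Then $\overline{X}\in\ri L\cap\ri H$, and I would invoke \cite[Theorem~6.5]{con_ter} to get $\ri(L\cap H)=\ri L\cap\ri H\ni\overline{X}$, together with Dubins' theorem \cite[Page 116]{SW} to see that $L\cap H$ is a face of $S$. Being a face of $S$ that contains $\overline{X}$ in its relative interior, $L\cap H$ must therefore be the minimal face of $S$ at $\overline{X}$. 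But $\overline{X}$ is an extreme point of $S$, so this minimal face is the singleton $\{\overline{X}\}$, forcing $L\cap H=\{\overline{X}\}$.

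It then remains to bound $\dim H$ and appeal to Theorem~\ref{thm:exposed_main}. From $\overline{X}\in\ri L\cap\ri H$ one has $\aff(L\cap H)=\aff L\cap\aff H$ (again a consequence of \cite[Theorem~6.5]{con_ter}), so $\aff L\cap\aff H=\{\overline{X}\}$; passing to the parallel linear subspaces through the origin gives $\dim L+\dim H\leq nm$. Since $L$ is the nonempty solution set of $\mathcal{A}(X)=b$ we have $\dim L = nm-\rank\mathcal{A}\geq nm-d$, whence $\dim H\leq d$. Finally, rescaling by $1/\rho$ turns $H$ into a face of $\B_{1,*}$ of the same dimension containing $\overline{X}/\rho$, a matrix of rank $\bar{r}$ with zero pattern $\bar{I}$; Theorem~\ref{thm:exposed_main} then yields $\tfrac{\bar{r}(\bar{r}+1)}{2}-|\bar{I}|\leq \dim H+1\leq 1+d$, as desired. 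I expect the main obstacle to be the relative-interior bookkeeping in the middle step — namely, justifying that $L\cap H$ is \emph{precisely} the minimal face of $S$ at $\overline{X}$ (not merely a face containing it) and that the intersection of the affine hulls collapses to a point — since the entire dimension count, and hence the appearance of $d$ on the right-hand side, rests on these identities holding.
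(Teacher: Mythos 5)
Your proposal is correct and follows essentially the same route as the paper's proof: the paper likewise writes the solution set as $\mathcal{L}\cap\B_{1,*}$ (after normalizing the optimal value to $1$), notes that the minimal face $F$ of the ball at $\overline{X}$ satisfies $\{\overline{X}\}=F\cap\mathcal{L}$, deduces $\dim F+\dim\mathcal{L}\leq mn$, and concludes via Theorem~\ref{thm:exposed_main}. The only difference is that you spell out the steps the paper treats as immediate --- Dubins' theorem, the relative-interior argument identifying $L\cap H$ as the \emph{minimal} face, and the affine-hull dimension count --- which is exactly the bookkeeping needed to make the paper's terse deduction rigorous.
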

\begin{proof}
The constant $\theta$ will not play a role in the argument, and so we will assume $\theta=1$. Let $c$ be the optimal value of this problem. If this value is zero, then the inequality is trivially true. Hence we suppose $c >0$. The exact value of $c$ will not play a role and so for notational convenience we assume $c=1$. Define now the set $\mathcal{L}:=\{X: \mathcal{A}(X)=b\}$. Then the solution set is simply
$\B_{1,*}\cap \mathcal{L}$. Moreover, denoting the minimal face of $\B_{1,*}$ at $\overline{X}$ by $F$, we have $\{\overline{X}\}=F\cap \mathcal{L}$. We immediately deduce, 
$\dim F +\dim \mathcal{L} \leq mn$,
which using Corollary~\ref{thm:exposed_main} implies the inequality, $\frac{\bar{r}(\bar{r}+1)}{2}-|\bar{I}| \leq 1 +d,$
as claimed.
\qed
\end{proof}

\begin{rem}{\rm
We note that the extreme point inequality in Theorem~\ref{thm:gabor}, also holds with $d$ replaced by the dimension of the range of $\A$.}
\end{rem}

We next consider the vertices of the ball $\B_{1,*}$. Recall that a point $\bar{x}$ of a convex set $Q$ is a {\em vertex} if the normal cone $N_Q(\bar{x})$ is full-dimensional. In particular, the set of exposing vectors of such points has nonzero Lebesgue measure. It is standard that the vertices of $\B_1$ are simply its extreme points, while the ball $\B_{*}$ has no vertices. We will shortly see that remarkably the rank sparsity ball $B_{1,*}$ has no ``new'' vertices, that is all of its vertices  are simply the extreme points of $\frac{1}{2}\B_{1}$. The following lemma is key, and may be of independent interest. It establishes certain lower-bounds on the size of the set of all matrices with a prescribed rank and sparsity pattern.

\begin{lem}[Sparsity-rank intersections]\label{lem:sparse_rank}\hfill \\
Consider a matrix $\overline{X}\in{\bf M}^{n\times m}$ and let $r=\rank \overline{X}$ and $\bar{I}:=\{(i,j):\overline{X}_{i,j}=0\}$. Define a bipartite graph $\mathcal{G}$ on $n\times m$ vertices with the edge set $\bar{I}^c$, and denote by $c(\mathcal{G})$ the number of connected components of $\mathcal{G}$. Then there exists a ${\bf C}^{\infty}$ manifold $\mathcal{M}$ of dimension $n+m-c(\mathcal{G})$ satisfying
\begin{equation}\label{eqn:inclusion_man}
\overline{X}\in\mathcal{M}\subset \{X: \rank X = r ~\textrm{ and }~ X_{i,j}=0 \textrm{ for all } (i,j)\in \bar{I}\}.
\end{equation}
Moreover, letting $\alpha$ be the number of nonzero rows of $\overline{X}$ and $\beta$ be the number of nonzero columns of $\overline{X}$, there exists a linear subspace $ \mathcal{V}$ of  ${\bf M}^{n\times m}$ satisfying  
\begin{equation}\label{eqn:inclusion}
\overline{X}\in\mathcal{V}\subset \cl \mathcal{M} \subset \{X: \rank X \leq r ~\textrm{ and }~ X_{i,j}=0 \textrm{ for all } (i,j)\in \bar{I}\},
\end{equation}
and having $\dim\mathcal{V}\geq \max\{\alpha,\beta\}$.
\end{lem}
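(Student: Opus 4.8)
The plan is to build both $\mathcal{M}$ and $\mathcal{V}$ from the two-sided diagonal scaling action on $\overline{X}$, which automatically preserves the sparsity pattern and, for nonzero scalars, the rank. For $d\in(\R\setminus\{0\})^n$ and $e\in(\R\setminus\{0\})^m$ write $D_1=\Diag(d)$ and $D_2=\Diag(e)$, and note $(D_1\overline{X}D_2)_{i,j}=d_ie_j\overline{X}_{i,j}$, so this map leaves the zero set $\bar I$ unchanged and preserves $\rank$ (it scales rows and columns by nonzero factors). Hence the orbit
$$\mathcal{M}:=\{D_1\overline{X}D_2:\ d\in(\R\setminus\{0\})^n,\ e\in(\R\setminus\{0\})^m\}$$
contains $\overline{X}$ and lies in $\{X:\rank X=r,\ X_{i,j}=0\ \forall(i,j)\in\bar I\}$. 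Being the orbit of the smooth abelian group $G=(\R\setminus\{0\})^{n}\times(\R\setminus\{0\})^{m}$, it is a $C^{\infty}$ (immersed) submanifold of dimension $\dim G-\dim G_{\overline X}$, where $G_{\overline X}$ is the isotropy subgroup at $\overline X$.

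The dimension count is the crux. Passing to logarithms of absolute values, the continuous part of $G_{\overline X}$ is cut out by the relations $s_i+t_j=0$ for every edge $(i,j)\in\bar I^c$; that is, it is the kernel of the linear ``edge-sum'' map $L(s,t)=(s_i+t_j)_{(i,j)\in\bar I^c}$. I would then show $\dim\ker L=c(\mathcal{G})$: on each connected component of $\mathcal{G}$ the relations $s_i=-t_j$ propagate along paths, forcing all row-vertices of the component to share a common value and all column-vertices its negation, leaving exactly one free parameter per component (an isolated vertex, coming from a zero row or column, contributing its own free parameter). Hence $\dim G_{\overline X}=c(\mathcal{G})$ and $\dim\mathcal{M}=(n+m)-c(\mathcal{G})$, giving \eqref{eqn:inclusion_man}. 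The same computation shows that, up to the finitely many sign components of $G$, $\mathcal{M}$ is diffeomorphic to $\mathrm{image}(L)$ via coordinatewise exponentiation, which pins down the dimension directly.

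For the linear subspace I would take, assuming $\alpha\geq\beta$ (otherwise repeat the argument with columns), the row-scaling family
$$\mathcal{V}:=\{D_1\overline{X}:\ d\in\R^{n}\}=\spann\{E_{ii}\overline{X}:\ \textrm{row } i \textrm{ of } \overline X\textrm{ is nonzero}\}.$$
The matrices $E_{ii}\overline X$ have pairwise disjoint single-row supports, so $\mathcal{V}$ is a linear subspace of dimension exactly $\alpha=\max\{\alpha,\beta\}$, and $\overline X\in\mathcal V$ (take $d=(1,\ldots,1)$). Every $D_1\overline X$ with all $d_i\neq 0$ equals $D_1\overline X I\in\mathcal M$, and an arbitrary $D_1\overline X$ is a limit of such matrices (perturb the vanishing scalars to be nonzero and let them tend back to $0$); hence $\mathcal V\subset\cl\mathcal M$. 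Finally $\cl\mathcal M\subset\{X:\rank X\le r,\ X_{i,j}=0\ \forall(i,j)\in\bar I\}$ because the right-hand set is closed and contains $\mathcal M$. This yields \eqref{eqn:inclusion}.

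The main obstacle is twofold. The delicate part of the dimension count is the identification $\dim\ker L=c(\mathcal{G})$, which is exactly where bipartite connectivity enters and must be handled carefully, including the isolated vertices produced by zero rows or columns. The second, easy-to-miss point is that the scalings defining $\mathcal{M}$ must be allowed to be negative, not merely positive: the closure of the positive orbit contains no line through $\overline{X}$ (already a strictly positive rank-one matrix has no nonnegative negative multiple), so with positive scalings $\cl\mathcal{M}$ could not contain the linear subspace $\mathcal{V}$ demanded by \eqref{eqn:inclusion}.
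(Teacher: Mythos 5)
Your proposal is correct and follows essentially the same route as the paper: both construct $\mathcal{M}$ as the orbit of $\overline{X}$ under two-sided diagonal scaling by nonzero factors, obtain the dimension $n+m-c(\mathcal{G})$ from the kernel of the linear system $s_i+t_j=0$ indexed by the edges of $\mathcal{G}$ (your stabilizer Lie algebra is exactly the paper's kernel of the differential of $(u,v)\mapsto \mathrm{Diag}(u)\overline{X}\mathrm{Diag}(v)$ at the identity), and take $\mathcal{V}$ to be the row-scaling (or column-scaling) subspace of dimension $\max\{\alpha,\beta\}$. The only cosmetic difference is that you invoke the orbit--stabilizer theorem where the paper uses a constant-rank argument combined with smoothness of semialgebraic Lie group orbits.
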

\begin{proof}
Consider the Lie group action of $GL(n)\times GL(m)$ on ${\bf M}^{n\times m}$ defined by 
$\theta_{(U,V)}(X) := U X V^T$. 
Restricting this action to diagonal matrices, we obtain an action of $(\R\setminus \{0\})^{n}\times (\R\setminus \{0\})^{m}$ on ${\bf M}^{n\times m}$ defined by $\widehat{\theta}_{(u,v)}(X) := \Diag(u) X \Diag(v)$. Let $\mathcal{M}$ be the orbit of $\overline{X}$ under $\widehat{\theta}$, namely set 
$$\mathcal{M}:=\{\Diag(u) \overline{X} \Diag(v): u\in (\R\setminus \{0\})^{n}\textrm{ and } v\in(\R\setminus \{0\})^{m} \}.$$
Clearly inclusions \eqref{eqn:inclusion_man} hold.
By \cite[Proposition~7.26]{Lee2}, we deduce that the mapping $F(u,v):= \Diag(u) \overline{X} \Diag(v)$ has constant rank.
Moreover, since the orbits of semi-algebraic Lie group actions are always ${\bf C}^{\infty}$-smooth manifolds (see \cite[Theorem B4]{Gib}), we deduce that $\mathcal{M}$ is a ${\bf C}^{\infty}$-smooth manifold with dimension equal to the rank of the linear operator $DF(e,e)\colon \R^n\times\R^m \to {\bf M}^{n\times m}$. Observe, on the other hand, that we have
$$DF(e,e)(v,w)= \Diag(v)\overline{X}+\overline{X}\Diag(w).$$
Hence equality $DF(e,e)(v,w)=0$ holds if and only if  we have $u_i=-v_j$ for all $(i,j)\notin \bar{I}$.  
It follows immediately that the kernel of the operator $DF(e,e)(v,w)$ has dimension $c(\mathcal{G})$, and therefore that $\mathcal{M}$ is $n+m-c(\mathcal{G})$ dimensional, as claimed.

Now let $\mathcal{R}$ consist of all indices $i$ such that the $i$'th row of $\overline{X}$ is nonzero. 
Choose an arbitrary index $i^{*}\in\mathcal{R}$ and define a vector $v_{i^{*}}=e\in\R^n$. For each index $i\in \mathcal{R}\setminus \{i^{*}\}$, choose a vector $v\in\R^n$ so that the vectors $\{v_i[\mathcal{R}]\}_{i\in\mathcal{R}}$ are linearly independent. Now for each index $i\in \mathcal{R}$,  define a matrix $B_i:=\Diag(v_i) \overline{X}$ and let $\mathcal{V}:=\spann \{B_i: i\in\mathcal{R}\}$. Clearly inclusions (\ref{eqn:inclusion}) hold.
We claim that the matrices $B_i$ are all linearly independent. Indeed suppose there are numbers $\lambda_i$ for $i\in\mathcal{R}$ satisfying $$0=\sum_{i\in\mathcal{R}} \lambda_i B_i= \Diag \Big(\sum_{i\in\mathcal{R}} \lambda_i v_i\Big)\overline{X}.$$
Hence $\sum_{i\in\mathcal{R}} \lambda_i v_i[\mathcal{R}]=0$, and we conclude $\lambda_i=0$ for each index $i\in\mathcal{R}$. Applying an 
analogous argument to $\overline{X}^T$, the result follows.\qed
\end{proof}

\begin{thm}[Vertices of the ball $\B_{1,*}$] \label{thm:vert}{\hfill \\}
The vertices of the ball $\B_{1,*}$ are simply the extreme points of $\frac{1}{2}\B_{1}$, that is 
matrices having all zero entries except for one entry whose value is $\pm\frac{1}{2}$.
\end{thm}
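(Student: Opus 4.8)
The plan is to translate the geometric notion of a vertex into a purely linear-algebraic condition on the subdifferential $\partial\|\cdot\|_{1,*}(\overline{X})$, and then to feed in Lemma~\ref{lem:sparse_rank} to produce, whenever $\overline{X}$ has more than one nonzero entry, a line in the tangent cone that rules out full-dimensionality of the normal cone. First I would record that for a nonzero $\overline{X}$ on the boundary (normalized so $\|\overline{X}\|_{1,*}=1$) the normal cone is $N_{\B_{1,*}}(\overline{X})=\R_+\,\partial\|\cdot\|_{1,*}(\overline{X})$, and since this subdifferential is compact, convex, and avoids the origin, its conic hull is full-dimensional precisely when $\spann \partial\|\cdot\|_{1,*}(\overline{X})$ is all of ${\bf M}^{n\times m}$. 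Thus $\overline{X}$ is a vertex if and only if $\partial\|\cdot\|_{1,*}(\overline{X})$ linearly spans the whole space.

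Next I would compute this span. Using the classical subdifferential sum rule \cite[Theorem~23.8]{con_ter}, $\partial\|\cdot\|_{1,*}(\overline{X})=\partial\|\cdot\|_1(\overline{X})+\partial\|\cdot\|_*(\overline{X})$, and I would record the two pieces explicitly: $\partial\|\cdot\|_1(\overline{X})$ is the box fixing the sign matrix $\Sigma$ of $\overline{X}$ on its support and ranging over $[-1,1]$ on $\bar{I}$, so its direction space is $\mathcal{Z}:=\{W:\supp W\subseteq\bar{I}\}$; while $\partial\|\cdot\|_*(\overline{X})=\widehat{U}\widehat{V}^T+\{W:\widehat{U}^TW=0,\ W\widehat{V}=0,\ \|W\|_2\le1\}$, whose direction space is $\mathcal{W}:=\{W:(I-\widehat{U}\widehat{U}^T)W(I-\widehat{V}\widehat{V}^T)=W\}$. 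Hence $\spann\partial\|\cdot\|_{1,*}(\overline{X})=\mathcal{Z}+\mathcal{W}+\R(\Sigma+\widehat{U}\widehat{V}^T)$, and taking orthogonal complements gives the clean criterion: $\overline{X}$ \emph{fails} to be a vertex if and only if $\mathcal{S}\cap\mathcal{T}\cap(\Sigma+\widehat{U}\widehat{V}^T)^\perp\neq\{0\}$, where $\mathcal{S}=\mathcal{Z}^\perp$ is the space of matrices supported on $\supp\overline{X}$ and $\mathcal{T}=\mathcal{W}^\perp=\{Y:(I-\widehat{U}\widehat{U}^T)Y(I-\widehat{V}\widehat{V}^T)=0\}$ is exactly the tangent space at $\overline{X}$ to the manifold of rank-$\bar{r}$ matrices.

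This is where Lemma~\ref{lem:sparse_rank} enters. The linear subspace $\mathcal{V}$ it produces satisfies $\overline{X}\in\mathcal{V}\subseteq\cl\mathcal{M}\subseteq\{X:\rank X\le\bar{r},\ \supp X\subseteq\supp\overline{X}\}$, so for every $Y\in\mathcal{V}$ the line $\overline{X}+tY$ stays inside this set; since $\overline{X}$ is a smooth point of $\{\rank\le\bar{r}\}$ with tangent space $\mathcal{T}$, it follows that $Y\in\mathcal{S}\cap\mathcal{T}$, i.e. $\mathcal{V}\subseteq\mathcal{S}\cap\mathcal{T}$. Because $\dim\mathcal{V}\ge\max\{\alpha,\beta\}$, intersecting $\mathcal{V}$ with the single hyperplane $(\Sigma+\widehat{U}\widehat{V}^T)^\perp$ leaves a subspace of dimension at least $\max\{\alpha,\beta\}-1$.

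Finally I would conclude. If $\overline{X}$ has at least two nonzero entries, then it has either two nonzero rows or two nonzero columns, so $\max\{\alpha,\beta\}\ge 2$ and the intersection above contains a nonzero $Y$; hence $\overline{X}$ is not a vertex. Therefore every vertex has a single nonzero entry, i.e. $\overline{X}=\pm\frac{1}{2}e_ie_j^T$ after normalization, which are precisely the extreme points of $\frac{1}{2}\B_1$. For the converse I would verify directly that each such matrix is a vertex: here $\mathcal{S}=\R\,e_ie_j^T$ and $\Sigma+\widehat{U}\widehat{V}^T=2e_ie_j^T$, so $\mathcal{S}\cap(\Sigma+\widehat{U}\widehat{V}^T)^\perp=\{0\}$ at once. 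The main obstacle is the middle step: computing the exact span of $\partial\|\cdot\|_{1,*}(\overline{X})$ and, crucially, recognizing $\mathcal{W}^\perp$ as the rank-$\bar{r}$ tangent space, so that the subspace $\mathcal{V}$ of Lemma~\ref{lem:sparse_rank} slots precisely into $\mathcal{S}\cap\mathcal{T}$; the bound $\dim\mathcal{V}\ge\max\{\alpha,\beta\}$ is exactly what survives the single extra orthogonality constraint.
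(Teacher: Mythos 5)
Your proof is correct, but it reaches the conclusion by a genuinely different mechanism than the paper's. The paper never computes $\spann \partial\|\cdot\|_{1,*}(\overline{X})$; instead it proves a local rigidity statement (equation~(\ref{eqn:inter_main})): at a vertex, the set of matrices with the same rank and sparsity pattern coincides locally with the ray $\R_{++}\{\overline{X}\}$. This is established by contradiction, using inner semicontinuity of $\partial\|\cdot\|_1$ restricted to the fixed-sparsity subspace and of $\partial\|\cdot\|_*$ restricted to the fixed-rank manifold: distinct nearby points with the same pattern would produce normals converging to a vector in $\inter N_{\B_{1,*}}(\overline{X})$, which is impossible since an interior normal exposes $\overline{X}$ strictly. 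Lemma~\ref{lem:sparse_rank}'s subspace $\mathcal{V}$, together with lower semicontinuity of rank, then violates this one-dimensionality unless $\max\{\alpha,\beta\}\leq 1$. You bypass the local ray property entirely: the exact sum rule plus the explicit formulas for $\partial\|\cdot\|_1$ (the box) and $\partial\|\cdot\|_*$ (Watson's formula) reduce vertexhood to the transversality condition $\mathcal{S}\cap\mathcal{T}\cap(\Sigma+\widehat{U}\widehat{V}^T)^\perp=\{0\}$, and you then slot $\mathcal{V}$ into $\mathcal{S}\cap\mathcal{T}$ using the same rank-lsc observation, phrased as $\overline{X}$ being a smooth point of the determinantal variety $\{X:\rank X\leq \bar{r}\}$. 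Both arguments lean on Lemma~\ref{lem:sparse_rank} in the same way --- the bound $\dim\mathcal{V}\geq\max\{\alpha,\beta\}$ is what eliminates matrices with two or more nonzero entries --- and both verify the easy direction by direct inspection. What your route buys: no limiting argument and no appeal to inner semicontinuity of subdifferential mappings (the most delicate ``standard'' fact the paper invokes), plus a single checkable criterion that settles both the vertex and non-vertex directions at once. What the paper's route buys: it needs no explicit subdifferential formulas, so it adapts to sums of norms where a Watson-type description is unavailable, and the local ray property~(\ref{eqn:inter_main}) is a geometric fact of independent interest.
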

\begin{proof}
First observe that  a matrix $\overline{X}$ is a vertex of the ball $\B_{1,*}$ if and only if the equation $\|\overline{X}\|_{1,*}= 1$ holds and the set $\partial\|\cdot\|_{1,*}(\overline{X})$ has dimension $n-1$. Consequently any matrix having all zero entries except for one entry whose value is $\pm\frac{1}{2}$ is a vertex of $\B_{1,*}$. We will now show that these are the only vertices of this ball. To this end, suppose that $\overline{X}$ is a vertex of $\B_{1,*}$, and define $r=\rank \overline{X}$ and $\bar{I}:=\{(i,j):\overline{X}_{i,j}=0\}$.

We claim that the equation
\begin{equation}\label{eqn:inter_main}
\{X: \rank X = r ~\textrm{ and }~ X_{i,j}=0 \textrm{ for all } (i,j)\in \bar{I}\} = \R_{++}\{\overline{X}\},
\end{equation}
holds locally around $\overline{X}$. To see this, suppose not. Then there exists a sequence $X^k$ with $X^k\notin \R_{++}\{\overline{X}\}$ for all $k$, and satisfying 
$X^{k}\to\overline{X}$, $\rank X^k = r$, and $X^k_{i,j}=0$ for all $(i,j)\in\overline{I}$.
Choose a vector $\overline{V}\in\ri \partial \|\cdot\|_{1,*}(\overline{X})$. It is standard that the set-valued mapping $X\mapsto \partial \|\cdot\|_1(X)$ is inner-semicontinuous at $\overline{X}$ relative to the linear space $\{X: X_{i,j}=0 \textrm{ for all } (i,j)\in \bar{I}\}$. Similarly $X\mapsto \partial \|\cdot\|_{*}(X)$ is inner-semicontinuous at $\overline{X}$ relative to the manifold $\{X: \rank X = r\}$. It follows that  there exists a sequence $V^k\in \partial \|\cdot \|_{1,*}(X^k)$ converging to $\overline{V}$. Hence, the points $\frac{1}{\|X^k\|_{1,*}}X^k$ converge to $\overline{X}$ and the vectors $V^k\in N_{\B_{1,*}}\big(\frac{1}{\|X^k\|_{1,*}}X^k\big)$ converge to the vector $\overline{V}$ lying in the interior of $N_{\B_{1,*}}(\overline{X})$, which is a contradiction.
Thus equation (\ref{eqn:inter_main}) holds.
On the other hand, Lemma~\ref{lem:sparse_rank} along with lower-semicontinuity of the rank function implies that $\overline{X}$ must have at most one nonzero row and at most one nonzero column, as claimed. \qed
\end{proof}

\section{Recovering sparse rank one matrices with the joint norm}\label{sec:rec}
In this section, we will prove that the problem of minimizing $\langle V, X\rangle$ subject to $\Vert X\Vert_1 + \theta \Vert X\Vert_*\le 1$ will
recover a sparse rank-one matrix for a positive measure subset of matrices $V$; see Theorem~\ref{thm:sp_rank}. Indeed, this property is key for the results of Doan and Vavasis \cite{DV13} and Doan, Toh and Vavasis \cite{prox_norm}, who use the joint norm $\| \cdot\|_{1,*}$ to find hidden rank-one blocks inside large matrices. We will elaborate on the significance of this result further at the end of this section. We begin with the following key lemma, which may be of an independent interest. Roughly speaking, it shows that any translate of any open subregion of the smooth part of the boundary of the spectral norm ball generates, by way of the positive hull operation, a region with nonempty interior. The proof requires some elementary differential geometry; see for example~\cite{Lee2}. In particular, we say that a smooth mapping between smooth manifolds is a {\em submersion} at a point if the derivative of the mapping there is surjective.

\begin{lem}[Positive hull of the translated spectral ball]\label{lem: con_spec} \hfill\hfill \\ 
Consider the analytic manifold
$$\mathcal{M}:=\{X \in {\bf M}^{n\times m}: 1=\sigma_1(X)> \sigma_2(X) \geq \ldots\geq \sigma_{n}(X)\},$$
and fix a matrix $Y\in  {\bf M}^{n\times m}$. Then the positive scaling mapping 
\begin{align*}
\Phi\colon \R_{++}\times \mathcal{M} &\to {\bf M}^{n\times m}, \\
(\alpha,X) &\mapsto \alpha (Y+X),
\end{align*}
is a submersion at a pair $(\alpha, X)$ if and only if the condition
$$\langle Yv, u\rangle \neq -1 \qquad\textrm{ holds},$$
where $u$ and $v$ are the left and  right singular vectors of $X$ corresponding to $\sigma_1(X)$, appearing in any singular value decomposition of $X$.
Consequently there exists a dense subset $\mathcal{D}_{Y}$ of $\mathcal{M}$ so that $\Phi$ is a submersion at any point in $\R_{++}\times\mathcal{D}_{Y}$. Therefore  
for any open set $W$ that intersects $\mathcal{M}$, the set $\R_+\big(Y+(\mathcal{M}\cap W)\big)$ has nonempty interior.
\end{lem}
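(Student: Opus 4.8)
The plan is to compute the derivative of $\Phi$ at a pair $(\alpha,X)$ and determine exactly when it fails to be surjective. First I would parametrize the tangent space to $\mathcal{M}$ at $X$. Since $\mathcal{M}$ consists of matrices whose largest singular value equals $1$ and is simple (strictly greater than the rest), the constraint cutting $\mathcal{M}$ out of a neighborhood is $\sigma_1(X)=1$. Because $\sigma_1$ is differentiable at such $X$ with gradient $uv^T$ (the outer product of the leading singular vectors), the tangent space is
$$T_X\mathcal{M} = \{H\in{\bf M}^{n\times m} : \langle uv^T, H\rangle = 0\}.$$
Differentiating $\Phi(\alpha,X)=\alpha(Y+X)$, the image of $D\Phi(\alpha,X)$ applied to $(\beta,H)\in\R\times T_X\mathcal{M}$ is
$$D\Phi(\alpha,X)(\beta,H) = \beta(Y+X) + \alpha H.$$

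\textbf{Characterizing surjectivity.}
Since $\alpha>0$, the set $\{\alpha H : H\in T_X\mathcal{M}\}$ is exactly the hyperplane $T_X\mathcal{M}$, which is the orthogonal complement of $uv^T$. Thus the range of $D\Phi(\alpha,X)$ is the span of $T_X\mathcal{M}$ together with the single vector $Y+X$. This range is all of ${\bf M}^{n\times m}$ if and only if $Y+X\notin T_X\mathcal{M}$, i.e. if and only if $\langle uv^T, Y+X\rangle \neq 0$. I would then compute this inner product: since $\langle uv^T, X\rangle = \sigma_1(X)=1$ and $\langle uv^T, Y\rangle = \langle Yv, u\rangle$, the non-surjectivity condition $\langle uv^T,Y+X\rangle=0$ reads precisely $\langle Yv,u\rangle = -1$. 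Hence $\Phi$ is a submersion at $(\alpha,X)$ exactly when $\langle Yv,u\rangle\neq -1$, which is the stated criterion. (One should note $u,v$ are determined up to a common sign by simplicity of $\sigma_1$, so $uv^T$ and the inner product are well-defined.)

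\textbf{Density of the good set and nonempty interior.}
For the density statement, define $g\colon\mathcal{M}\to\R$ by $g(X):=\langle Yv(X),u(X)\rangle$; the bad set is $g^{-1}(\{-1\})$. Since $\mathcal{M}$ is connected and $g$ is analytic (the leading singular pair varies analytically where $\sigma_1$ is simple), either $g$ is identically $-1$ or the bad set is a proper analytic subvariety, which is closed with empty interior, so its complement $\mathcal{D}_Y$ is open and dense. To rule out $g\equiv -1$, I would exhibit one point of $\mathcal{M}$ where $g\neq -1$; for instance scaling/rotating to make $uv^T$ nearly orthogonal to $Y$ gives $g$ close to $0$. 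Finally, for the last claim, fix any open $W$ meeting $\mathcal{M}$; since $\mathcal{D}_Y$ is dense, $W\cap\mathcal{D}_Y$ is nonempty and open in $\mathcal{M}$, so pick $(\alpha_0,X_0)\in\R_{++}\times(W\cap\mathcal{D}_Y)$ where $\Phi$ is a submersion. By the submersion/open-mapping theorem, $\Phi$ maps a neighborhood of $(\alpha_0,X_0)$ onto a neighborhood of $\Phi(\alpha_0,X_0)$, and this image lies in $\R_+\big(Y+(\mathcal{M}\cap W)\big)$; hence that set contains an open ball and has nonempty interior.

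\textbf{Main obstacle.}
The step I expect to require the most care is justifying the analyticity and smooth dependence of the leading singular pair $(u,v)$ on $X$ over $\mathcal{M}$, so that $g$ is genuinely analytic and the bad set is an analytic subvariety; this relies on the simplicity of $\sigma_1(X)$ throughout $\mathcal{M}$ (guaranteed by the strict inequality $\sigma_1>\sigma_2$ in the definition) together with analytic perturbation theory for simple eigenvalues applied to $X^TX$ or $XX^T$. The derivative computation and the open-mapping argument are then routine.
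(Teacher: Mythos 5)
Your submersion criterion is exactly the paper's argument: both you and the authors compute $\rge D\Phi(\alpha,X)=\spann\{Y+X\}+T_X\mathcal{M}$, identify the normal space $N_X\mathcal{M}=\spann\{uv^T\}$ (you derive it from $\nabla\sigma_1(X)=uv^T$ at a simple leading singular value; the paper cites Lewis--Sendov for the same fact), and reduce surjectivity to $\langle uv^T,\,Y+X\rangle=\langle Yv,u\rangle+1\neq 0$; the concluding open-mapping step is also identical. Where you genuinely diverge is the density of $\mathcal{D}_Y$. The paper's argument is local and elementary: at a bad point $X$ (so $\langle Yv,u\rangle=-1$, forcing $Yv\neq 0$), keep $v$ fixed and replace $X$ by $RX$ for a rotation $R$ arbitrarily close to the identity chosen so that $\langle Yv,Ru\rangle\neq -1$; since $RX\in\mathcal{M}$ and $RX\to X$, bad points are limits of good ones. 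You instead define $g(X)=\langle Yv(X),u(X)\rangle$, invoke analyticity and the identity theorem, and conclude the bad set $g^{-1}(\{-1\})$ is a proper analytic subvariety unless $g\equiv -1$. This route can be made to work, but it is heavier and it shifts the entire burden onto the two points you leave soft: (i) connectedness of $\mathcal{M}$, which you assert rather than prove (it is true for $n,m\geq 2$, e.g.\ because $\mathcal{M}$ fibers over the pairs $(\pm u,\pm v)$ with convex, hence connected, fibers; if $\mathcal{M}$ had several components you would need a good point on each one); and (ii) producing a point where $g\neq -1$, which you dispatch with ``scaling/rotating to make $uv^T$ nearly orthogonal to $Y$'' --- note that scaling is not available, since $\mathcal{M}$ is not scale-invariant, and the rotation argument done carefully (replace $X$ by $RX$ so that $\langle Yv,Ru\rangle$ moves off $-1$, staying on the same component because the rotation group is connected) is precisely the paper's one-line perturbation. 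In short, the analytic machinery buys you nothing: its crux is the very argument the paper uses directly, so you may as well drop the identity-theorem detour and let the rotation argument prove density outright.

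One further remark that applies to both your proof and the paper's: perturbing $u$ by a rotation requires $n\geq 2$ (for $n=1$ one must instead rotate $v$, requiring $m\geq 2$). In the truly scalar case $n=m=1$ the statement is in fact false (take $Y=1$: then $X=-1$ is an isolated bad point of $\mathcal{M}=\{-1,1\}$ and $\R_+\big(Y+\{-1\}\big)=\{0\}$ has empty interior), so some such nondegeneracy is tacitly assumed throughout.
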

\begin{proof}
Define the mapping $\Phi$ as in the statement of the theorem.
A trivial computation shows that for any $\alpha >0$ and $X\in \mathcal{M}$, we have 
$$\rge D\Phi(\alpha,X)=\spann(Y+X) +T_{\mathcal{M}}(X),$$
where $T_{\mathcal{M}}(X)$ denotes the tangent space to $\mathcal{M}$ at $X$.
It is standard that $\mathcal{M}$ has codimension $1$ and the normal space has the form
$$N_{\mathcal{M}}(X)= \spann (uv^{T}),$$ 
where $u$ and $v$ are the left and right singular vectors of $X$ corresponding to $\sigma_1(X)$, appearing in the singular value decomposition $X=U(\Diag\sigma(X))V^T$. This formula immediately follows for example from~\cite[Theorem~7.1]{LHSing}.
Now $\Phi$ is a submersion at $(\alpha, X)$ if and only if we have $Y+X\notin T_{\mathcal{M}}(X)$, or equivalently
$$\langle Y+X, uv^{T}\rangle \neq 0.$$
Expanding, we obtain 
\begin{align*}
0\neq \langle Y+&U(\Diag\sigma(X))V^T, uv^{T}\rangle= \\
&=\langle Yv,u\rangle +\tr \big((\Diag\sigma(X)) U^Tu v^TV\big) = \langle Yv, u\rangle +1. 
\end{align*}
This proves the first assertion of the theorem. Consequently if $\Phi$ is not a submersion at $(\alpha, X)$, keeping $v$ fixed, we may rotate $u$ slightly to a new vector $\hat{u}$ so that $\langle Yv, \hat{u}\rangle\neq -1$. Denote the corresponding rotation matrix by $R$. Then the matrix $\widehat{X}:= RX$ lies in $\mathcal{M}$ close to $X$, while $\Phi$ is indeed a submersion at $(\alpha, \widehat{X})$. 
Thus there exists a dense subset $\mathcal{D}_{Y}$ of $\mathcal{M}$ so that $\Phi$ is a submersion at any point in $\R_{++}\times\mathcal{D}_{Y}$. Consequently applying the open mapping theorem, we conclude that $\Phi$ sends open sets to open sets. The result follows.
\qed
\end{proof}

We now arrive at the main result of this section. In what follows, for any matrix $X\in {\bf M}^{n\times m}$ we define the index set $\supp X:=\{(i,j): X_{i,j}\neq 0\}$.
\begin{thm}[Sparsity and rank one matrices]\label{thm:sp_rank}\hfill\\
Consider a rank one matrix $\overline{X}\in {\bf M}^{n\times m}$. Then after a permutation of rows and columns it has the form 
\begin{equation}\label{eqn: rank1}
\left[ \begin{array}{ccc}
A & {\bf 0}_{p,m-q}  \\
{\bf 0}_{n-p,q} & {\bf 0}_{n-p,m-q} \end{array} \right]
\end{equation}
for some rank one matrix $A\in {\bf M}^{p\times q}$ with all nonzero entries. Consequently, the set $$\mathcal{K}:=\{X: \rank X = 1 ~\textrm{ and }~ \supp X=\supp \overline{X})\}$$
is a $(p +q -1)$-dimensional analytic manifold. Furthermore, there is a set of matrices $V$ of positive measure such that 
the problem 
\begin{equation}\label{eqn: opt_prob}
\min\, \{\langle V,X\rangle : \|X\|_1+\|X\|_*\leq 1\}
\end{equation}
admits a unique minimizer and this minimizer lies in 
$\mathcal{K}$.
\end{thm}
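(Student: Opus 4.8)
The plan is to prove the three claims in sequence, with the measure-theoretic conclusion being the genuine payoff. The structural decomposition \eqref{eqn: rank1} is the easy starting point: since $\overline{X}$ has rank one, we may write $\overline{X}=ab^T$ for nonzero vectors $a\in\R^n$ and $b\in\R^m$; the zero pattern of $\overline{X}$ is then exactly the product of the zero patterns of $a$ and $b$, so after permuting rows to collect the indices with $a_i\neq 0$ and columns to collect those with $b_j\neq 0$, the matrix takes the displayed block form with $A$ the rank one submatrix $a[\mathcal{R}]b[\mathcal{C}]^T$, whose entries are all nonzero by construction. Here $p$ and $q$ are the numbers of nonzero rows and columns.

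Next I would identify $\mathcal{K}$ as a manifold. Parametrizing rank one matrices with a fixed support as $\{Diag(u)\,\overline{X}\,Diag(v): u\in(\R\setminus\{0\})^n,\ v\in(\R\setminus\{0\})^m\}$ places us exactly in the setting of Lemma~\ref{lem:sparse_rank}: $\mathcal{K}$ is the orbit of $\overline{X}$ under the diagonal $GL$-action, hence a $\mathbf{C}^\infty$ (indeed analytic, by the same Lie-group-orbit argument) manifold of dimension $n+m-c(\mathcal{G})$. Since $A$ has all nonzero entries, the bipartite graph on the $p$ nonzero rows and $q$ nonzero columns is connected, while each of the remaining $(n-p)+(m-q)$ zero rows/columns contributes an isolated vertex; thus $c(\mathcal{G})=1+(n-p)+(m-q)$, giving $\dim\mathcal{K}=n+m-c(\mathcal{G})=p+q-1$, as asserted.

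The substantive step is the positive-measure recovery statement, and this is where I would invoke Lemma~\ref{lem: con_spec}. The idea is that a minimizer of the linear functional $\langle V,\cdot\rangle$ over $\B_{1,*}$ lies on $\bd\B_{1,*}$ and is exposed by $-V$; by the gauge/sum-of-norms machinery (Corollary~\ref{cor:face_exp} and Corollary~\ref{cor: exp_g}), controlling which face is exposed amounts to controlling the exposing vectors of the $\|\cdot\|_1$ and $\|\cdot\|_*$ balls simultaneously. The strategy is to show that the set of $V$ for which the (unique) minimizer lands on $\mathcal{K}$ has nonempty interior, hence positive measure. Concretely, normalizing $\overline{X}$ to lie on $\bd\B_{1,*}$, I would express the condition ``$\overline{X}$ is the unique minimizer exposed by $V$'' as membership of $V$ in the interior of the normal cone $N_{\B_{1,*}}(\overline{X})$; uniqueness is then automatic from full-dimensionality of the exposing set. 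To produce such $V$ with positive measure I would apply Lemma~\ref{lem: con_spec} with a suitable translation matrix $Y$ encoding the $\ell_1$-subgradient contribution: the lemma says that $\R_+\bigl(Y+(\mathcal{M}\cap W)\bigr)$ has nonempty interior for any open $W$ meeting the smooth stratum $\mathcal{M}$ of the spectral ball boundary, and this open set of directions is precisely a set of valid exposing vectors. Dualizing through $\B_{1,*}=(\B_1^{o}+\B_*^{o})^{o}$ then transports nonempty interior in exposing-vector space to the desired positive-measure set of $V$.

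The main obstacle is the bookkeeping that links the abstract submersion conclusion of Lemma~\ref{lem: con_spec} to the concrete normal cone $N_{\B_{1,*}}(\overline{X})$, together with the requirement $\sigma_1(X)>\sigma_2(X)$ built into $\mathcal{M}$ — here it is essential that $\overline{X}$ has rank \emph{one}, so that $\sigma_2=0<\sigma_1$ and $\overline{X}$ (suitably scaled) sits in the smooth stratum $\mathcal{M}$ after adding the $\ell_1$-subgradient translate $Y$. The delicate points are choosing $Y$ so that $Y+X$ realizes an element of $\partial\|\cdot\|_{1,*}$ at a rank-one point, and verifying that the submersion (hence open-mapping) property yields exposing vectors whose associated minimizer is unique and stays on $\mathcal{K}$ rather than drifting to a higher-rank or less-sparse face. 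I would handle the uniqueness by arranging $V$ in the genuine interior of the normal cone, which rules out ties, and confine attention to a neighborhood $W$ of $\overline{X}$ small enough that rank and support are preserved, so that the recovered minimizer necessarily lies in $\mathcal{K}$.
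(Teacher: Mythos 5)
Your handling of the first two claims is fine and consistent with the paper's toolkit: the factorization $\overline{X}=ab^T$ gives the block form, and the orbit parametrization of Lemma~\ref{lem:sparse_rank} with the count $c(\mathcal{G})=1+(n-p)+(m-q)$ correctly yields $\dim\mathcal{K}=p+q-1$ (the paper leaves this count implicit). You also correctly identify Lemma~\ref{lem: con_spec}, applied with $Y$ equal to the constant $\ell_1$-gradient (sign) matrix $H=\nabla\|\cdot\|^{p,q}_1(A)$, as the engine of the measure claim. However, there is a genuine gap in your uniqueness mechanism. Twice you propose to secure uniqueness by ``arranging $V$ in the genuine interior of the normal cone'' $N_{\B_{1,*}}(\overline{X})$ at the fixed (normalized) point $\overline{X}$. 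This is impossible except in the trivial case $p=q=1$: by Theorem~\ref{thm:vert} of this very paper, the only points of $\B_{1,*}$ whose normal cone has nonempty interior --- the vertices --- are the matrices with a single nonzero entry equal to $\pm\frac{1}{2}$. For any $\overline{X}$ with $p+q-1\geq 2$, the set of $V$ exposing that one point is lower-dimensional, so no positive-measure set of $V$ can be obtained by anchoring the minimizer at $\overline{X}$. This is not a technicality; it is precisely why the theorem only asserts that the minimizer lies \emph{somewhere} in $\mathcal{K}$, and why Lemma~\ref{lem: con_spec} takes the positive hull over a whole open piece $\mathcal{M}\cap W$ (i.e., over exposing vectors of \emph{varying} points of $\mathcal{K}$) rather than at a single matrix.

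The correct mechanism, used by the paper, decouples uniqueness from membership in $\mathcal{K}$: first, for almost every $V$ problem \eqref{eqn: opt_prob} has a unique minimizer (a standard genericity fact); consequently it suffices to show that the union $\bigcup_{X\in\mathcal{K}} N_{\B_{1,*}}(X)=\bigcup_{X\in\mathcal{K}}\R_+\partial\|\cdot\|_{1,*}(X)$ has positive measure, since intersecting it with the full-measure set of ``uniqueness'' vectors still leaves positive measure, and for such $V$ the unique minimizer is the exposed point of $\mathcal{K}$. Establishing this positive measure is where the work you defer as ``bookkeeping'' is in fact substantive: one needs the block embedding of $\partial\|\cdot\|^{p,q}_{*}(C)$ into $\partial\|\cdot\|_{*}$ of the padded matrix, the identification of $\bigcup_{\rank C=1}\ri\partial\|\cdot\|^{p,q}_{*}(C)$ with the smooth stratum $\mathcal{M}$ of the spectral ball boundary, inner semicontinuity of $N_{\B^{p,q}_s}$ along $\mathcal{M}$ to write the union over $C$ near $A$ as $\mathcal{M}\cap W$ with $W$ open, and the fact that the entries of $V$ indexed by the zero pattern of $\overline{X}$ range over full boxes $[-1,1]$, supplying the dimensions missing from the $p\times q$ block. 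Only after these steps does Lemma~\ref{lem: con_spec} finish the proof. Finally, your ``dualizing through $\B_{1,*}=(\B_1^{o}+\B_*^{o})^{o}$'' step is unnecessary: the lemma already produces an open set of matrices $V$ directly, with no polarity argument needed.
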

\begin{proof}
Observe that $\overline{X}$ can be factored as $\overline{X}=uv^{T}$ for some vectors $u\in\R^n$ and $v\in\R^m$. Consequently if an entry $\overline{X}_{i,j}$ is zero, then either the whole $i$'th row or the whole $j$'th column of $\overline{X}$ is zero. Hence we may permute the rows and columns of $\overline{X}$ so that the resulting matrix has the form \eqref{eqn: rank1}.

We will assume without loss of generality $p \leq q$. It is standard that for almost every matrix $V$, the problem \eqref{eqn: opt_prob} has a unique solution. Consequently it is sufficient to show that the set
$$\bigcup_{X\in \mathcal{K}} N_{\B_{1,*}}(X)= \bigcup_{X\in \mathcal{K}} \R_+\partial \|\cdot\|_{1,*}(X),$$
has nonzero measure. (Equality above follows from say \cite[Corollary~23.7.21]{con_ter}.)
Before we proceed with the rest of the proof, we recall (see for example \cite[Theorem~7.1]{LHSing}) that the subdifferential of the nuclear norm at any matrix $X$ is given by 
\begin{align*}
\partial \|\cdot\|_{*}(X)=\{U(\Diag &w) V^T:~ w\in\partial \|\cdot\|_{1}(\sigma(X)) \textrm{ and }\\
 &X=U(\Diag (\sigma(X))V^T \textrm{ with } U\in {\bf O}^n,  V\in {\bf O}^m \}.
\end{align*}
Consider now a matrix $X$ of the form (\ref{eqn: rank1}) and let $\|\cdot\|^{p,q}_{1}$ and $\|\cdot\|^{p,q}_{*}$ be the restrictions of $\|\cdot\|_1$ and $\|\cdot\|_{*}$ to ${\bf M}^{p,q}$, respectively. We claim that the inclusion 
$$\Big\{\left[ \begin{array}{ccc}
Y & {\bf 0}  \\
{\bf 0} & {\bf 0} \end{array} \right]
: Y\in \partial \|\cdot\|^{p,q}_{*}(A)\Big\}\subset \partial \|\cdot\|_{*}(X),$$
holds. To see this, consider  a matrix $Y\in \partial \|\cdot\|^{p,q}_{*}(A)$. Then there exist matrices $U\in {\bf O}^p$ and $V\in {\bf O}^q$ and a vector $y\in\partial \|\cdot\|^{p.q}_1(\sigma(A))$ satisfying  
$$Y=U(\Diag y) V^T \qquad \textrm{ and } \qquad A=U(\Diag\sigma(A)) V^T.$$
Clearly $\sigma(X)=\sigma(A)\times\{0\}^{n-p}$ and $\{y\}\times\{0\}^{n-p}\in\partial \|\cdot\|_1(\sigma(X))$.
Consequently we deduce 
$$X=\left[ \begin{array}{ccc}
U & {\bf 0}  \\
{\bf 0} & {\bf I} \end{array} \right] 
\left[ \begin{array}{ccc}
 \Diag \sigma(A) & {\bf 0}  \\
{\bf 0}  & {\bf 0} \end{array} \right]\left[ \begin{array}{ccc}
V & {\bf 0}  \\
{\bf 0} & {\bf I} \end{array} \right]^T $$
and therefore $$\left[ \begin{array}{ccc}
Y & {\bf 0}  \\
{\bf 0} & {\bf 0} \end{array} \right]
=\left[ \begin{array}{ccc}
U & {\bf 0}  \\
{\bf 0} & {\bf I} \end{array} \right] 
\left[ \begin{array}{ccc}
 \Diag y & {\bf 0}  \\
{\bf 0}  & {\bf 0} \end{array} \right]\left[ \begin{array}{ccc}
V & {\bf 0}  \\
{\bf 0} & {\bf I} \end{array} \right]^T\in \partial \|\cdot\|_{*}(X),$$
as claimed.

Now fix for the duration of the proof a matrix $X$ of the form (\ref{eqn: rank1}) and the corresponding submatrix $A$. Since all the entries of $A$ are nonzero, there is a neighborhood of $A$ on which $\|\cdot\|^{p,q}_{1}$ is smooth and moreover the gradient $\nabla \|\cdot\|^{p,q}_{1}$ is constant. Denote this neighborhood by $\mathcal{U}$ and define $H:=\nabla \|\cdot\|^{p,q}_{1}(A)$. 
Consider now any matrix 
$D:=\left[ \begin{array}{cc}
C & {\bf 0}  \\
{\bf 0} & {\bf 0} \end{array} \right]$
of the form (\ref{eqn: rank1}) with $C\in\mathcal{U}$.
From the subdifferential sum rule, we now deduce that any matrix of the form.
\begin{align}\label{eqn:mat}
\{Z\in {\bf M}^{n\times m}: \quad &Z\big[[1,p],[1,q]\big]\in H +\partial \|\cdot\|^{p,q}_{*}(C), ~\textrm{and} \\
&|Z_{ij}|\leq 1 \textrm{ for all } (i,j)\notin [1,p]\times [1,q]\}\notag
\end{align}
is contained in $\partial \|\cdot\|_{1,*}(D)$. For ease of notation, we will denote this set of matrices (\ref{eqn:mat}) by 
$$\left[ \begin{array}{ccc}
H +\partial \|\cdot\|^{p,q}_{*}(C) & {\bf \odot}  \\
{\bf \odot} & {\bf \odot} \end{array} \right].$$
Thus it is sufficient to argue that the set 
$$\Gamma:=\bigcup_{C\in \mathcal{U}: ~\scriptsize{\rank} C=1 } \R_+ \left[ \begin{array}{cc}
H +\partial \|\cdot\|^{p,q}_{*}(C) & {\bf \odot}  \\
{\bf \odot} & {\bf \odot} \end{array} \right] $$
has nonempty interior. On the other hand, the equation
\begin{equation*}
\Gamma =\R_+ \Big(~ \left[ \begin{array}{cc}
H & {\bf 0}  \\
{\bf 0} & {\bf 0} \end{array} \right]  + \bigcup_{C\in \mathcal{U}: ~\scriptsize{\rank} C=1 }  \left[ \begin{array}{cc}
\partial \|\cdot\|^{p,q}_{*}(C) & {\bf \odot}  \\
{\bf \odot} & {\bf \odot} \end{array} \right] ~\Big).
 \end{equation*}
 holds. Denote now the spectral ball in ${\bf M}^{p\times q}$ by  $\B^{p,q}_{s}:=[\sigma_1\leq 1]$. Loosely speaking, we now claim that $\bigcup \{ \ri\partial \|\cdot\|_{*}^{p,q}(C): \rank C=1\}$ 
 coincides with the smooth part of the boundary of the spectral ball $\B^{p,q}_{s}$. To see this, we appeal to \cite[Theorem~4.6]{spec_id} and obtain 
\begin{align*}
\mathcal{M}:&=\bigcup_{C:\, \scriptsize{\rank} C=1} \ri \partial \|\cdot\|_{*}^{p,q}(C)\\
&= \sigma^{-1}\Big(\bigcup_{x:\, \scriptsize{\rank} x=1} \ri \partial \|\cdot\|_1(x)\Big)\\
&= \sigma^{-1}\Big(\{x: \|x\|_{\infty}\leq 1 \textrm{ and there exists unique } i \textrm{ with } |x_i|=\|x\|_1\}\Big)\\
&= \{Q \in {\bf M}^{p\times q}: \sigma_1(Q)=1, \sigma_2(Q) <1,\ldots, \sigma_{p}(Q)<1\}.
\end{align*}

Now since the set-valued mapping $(\partial \|\cdot\|_{*}^{p,q})^{-1}=N_{\B^{p,q}_{s}}$ is inner semi-continuous when restricted to $\mathcal{M}$ (see for example \cite[Proposition~3.15]{spec_id}), a routine argument shows that  
$$\bigcup_{C\in \mathcal{U}:\, \scriptsize{\rank} C=1} \ri \partial \|\cdot\|_{*}^{p,q}(C),$$
is an open subset of $\mathcal{M}$. Hence we may write it as $\mathcal{M}\cap W$ for some open subset $W$ of $ {\bf M}^{p\times q}$.
Finally to conclude the proof, it is sufficient to show that 
the set 
\begin{equation*}
\R_+ \Big(~ \left[ \begin{array}{cc}
H & {\bf 0}  \\
{\bf 0} & {\bf 0} \end{array} \right]  +  \left[ \begin{array}{cc}
\mathcal{M}\cap W & {\bf \odot}  \\
{\bf \odot} & {\bf \odot} \end{array} \right] ~\Big)
 \end{equation*}
 has nonempty interior, but this is immediate from Lemma~\ref{lem: con_spec}.
\qed
\end{proof}

\subsection*{Discussion on sparse rank one recovery:}\label{sec:disc}
In Theorem~\ref{thm:sp_rank}, we argued that the problem of minimizing $\langle V, X\rangle$ subject to $\Vert X\Vert_1 + \theta \Vert X\Vert_*\le 1$ will
recover a sparse rank-one matrix for a positive measure subset of matrices $V$.  This result, on the other hand, is not possible with either the 1-norm or nuclear-norm solely.  In other words, minimizing
$\langle V, X \rangle$ subject to $\Vert X\Vert_1 \le 1$ will recover a sparse matrix
for a positive measure subset of matrices $V$, but will recover a
sparse rank-one matrix only for a set of matrices $V$ of measure zero (except in the somewhat trivial case when the solution has a single nonzero row or column).  The same holds for the nuclear norm alone.
Indeed, this property is key for the results of Doan and Vavasis \cite{DV13} and Doan, Toh and Vavasis \cite{prox_norm},  who used the joint norm $\| \cdot\|_{1,*}$ to find hidden rank-one blocks inside large matrices. 

Thus, the sum of the 1-norm and the nuclear norm appears to have greater power to recover sparse rank-one matrices than either norm alone.  This should be contrasted with the results of Oymak et al. \cite{Fazel_neg} who show that for the exact recovery problem given linear measurements, a sum of norms performs no better than the two norms individually.  More precisely, the authors of \cite{Fazel_neg} consider the following problem: given a linear operator $\mathcal{A}\colon {\bf M}^{n,m}\to\R^d$ and a measurement vector $b\in \R^d$, find a sparse low-rank matrix $X$ satisfying $\mathcal{A}(X)=b$.  They argue that the number of measurements (i.e. the value $d$) to guarantee recovery by minimizing $\Vert X\Vert_1 + \theta \Vert X\Vert_*$ subject to $\mathcal{A}(X)=b$ is no better than the number needed when using only one of the norms in the objective function.

It is not clear why our results point in the opposite direction of \cite{Fazel_neg}; possibly the disparity is because \cite{Fazel_neg} focuses on minimizing measurements in the noise-free case, whereas \cite{DV13,prox_norm} assume the entire matrix is known (i.e., the number of measurements is unlimited) but the data is corrupted by noise.  Indeed, at the end of \cite{Fazel_neg}, the authors note that extending their results to noisy sparse Principle Component Analysis would be an interesting direction to pursue.

\begin{acknowledgement} We thank Gabor Pataki for insightful discussions, and in particular for suggesting including Subsection~\ref{subsec:face_gen} and Theorem~\ref{thm:gabor}.

\end{acknowledgement}

\bibliographystyle{plain}

\bibliography{bibliography}

\begin{appendices}
\section{Appendix}
Recall that the minimal face of a convex set $Q$ at $\bar{x}$ is the unique face of $Q$ containing $\bar{x}$ in its relative interior.
A similar characterization holds for minimal exposed faces: any set of the form $\partial\delta^*_Q(v)$ for some vector $v\in\ri N_Q(\bar{x})$ is the minimal exposed face of $Q$ at $\bar{x}$. To be self-contained, we provide an elementary proof. We begin with the following lemma.
\begin{lem}[Exposed faces of the dual cone]\label{lem:key} 
Consider a closed convex cone $K\subset\E$ and a point $\bar{x}$ in $K$. Then the set $F:=N_K(\bar{x})$ is an exposed face of $K^{o}$ and the equality
$$N_F(\bar{v})\cap K = N_{K^{o}}(\bar{v})\quad\textrm{ holds for any } \bar{v}\in F.$$ 
\end{lem}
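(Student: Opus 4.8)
The plan is to collapse everything to two standard cone identities and then verify the asserted equality by a direct two-inclusion argument. First I would record that for a closed convex cone $K$ and a point $\bar{x}\in K$, the normal cone admits the description $N_K(\bar{x})=K^{o}\cap \bar{x}^{\perp}$. This follows straight from the definition of the normal cone: testing the inequality $\langle v,x-\bar{x}\rangle\le 0$ against the points $0\in K$ and $2\bar{x}\in K$ forces $\langle v,\bar{x}\rangle=0$, after which $\langle v,x\rangle=\langle v,x-\bar{x}\rangle\le 0$ for all $x\in K$ shows $v\in K^{o}$; the converse inclusion is immediate. Applying the very same identity to the cone $K^{o}$ and invoking the bipolar theorem $(K^{o})^{o}=K$ yields the companion formula $N_{K^{o}}(\bar{v})=K\cap \bar{v}^{\perp}$ for any $\bar{v}\in K^{o}$. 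With these two formulas in hand the statement becomes concrete: $F=K^{o}\cap\bar{x}^{\perp}$, and the desired equality reads $N_F(\bar{v})\cap K = K\cap\bar{v}^{\perp}$.

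For the first claim, that $F$ is an exposed face of $K^{o}$, I would exhibit $\bar{x}$ itself as the exposing vector. Since $\bar{x}\in K=(K^{o})^{o}$, the linear functional $\langle\bar{x},\cdot\rangle$ is nonpositive on all of $K^{o}$ and attains its maximum value $0$ at the origin. Hence $\argmax\{\langle\bar{x},w\rangle: w\in K^{o}\}=\{w\in K^{o}:\langle\bar{x},w\rangle=0\}=K^{o}\cap\bar{x}^{\perp}=F$, which is exactly the condition that $F$ be exposed in $K^{o}$ with exposing vector $\bar{x}$.

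For the equality I would prove the two inclusions separately, each essentially in one line. For $\supseteq$: given $y\in N_{K^{o}}(\bar{v})=K\cap\bar{v}^{\perp}$, we have $y\in K$, and for every $v\in F\subset K^{o}$ the pairing $\langle y,v-\bar{v}\rangle=\langle y,v\rangle\le 0$, using $y\in K$, $v\in K^{o}$, and $\langle y,\bar{v}\rangle=0$; thus $y\in N_F(\bar{v})\cap K$. For $\subseteq$: given $y\in N_F(\bar{v})\cap K$, the cone structure of $F$ lets me test the inequality $\langle y,v-\bar{v}\rangle\le 0$ at $v=0$ and at $v=2\bar{v}$, both of which lie in $F$; these pin down $\langle y,\bar{v}\rangle=0$, and since $y\in K$ we conclude $y\in K\cap\bar{v}^{\perp}=N_{K^{o}}(\bar{v})$.

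I do not expect a genuine obstacle here; the argument is short. The only points requiring care are the cone normal-cone identity $N_K(\bar{x})=K^{o}\cap\bar{x}^{\perp}$ together with its dual form obtained via the bipolar theorem, since the whole computation rests on them, and the repeated use of the two structural facts that $F\subset K^{o}$ and that $F$, being a normal cone, is itself a closed convex cone (so that $0$ and $2\bar{v}$ are legitimate test points). Everything else reduces to bookkeeping with inner products.
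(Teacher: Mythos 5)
Your proof is correct, but it follows a genuinely different route from the paper's. The paper establishes the inclusion $N_F(\bar{v})\cap K \subset N_{K^{o}}(\bar{v})$ by invoking Moreau's decomposition theorem at $\bar{v}$, writing $w=w_1+w_2$ with $w_1\in N_{K^{o}}(\bar{v})$, $w_2\in T_{K^{o}}(\bar{v})$, $\langle w_1,w_2\rangle=0$, and then running a sequential approximation argument ($\lambda_i(v_i-\bar{v})\to w_2$ with $v_i\in K^o$) to force $w_2=0$; exposedness is obtained by citing the conjugacy relation $N_K(\bar{x})=(N_{K^{o}})^{-1}(\bar{x})$. You instead reduce everything to the elementary identity $N_C(\bar{v})=C^{o}\cap\bar{v}^{\perp}$ for a closed convex cone $C$ (proved by testing at $0$ and $2\bar{v}$) together with the bipolar theorem $(K^o)^o=K$, after which both inclusions are one-line computations and the exposing vector $\bar{x}$ is exhibited explicitly. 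Your route buys considerable transparency: it shows that the hard inclusion needs only $y\in K$ and $\langle y,\bar{v}\rangle=0$, so no decomposition or limiting argument is required, and the entire lemma becomes a bookkeeping exercise with polars. The paper's argument, while heavier, works directly from the abstract normal/tangent cone duality without ever writing the normal cone in the polar-intersection form; but in this finite-dimensional closed-cone setting that generality purchases nothing, and your proof is the shorter and more self-contained of the two.
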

\begin{proof}
It follows immediately from the equality $N_K(\bar{x})=(N_{K^{o}})^{-1}(\bar{x})$ that $F$ is an exposed face of $K^{o}$. Clearly the inclusion $N_{K^{o}}(\bar{v})\subset N_F(\bar{v})\cap K$ holds. Consider now an arbitrary vector $w\in N_F(\bar{v})\cap K$. Then by Moreau's decomposition theorem (see for example \cite{Moreau}), we have the representation
$$w= w_1+w_2, \textrm{ for some } w_1\in N_{K^{o}}(\bar{v}), w_2\in T_{K^{o}}(\bar{v}) \textrm{ with } \langle w_1,w_2\rangle=0.$$
Consequently there exist vectors $v_i\in K^{o}$ and real numbers $\lambda_i >0$ with $\lambda_i(v_i-\bar{v})\to w_2$. Hence given any $\epsilon >0$, for all sufficiently large indices $i$ we have
$$0 \geq \langle w,  \lambda_i\bar{v}+ \lambda_i (v_i-\bar{v})\rangle\geq \lambda_i \langle w,\bar{v}\rangle + \langle w,w_2\rangle -\epsilon= \lambda_i \langle w,\bar{v}\rangle+\|w_2\|^2+\langle w_1,w_2\rangle-\epsilon.$$
Now observe since $F$ is a cone, we have $\langle w,\bar{v}\rangle =0$. Consequently letting $\epsilon$ tend to zero we obtain $w_2=0$, thereby completing the proof. \qed
\end{proof}

\begin{thm}[Minimal exposed faces of convex cones]\label{thm:exp} {\hfill \\}
Consider a closed, convex cone $K\subset\E$ and a point $\bar{x}$ in $K$. Then for any vector $v\in \ri N_K(\bar{x})$, the set $F=\partial \delta^{*}_K(v)$
is a minimal exposed face of $K$ at $\bar{x}$.
\end{thm}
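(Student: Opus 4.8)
The plan is to establish the statement by dualizing and applying Lemma~\ref{lem:key}. The key observation is that exposed faces of $K$ and exposed faces of the dual cone $K^o$ are intimately related: for any vector $v$, the set $\partial\delta^*_K(v)=\argmax\{\langle v,x\rangle : x\in K\}$ is precisely $N_{K^o}(v)$, since $\delta_K^*=\delta_{K^o}$ and hence $\partial\delta^*_K(v)=\partial\delta_{K^o}(v)=N_{K^o}(v)$. So the object $F=\partial\delta^*_K(v)$ is the normal cone to $K^o$ at $v$, and I want to show it is the \emph{minimal} exposed face of $K$ at $\bar{x}$ whenever $v\in\ri N_K(\bar{x})$.

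First I would verify that $\bar{x}\in F$. Since $v\in\ri N_K(\bar{x})\subset N_K(\bar{x})$, we have $\bar{x}$ achieving the maximum $\langle v,\bar{x}\rangle=0$ over $K$ (the value is zero because $K$ is a cone and $v\in K^o$), so indeed $\bar{x}\in\argmax\{\langle v,x\rangle:x\in K\}=F$; thus $F$ is an exposed face containing $\bar{x}$. Next, for minimality, I would take an arbitrary exposed face $F'$ of $K$ containing $\bar{x}$ with exposing vector $w$, so $F'=N_{K^o}(w)$ and $w\in N_K(\bar{x})$ (the latter because $\bar{x}\in F'$ forces $\langle w,\bar{x}\rangle=0$ with $w\in K^o$, i.e. $w\in N_K(\bar{x})$). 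The goal becomes the inclusion $F\subset F'$, that is $N_{K^o}(v)\subset N_{K^o}(w)$.

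The main obstacle, and the crux of the argument, is to leverage the relative-interior hypothesis $v\in\ri N_K(\bar{x})$ to push the containment through. Here I would apply Lemma~\ref{lem:key} with the cone $K$ and point $\bar{x}$: setting $\widetilde{F}:=N_K(\bar{x})$, which is an exposed face of $K^o$, the lemma gives $N_{\widetilde{F}}(\bar{v})\cap K = N_{K^o}(\bar{v})$ for every $\bar{v}\in\widetilde{F}$. The plan is to exploit the fact that a normal cone of $\widetilde{F}$ taken at a relative interior point $v$ is the smallest among normal cones at points of $\widetilde{F}$; concretely, since $w\in\widetilde{F}=N_K(\bar{x})$ and $v\in\ri\widetilde{F}$, we have $N_{\widetilde{F}}(v)\subset N_{\widetilde{F}}(w)$ by the standard monotonicity of normal cones toward relative interior points. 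Intersecting with $K$ and invoking Lemma~\ref{lem:key} twice then yields
\[
N_{K^o}(v)=N_{\widetilde{F}}(v)\cap K\subset N_{\widetilde{F}}(w)\cap K = N_{K^o}(w),
\]
which is exactly $F\subset F'$. This completes the minimality. The delicate point to get right is the monotonicity step $N_{\widetilde{F}}(v)\subset N_{\widetilde{F}}(w)$ for $v$ in the relative interior, which I would justify from the elementary characterization that the minimal face of a convex set at $v$ is the unique face containing $v$ in its relative interior, so a relative-interior point has the largest tangent cone and hence the smallest normal cone.
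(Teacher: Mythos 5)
Your proof is correct and takes essentially the same route as the paper: the paper's argument is precisely the chain $(N_K)^{-1}(v)=N_{N_K(\bar x)}(v)\cap K\subset N_{N_K(\bar x)}(w)\cap K=(N_K)^{-1}(w)$ for $w\in N_K(\bar x)$, obtained by applying Lemma~\ref{lem:key} twice together with the monotonicity of normal cones at relative interior points. Your write-up merely makes explicit the steps the paper compresses into ``the result follows,'' namely that $\bar x\in F$ and that every exposed face of $K$ containing $\bar x$ arises as $N_{K^o}(w)$ with $w\in N_K(\bar x)$.
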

\begin{proof}
Consider vectors $v\in\ri N_{K}(\bar{x})$ and $w\in N_{K}(\bar{x})$. Using Lemma~\ref{lem:key}, we obtain
$$(N_K)^{-1}(w)=N_{K^{o}}(w)=N_{N_{K}(\bar{x})}(w)\cap K\supset N_{N_{K}(\bar{x})}(v)\cap K=(N_K)^{-1}(v),$$
and the result follows.\qed
\end{proof}

The theorem above can easily be extended to convex sets by homogenizing; see Corollary~\ref{cor:rep_conv_set}. 
It will be particularly useful for us to understand the exposed faces of the gauge function. The proof of the following proposition is standard; we provide details for the sake of completeness.

\begin{prop}[Exposed faces of the gauge]\label{prop:gauge} {\hfill \\}
Consider a closed, convex set $Q\subset\E$ containing the origin in its interior, and let $\gamma_Q\colon\E\to\R$ be the gauge of $Q$. Then the following are true.
\begin{enumerate}
\item If $F$ is an exposed face of $Q$ with exposing vector $v$, then $\cl \cone F$ is an exposed face of $\gamma_Q$ with exposing vector $\frac{v}{\langle v,x\rangle}$, where $x$ is any point of $F$. 
\item If $F$ is an exposed face of $\gamma_Q$ with exposing vector $v\neq 0$, then $F\cap (\bd Q)$ is an exposed face of $Q$ with exposing vector $v$. Moreover $F$ then has the representation $F=\cl\cone (F\cap (\bd Q))$.
\end{enumerate}
Similarly the following are true.
\begin{enumerate}
\item[3] If $F$ is a minimal exposed face of $Q$ at $\bar{x}$, then $\cl \cone F$ is a minimal exposed face of $\gamma_Q$ at $\bar{x}$. 
\item[4] If $F$ is a minimal exposed face of $\gamma_Q$ at $\bar{x}$, so that the intersection $F\cap (Q^{\infty})^c$ is nonempty, then $F\cap (\bd Q)$ is a minimal exposed face of $Q$ at $\bar{x}$.
\end{enumerate}
Moreover, for any point $x\in \bd Q$ and nonzero vector $v\in\E$, the equivalence
\begin{equation}\label{eqn:equiv}
v\in N_Q(x) \quad \Longleftrightarrow \quad \frac{v}{\langle v,x \rangle}\in\partial \gamma_Q(x)\qquad \textrm{holds}
\end{equation}
\end{prop}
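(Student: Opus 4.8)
The plan is to reduce the whole proposition to two facts about the gauge and then read them in both directions. The first fact is the conjugate formula $\gamma_Q^{*}=\delta_{Q^{o}}$, which is immediate from $\langle v,x\rangle\le\gamma_Q(x)$ for all $x$ exactly when $v\in Q^{o}$. The second, obtained from positive homogeneity of $\gamma_Q$ (Euler's relation for subgradients) together with $0\in\inter Q$, is that at a boundary point $x$ (where $\gamma_Q(x)=1$) the subdifferential is the cross-section of the normal cone, $\partial\gamma_Q(x)=\{u\in Q^{o}:\langle u,x\rangle=1\}$. With these in hand I would first settle the concluding equivalence. Forward: if $v\in N_Q(x)$ is nonzero, testing against $0\in\inter Q$ forces $\langle v,x\rangle>0$, and testing against all $y\in Q$ shows $u:=v/\langle v,x\rangle\in Q^{o}$ with $\langle u,x\rangle=1$, i.e.\ $u\in\partial\gamma_Q(x)$. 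Reverse: if $u:=v/\langle v,x\rangle\in\partial\gamma_Q(x)$ then $u\in Q^{o}$ and $\langle u,x\rangle=1$ give $u\in N_Q(x)$, so $v=\langle v,x\rangle u\in N_Q(x)$. I would note that the reverse direction needs $\langle v,x\rangle>0$; this positivity is automatic whenever $v$ arises as a genuine nonzero normal vector, and is the natural normalization used throughout.

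For claims 1 and 2 the key is to match a cone through the origin on the gauge side with a face on $\bd Q$. Given an exposed face $F$ of $Q$ with exposing vector $v$, I would first observe $\langle v,\cdot\rangle\equiv h_Q(v)$ on $F$, so $w:=v/\langle v,x\rangle=v/h_Q(v)$ is independent of the chosen $x\in F$ and lies in $Q^{o}$. Since $w\in Q^{o}$, the formula $\gamma_Q^{*}=\delta_{Q^{o}}$ gives $\min_z\bigl(\gamma_Q(z)-\langle w,z\rangle\bigr)=0$, so the exposed face of $\gamma_Q$ exposed by $w$ is the cone $\{z:\langle w,z\rangle=\gamma_Q(z)\}$. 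I would then identify this cone with $\cl\cone F$: any nonzero $z$ in it has $z/\gamma_Q(z)\in F\cap\bd Q$ (it lies on $\bd Q$ and maximizes $\langle v,\cdot\rangle$ over $Q$), giving containment in $\cone F\cup\{0\}$, while $\cl\cone F$ lies in the closed face by homogeneity. This is claim 1, and reading the same computation backwards yields claim 2: an exposed face $F=\{z:\langle v,z\rangle=\gamma_Q(z)\}$ of $\gamma_Q$ with $v\ne0$ forces $v\in Q^{o}$, and in the relevant case $h_Q(v)=1$ one reads off $F\cap\bd Q=\argmax\{\langle v,y\rangle:y\in Q\}$ together with $F=\cl\cone(F\cap\bd Q)$.

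For the minimal-face statements (claims 3 and 4) I would invoke the dual descriptions already available: the minimal exposed face of $Q$ at $\bar x$ is $\partial\delta_Q^{*}(v)$ for any $v\in\ri N_Q(\bar x)$ (Corollary~\ref{cor:rep_conv_set}), and the minimal exposed face of $\gamma_Q$ at $\bar x$ is $\partial\gamma_Q^{*}(w)$ for any $w\in\ri\partial\gamma_Q(\bar x)$ (Theorem~\ref{thm:min_func_rep}). The bridge is that $\partial\gamma_Q(\bar x)$ is precisely the cross-section of the cone $N_Q(\bar x)$ at level $\langle\cdot,\bar x\rangle=1$, so the map $v\mapsto v/\langle v,\bar x\rangle$ carries $\ri N_Q(\bar x)$ bijectively onto $\ri\partial\gamma_Q(\bar x)$. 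Thus for claim 3 I choose $v\in\ri N_Q(\bar x)$, set $w=v/\langle v,\bar x\rangle\in\ri\partial\gamma_Q(\bar x)$, and apply claim 1 to recognize $\cl\cone F$ (with $F=\partial\delta_Q^{*}(v)$) as the face exposed by $w$, which by construction is minimal at $\bar x$. Claim 4 runs the correspondence in reverse, the hypothesis $F\cap(Q^{\infty})^{c}\ne\emptyset$ being used exactly to force $w\in\bd Q^{o}$, so that $F$ meets $\bd Q$ and claim 2 delivers $F\cap\bd Q=\partial\delta_Q^{*}(w)$ as the minimal exposed face of $Q$ at $\bar x$.

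The hard part is not any single inequality but the bookkeeping of the boundary and recession normalizations that make the ``cone through the origin'' picture on the gauge side agree with the ``face on $\bd Q$'' picture on the set side. Concretely, one must rule out the degenerate exposing vectors $v\in\inter Q^{o}$, for which the exposed face of $\gamma_Q$ collapses to $\{0\}$ and $F\cap\bd Q$ is empty, and one must check that passing to relative interiors commutes with the cone-to-cross-section correspondence; the hypothesis $F\cap(Q^{\infty})^{c}\ne\emptyset$ in claim 4 is exactly what keeps the exposing direction on $\bd Q^{o}$. The sign requirement $\langle v,x\rangle>0$ in the reverse equivalence is the scalar shadow of the same phenomenon and should be recorded explicitly rather than glossed over.
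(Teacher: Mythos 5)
Your route for claims 1 and 2 (via $\gamma_Q^{*}=\delta_{Q^{o}}$ and the cross-section formula $\partial\gamma_Q(x)=\{u\in Q^{o}:\langle u,x\rangle=1\}$ at boundary points) is genuinely different from the paper's, which works with supporting hyperplanes of the epigraph $\epi\gamma_Q=\cl\cone(Q\times\{1\})$; and your explicit treatment of the sign condition $\langle v,x\rangle>0$ in the final equivalence is in fact more careful than the paper's. However, there is a genuine gap in your claim 1: you identify the exposed face $\{z:\langle w,z\rangle=\gamma_Q(z)\}$ with $\cl\cone F$ by asserting that every nonzero $z$ in the face satisfies $z/\gamma_Q(z)\in F\cap\bd Q$, ``giving containment in $\cone F\cup\{0\}$.'' This fails when $Q$ is unbounded, a case squarely within the scope of the proposition (claim 4 explicitly involves $Q^{\infty}$): a nonzero $z\in Q^{\infty}$ with $\langle w,z\rangle=0$ lies in the face but has $\gamma_Q(z)=0$, so it cannot be normalized, and it need not lie in $\cone F\cup\{0\}$ at all. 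Concretely, take $Q=\{(a,b):a\le 1\}\subset\R^2$ and $v=(1,0)$: then $F=\{(1,b):b\in\R\}$, the exposed face of $\gamma_Q$ is $\{(a,b):a\ge 0\}$, while $\cone F=\{(a,b):a>0\}\cup\{(0,0)\}$, and the recession direction $(0,1)$ witnesses the failure. The paper closes exactly this gap with a separate argument: for such $z$, pick $y\in F$, note $y+\lambda z\in F$ for all $\lambda\ge 0$, and let $\lambda\to\infty$ in $\frac{1}{\lambda}(y+\lambda z)\to z$ to conclude $z\in\cl\cone F$. The same repair is needed for the representation $F=\cl\cone(F\cap\bd Q)$ in claim 2; your closing remark about ``recession bookkeeping'' names the difficulty but does not supply this argument.

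The second, structural, problem is your proof of claims 3 and 4: you invoke Corollary~\ref{cor:rep_conv_set} and Theorem~\ref{thm:min_func_rep}, but in this paper both are downstream of the very proposition you are proving. The proof of Corollary~\ref{cor:rep_conv_set} cites Proposition~\ref{prop:gauge} directly, and Theorem~\ref{thm:min_func_rep} is deduced from Corollary~\ref{cor:rep_conv_set} together with Theorem~\ref{thm:epi_coh}, whose proof also passes through Corollary~\ref{cor:rep_conv_set}. So as written your argument for the minimality claims is circular; the only results available upstream are Lemma~\ref{lem:key} and Theorem~\ref{thm:exp} (the cone versions), and you would need to rebuild the set-level and function-level minimal-face representations from those before using them. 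The paper avoids this entirely by deriving claims 3 and 4 from claims 1 and 2 alone: any exposed face $M$ of $\gamma_Q$ containing $\bar{x}$ has, by claim 2, the form $\cl\cone F'$ for an exposed face $F'$ of $Q$ containing $\bar{x}$, so minimality of $F$ among such $F'$ yields $\cl\cone F\subset M$. Your cone-to-cross-section correspondence between $\ri N_Q(\bar{x})$ and $\ri\partial\gamma_Q(\bar{x})$ is a nice observation, but it belongs to the proof of Corollary~\ref{cor:rep_conv_set}, not to a proof that may assume it.
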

\begin{proof}
By \cite[Corollary~9.7.1]{con_ter}, we have
$$Q=\{x: \gamma_Q(x)\leq 1\}, \quad\bd Q=\{x: \gamma_Q(x)= 1\}, \quad Q^{\infty}=\{x: \gamma_Q(x)=0\},$$
where $Q^{\infty}$ is the recession cone of $Q$.
We first prove $1$. To this end, suppose that $F$ is an exposed face of $Q$ with an exposing vector $v$. Let $\bar{x}$ be an arbitrary point of $F$ and define $\beta:= \langle v,\bar{x}\rangle$. Then the inequality
$\langle v,x\rangle\leq \beta$ holds for all $x\in Q$ and we have $F=\{x\in Q: \langle v,x\rangle = \beta\}$. Since $Q$ contains $0$ in its interior, we deduce $\beta \neq 0$.
Define now the hyperplane $$H:=\{(x,\alpha)\in\E\times\R : \langle (v,-\beta), (x,\alpha) \rangle = 0\}.$$ We claim that it supports $\epi \gamma_Q$.
To see this, simply observe that for any vector $x\in Q$, we clearly have   
$\langle (v,-\beta), (x,1) \rangle\leq 0$. We deduce that $Q\times\{1\}$ lies on one side of $H$ and consequently so does $\epi\gamma_Q=\cl\cone ((0,0),Q\times\{1\})$.

Now consider a point $(x,\alpha)\in H\cap \epi \gamma_Q$. Since $\gamma_Q$ is continuous, we deduce $\alpha=\gamma_Q(x)$. Suppose first $\alpha\neq 0$. Then we have equality $\langle v,\alpha^{-1}x \rangle = \beta$. Consequently $x$ lies in  $\cone F$. Suppose on the other hand $\alpha=0$, that is $x\in Q^{\infty}$. Then it is easy to see that equality $\langle v,x \rangle = 0$ holds. Choose an arbitrary point $y\in F$. Observe $y+\lambda x$ lies in $F$ for all $\lambda \geq 0$. Hence $\frac{1}{\lambda}(y+ \lambda x)$ lies in $\cone F$ and converges to $x$ as we let $\lambda$  tend to $\infty$. We deduce $x \in \cl \cone F$.
Conversely, suppose $x$ lies in $\cone F$. Observe that $\R_{+}\{x\}$ intersects $\bd Q$ in a unique point. It then easily follows $\gamma_Q(x)\neq 0$ and consequently that $\frac{x}{\gamma_Q(x)}$ lies in $F$. We deduce $\langle v, \frac{x}{\gamma_Q(x)}\rangle=\beta$  and therefore $\langle (v,-\beta), (x,\gamma_Q(x)) \rangle = 0$. Hence $\cone (F\times \{1\})$ is contained in $H\cap \epi \gamma_Q$. Taking closure, we obtain $\cl \cone (F\times \{1\})\subset H\cap \epi \gamma_Q$. We conclude that $\cl\cone F$ is an exposed face of $\gamma_Q$ with exposing vector $\beta^{-1}v$, as claimed.

We now prove $2$. To this end, suppose that $F$ is an exposed face of $\gamma_Q$ with an exposing vector $v\neq 0$. 
Then $L:=\gph \gamma_Q\big|_F$ is an exposed face of $\epi \gamma_Q$
with exposing vector $(v,-1)$. 
Consequently the inequality
$\langle (v, -1), (x,\alpha)\rangle\leq 0$  holds for all $(x,\alpha)\in \epi \gamma_Q$ and we have $L= \{(x,\alpha)\in\epi \gamma_Q :\langle (v, -1), (x,\alpha)\rangle=0\}$. 
Define the hyperplane $H:=\{x: \langle v,x\rangle = 1\}$. It easily follows that $H$ is a supporting hyperplane of $Q$ and we have $H\cap Q = \{x: (x,1)\in L\}= F\cap \bd Q$. Thus $F\cap \bd Q$ is an exposed face of $\gamma_Q$ with an exposing vector $v$. Applying claim $1$ now to $F\cap \bd Q$, we deduce
$M=\cl\cone (M\cap (\bd Q))$.

To see 3, suppose that $F$ is a minimal exposed face of $Q$ at $\bar{x}$. Then by claim $1$, the set $\cl \cone F$ is an exposed face of $\gamma_Q$ containing $\bar{x}$. Consider now any exposed face $M$ of $\gamma_Q$ containing $\bar{x}$. Then $M$ necessarily has the form $\cl\cone F'$ where $F'$ is an exposed face of $Q$.
Clearly we have $\bar{x}\in F'$ and hence $F\subset F'$. The claim follows. Proof of $4$ is similar. Equivalence (\ref{eqn:equiv}) follows easily from the proofs of $1$ and $2$.\qed
\end{proof}

\begin{cor}[Minimal exposed faces of convex sets]\label{cor:rep_conv_set} \hfill \\
Consider a closed convex set $Q\subset\E$ and a point $\bar{x}$ in $Q$. Then for any vector $v\in \ri N_Q(\bar{x})$, the set $F=\partial \delta^{*}_Q(v)$
is a minimal exposed face at $\bar{x}$.
\end{cor}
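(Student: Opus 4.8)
The plan is to homogenize, reducing the statement to the already-proved cone version (Theorem~\ref{thm:exp}). First I would pass to the Euclidean space $\E\times\R$ and form the closed convex cone
$$K:=\cl\cone(Q\times\{1\}),$$
which slices back to $Q$ in the sense that $K\cap(\E\times\{1\})=Q\times\{1\}$. The point $\bar{x}$ corresponds to $(\bar{x},1)\in K$, and the whole argument consists of transporting the normal-cone data, and the resulting minimal exposed face, between $K$ at $(\bar{x},1)$ and $Q$ at $\bar{x}$, slicing at the level $s=1$. Note first that $v\in\ri N_Q(\bar{x})\subset N_Q(\bar{x})$ forces $\langle v,\bar{x}\rangle=h_Q(v)$, so $\bar{x}\in F=\partial\delta^*_Q(v)$ and $F$ is genuinely an exposed face of $Q$ through $\bar{x}$; the content is minimality.

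The key computation is the normal cone to $K$ at $(\bar{x},1)$. Using that $K$ is a cone, one has $N_K(\bar{x},1)=\{w\in K^o:\langle w,(\bar{x},1)\rangle=0\}$, and since $K^o$ is the polar of $\cone(Q\times\{1\})$ it equals $\{(v,\beta):h_Q(v)\le-\beta\}$. Combining these, I would show
$$N_K(\bar{x},1)=\{(v,-\langle v,\bar{x}\rangle):v\in N_Q(\bar{x})\},$$
the point being that the constraint $\langle v,\bar{x}\rangle+\beta=0$ together with $h_Q(v)\le-\beta$ forces $\langle v,\bar{x}\rangle=h_Q(v)$, i.e. $v\in N_Q(\bar{x})$. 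Thus the linear injection $\iota\colon v\mapsto(v,-\langle v,\bar{x}\rangle)$ carries $N_Q(\bar{x})$ bijectively onto $N_K(\bar{x},1)$; being a linear isomorphism onto its image, it preserves relative interiors, so $v\in\ri N_Q(\bar{x})$ exactly when $\iota(v)\in\ri N_K(\bar{x},1)$.

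With this in hand, fix $v\in\ri N_Q(\bar{x})$ and set $(v,\beta)=\iota(v)$. By Theorem~\ref{thm:exp} the set $G:=\partial\delta^*_K(v,\beta)$ is the minimal exposed face of $K$ at $(\bar{x},1)$; since $(v,\beta)\in K^o$, we have $G=\{(y,s)\in K:\langle v,y\rangle+\beta s=0\}$, and substituting $\beta=-h_Q(v)$ and slicing at $s=1$ gives $G\cap(\E\times\{1\})=F\times\{1\}$ with $F=\partial\delta^*_Q(v)$. It remains to transfer minimality, which I would do directly rather than through an abstract correspondence: any exposed face $F'$ of $Q$ through $\bar{x}$ has an exposing vector $w\in N_Q(\bar{x})$, the vector $\iota(w)$ lies in $N_K(\bar{x},1)$ and exposes a face $G'$ of $K$ containing $(\bar{x},1)$, so minimality of $G$ yields $G\subset G'$, and slicing at $s=1$ returns $F\subset F'$. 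Hence $F$ is contained in every exposed face of $Q$ through $\bar{x}$ and is therefore the minimal one. The main obstacle is purely bookkeeping: verifying the polar/normal-cone identity for $K$ and checking that each slicing step respects both the exposing vectors and the containment relation; once $N_K(\bar{x},1)=\iota(N_Q(\bar{x}))$ is pinned down, everything else follows mechanically from Theorem~\ref{thm:exp}.
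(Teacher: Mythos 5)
Your proof is correct, and its skeleton is the same as the paper's: homogenize $Q$ to the closed convex cone $K=\cl\cone(Q\times\{1\})$, apply the cone result (Theorem~\ref{thm:exp}) at the point $(\bar{x},1)$, and slice back at height one. The difference is in how the normal-cone data and the minimality are transported. The paper normalizes $0\in\inter Q$, introduces the gauge $\gamma_Q$ (whose epigraph is exactly your $K$), and invokes Proposition~\ref{prop:gauge} both to send $v\in\ri N_Q(\bar{x})$ to $(v,-1)\in\ri N_{\sepi\gamma_Q}(\bar{x},1)$ and to carry minimal exposed faces back and forth. You instead do this transport by hand: you compute $K^{o}=\{(v,\beta):h_Q(v)\le-\beta\}$, deduce $N_K(\bar{x},1)=\iota\big(N_Q(\bar{x})\big)$ for the linear injection $\iota(v)=(v,-\langle v,\bar{x}\rangle)$, use that a linear isomorphism onto its image preserves relative interiors, and then re-derive the minimality transfer by matching each exposed face of $Q$ through $\bar{x}$ with an exposed face of $K$ through $(\bar{x},1)$ and slicing. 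What your version buys: it is self-contained (no appeal to Proposition~\ref{prop:gauge}) and it dispenses with the normalization $0\in\inter Q$, which in the paper is harmless only when $Q$ is full-dimensional (a lower-dimensional $Q$ has no translate with the origin in its interior, so the paper's gauge setup must implicitly be read relative to $\aff Q$). The cost is symmetric: you redo, in explicit polar-cone bookkeeping, exactly the transport the paper gets for free from its gauge proposition, so your proof is longer but slightly more general and more elementary.
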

\begin{proof}
Suppose without loss of generality $0\in\inter Q$ and let $\gamma_Q\colon \E\to\R$ be the gauge of $Q$. Fix a vector $v\in \ri N_Q(\bar{x})$ and observe by Proposition~\ref{prop:gauge}, we have $(v,-1)\in\ri N_{\sepi \gamma_Q}(\bar{x},1)$. 
It follows that $(N_{\sepi \gamma_Q})^{-1}(v,-1)=\cl\cone (F\times \{1\})$ is a minimal exposed face of $\epi \gamma_Q$ at $(\bar{x},-1)$ and consequently $F$ is a minimal exposed face of $Q$ at $\bar{x}$.\qed
\end{proof}

\begin{proof}{\em of Theorem~\ref{thm:epi_coh}}: Claim 1 is obvious. To see 2, let $F$ be a minimal exposed face of $f$ at $\bar{x}$. Then $\gph f\big|_F$ is an exposed face of $\epi f$. Choose a vector $(v,-\beta)\in \ri N_{\sepi f}(\bar{x},f(\bar{x}))$. Clearly then the inequality $\beta >0$ holds and we deduce using Corollary~\ref{cor:rep_conv_set} that the vector $(\frac{v}{\beta},-1)$  
exposes a minimal exposed face $M$ of $\epi f$ at $(\bar{x},f(\bar{x}))$. On the other hand, since $F$ is minimal exposed face of $f$ at $\bar{x}$, We immediately obtain the inclusion $\gph f\big|_F\subset M$, thereby establishing the validity of $2$. \qed
\end{proof}

\begin{proof}{\em of Corollary~\ref{thm:min_func_rep}}:
This is immediate from Corollary~\ref{cor:rep_conv_set} and Theorem~\ref{thm:epi_coh}.\qed
\end{proof}

\end{appendices}

%
%

\end{document}